\newtheorem{thm}{Theorem}[section]
\newtheorem{cor}[thm]{Corollary}
\newtheorem{lem}[thm]{Lemma}
\newtheorem{prop}[thm]{Proposition}
\theoremstyle{definition}
\newtheorem{defn}[thm]{Definition}
\newtheorem{ass}[thm]{Assumption}
\theoremstyle{remark}
\newtheorem{rem}[thm]{Remark}
\newtheorem{exa}[thm]{Example}
\newcommand{\norm}[1]{\big\|#1\big\|}
\newcommand{\Real}{\mathbb R}
\newcommand{\F}{\mathcal{F}}
\newcommand{\prob}{\mathbb{P}}
\newcommand{\expec}{\mathbb{E}}
\newcommand{\Bu}{\boldsymbol u}
\newcommand{\esssup}{\text{esssup}}
\newcommand{\Rt}{[0,1]\times \Real^J}
\newcommand{\tsum}{\textstyle \sum}
\newcommand{\tint}{\textstyle \int}
\newcommand{\tiint}{\textstyle \iint}
\newcommand{\lec}{\leq_C}
\newcommand{\len}{\leq_N}
\newcommand{\E}{\mathbb{E}}
\newcommand{\Id}{\mathrm{d}}
\newcommand{\R}{\mathbb{R}}
\newcommand{\N}{\mathbb{N}}
\def\Xint#1{\mathchoice
{\XXint\displaystyle\textstyle{#1}}%
{\XXint\textstyle\scriptstyle{#1}}%
{\XXint\scriptstyle\scriptscriptstyle{#1}}%
{\XXint\scriptscriptstyle\scriptscriptstyle{#1}}%
\!\int}
\def\XXint#1#2#3{{\setbox0=\hbox{$#1{#2#3}{\int}$ }
\vcenter{\hbox{$#2#3$ }}\kern-.6\wd0}}
\def\dashint{\Xint-}
\begin{document}
\date{\today}

\title[Radner equilibrium and quadratic BSDE systems]{Radner equilibrium and systems of quadratic BSDEs with discontinuous generators}\thanks{We are grateful for Johannes Muhle-Karbe for helpful comments on the paper.}

\author[]{Luis Escauriaza}
\address{Departamento de Matem\'{a}ticas, UPV/EHU, Barrio de Sarriena s/n, 48940 Leioa, Spain}
\email{luis.escauriaza@ehu.eus}
\thanks{L. Escauriaza is supported by Basque Government grant IT1247-19 and MICINN grant PGC2018-094522-B-I00.}

\author{Daniel C. Schwarz}
\address{Mathematics Department, University College London, 25 Gordon Street, London WC1H 0AY, UK}
\email{d.schwarz@ucl.ac.uk}

\author{Hao Xing}
\address{Department of Finance, Questrom School of Business, Boston University, 595 Commonwealth Ave, Boston MA 02215, USA}
\email{haoxing@bu.edu}

\keywords{Radner equilibrium, incomplete market, backward stochastic differential equation, discontinuous generator, backward uniqueness}

\begin{abstract}
Motivated by an equilibrium problem, we establish the existence of a solution for a family of Markovian backward stochastic differential equations with quadratic nonlinearity and discontinuity in $Z$.  Using unique continuation and backward uniqueness, we show that the set of discontinuity has measure zero. In a continuous-time stochastic model of an endowment economy, we  prove the existence of  an incomplete Radner equilibrium with  nondegenerate endogenous volatility.
\end{abstract}

\maketitle

\section{Introduction}

\noindent{\bf The equilibrium problem.} Equilibrium is a fundamental concept in economics. It determines asset prices in markets so that supply and demand are matched when agents trade optimally. A milestone in the development of the theory was Radner's \cite{Rad82} extension of the classical framework of Arrow-Debreu. This extension incorporates market incompleteness in equilibrium models. The present paper focuses on an important source of market incompleteness: the number of sources of randomness in the economic environment is larger than the number of  risky assets, so that agents cannot fully hedge the risk they face by trading in the market. In a continuous-time stochastic model of an endowment economy, we prove the existence of an incomplete Radner equilibrium  whose asset price volatility is determined endogenously. 

 Study on general equilibrium with incomplete markets (GEI) has a long tradition in the economics literature. { The incomplete market structure is valued in the economic literature as an implicit model for the consequences of bounded rationality and the opportunistic behaviour of agents (cf. \cite[p.30]{MM-book}).} 
 For discrete time GEI models with finite sample space, we refer reader to the survey \cite{G90} and the textbook \cite{MM-book}. In particular, Duffie and Shafer show in \cite{DS85, DS86}  that equilibrium exists for generic endowments. For continuous time models, Anderson and Raimondo emphasize in \cite{AR} that ``with dynamic incompleteness, essentially nothing is known". In recent years, some progress has been made by the mathematical finance community, cf. \cite{Zit12, Lar12, Zha12, Lar14, ChoLar14, Kardaras-Xing-Zitkovic, Lar16, Jarrow, Weston20}. Nevertheless, general existence results for equilibrium in endowment economy, similar to those considered by the aforementioned economists, still remain missing.

We consider a model of a financial market consisting of one riskless asset (bond) in zero net supply and one risky asset (stock) in unit supply. The stock pays a random dividend at the time horizon, normalized to be $1$.\footnote{This normalization simplifies the notation. All of our results hold for any finite horizon $T\geq 0$.} A finite population of CARA agents trade both assets in order to maximize their expected utility of terminal consumption.\footnote{{ The benefit of CARA utility is highlighted in the aforementioned literature on dynamic incomplete equilibria. The wealth-independence of the agent's portfolio choice enables the characterization of equilibria via systems of BSDEs or quasilinear PDEs.}} Agents consume the profit or loss that results from their dynamic trading and the random endowments they receive at the time horizon. One example of a random endowment is  an agent's labor income, which cannot be fully insured by dynamic trading due to moral hazard considerations. Therefore we allow more than one source of randomness to define the random endowments of the agents and the stock dividend. When CARA agents consume intertemporally, equilibrium problems of a similar type were considered in \cite{Lar12}, \cite{Lar14}, and \cite{Lar16}, where the existence of equilibrium is established under a linear structure for the stock dividend and the random endowment. Whether equilibrium exists under general conditions still remains an open problem, which will be addressed in the present paper. 

\medskip

\noindent{\bf Backward stochastic differential equations.} Since the seminal paper  \cite{Pardoux-Peng}, backward stochastic differential equations (BSDEs) have been a subject of extensive study. Given a time horizon (normalized to be $1$) and a filtered probability space $(\Omega, \mathcal{F}, (\mathcal{F}_t)_{t\in [0,1]}, \prob)$ satisfying the usual conditions, a BSDE is an equation of the form
\begin{equation}\label{BSDE-intro}
 \boldsymbol Y_t = \boldsymbol G + \int_t^1 \boldsymbol f (s, \boldsymbol Y_s, \boldsymbol Z_s)\, \Id s - \int_t^1 \boldsymbol Z_s \, \Id W_s, \quad t\in [0,1],
\end{equation}
where $W$ is a $d$-dimensional $\{\F_t\}_{t\in [0,1]}$-Brownian motion, $\boldsymbol G\in \mathcal{F}_1$ is an $N$-dimensional random vector, and $\boldsymbol f:$ $\Omega \times [0,1]\times\R^N\times\R^{N\times d}\rightarrow\R^N$ is called the generator of the BSDE.  A solution to \eqref{BSDE-intro} is a pair $(\boldsymbol Y, \boldsymbol Z)$ of an $N$-dimensional semimartingale $\boldsymbol Y$ and an $N\times d$-dimensional adapted process $\boldsymbol Z$ so that \eqref{BSDE-intro} is satisfied a.s. for all $t$.

We characterize the equilibrium problem as a system of BSDEs where $N$ the dimension of the system corresponds to the sum of the number of agents in the economy and the risky asset. The generator $\boldsymbol f$ depends on $\boldsymbol Z$ nonlinearly and exhibiting quadratic growth. The wellposedness of systems of BSDEs $(N>1)$ whose generators are allowed to grow quadratically is an important and long-standing open problem posed by  Peng in \cite{Pen99}. The wellposedness of such systems with a ``smallness" assumption on the $L^\infty$-norm of $\boldsymbol G$ is established in \cite{Tevzadze}. Without further structural assumptions on $\boldsymbol f$, solutions may not exist as is illustrated by the example in \cite{FreRei11}. Several structural assumptions on $\boldsymbol f$ have been identified: \cite{Tan03} studies linear-quadratic systems, \cite{CheNam15} proposes a special structure which, in a Markovian setting and using a change of probability measure, allows the problem to be transformed into one which can be solved, \cite{HuTan15} proposes a diagonally-quadratic structure under which existence and uniqueness are obtained without the Markovian assumption. Superquadratic cases in the Markovian setting are studied in \cite{KupLuoTan16}. Also in the Markovian setting, existence and uniqueness are established under a general Lyapunov condition and an a priori local boundedness assumption in \cite{hXing2018}. From an application point of view, the  global existence and uniqueness of Radner equilibria is studied in \cite{Kardaras-Xing-Zitkovic} and obtained in a Markovian setting. Systems of BSDEs with quadratic growth in $\boldsymbol Z$ are also applied to other types of equilibrium problems, see \cite{Kramkov-Pulido} for a price impact model and \cite{Weston20} for an equilibrium problem in an  annuity market. Related PDE approach is taken in \cite{Zit12, Zha12, ChoLar14}. 

The system of BSDEs considered in the present paper departs from the aforementioned literature because its generator $\boldsymbol f$ is \emph{discontinuous} in $\boldsymbol z$.  A standard technique to construct a solution to \eqref{BSDE-intro} is to consider a family of approximating BSDEs whose generators $\boldsymbol f_n$ are well-behaved and converge to $\boldsymbol f$ as $n\rightarrow \infty$. Let $(\boldsymbol Y_n, \boldsymbol Z_n)$ be the solution of the approximating BSDE with generator $\boldsymbol f_n$. Suppose that $(\boldsymbol Y_n, \boldsymbol Z_n)_{n}$ (or a subsequence thereof) converges to $(\boldsymbol Y, \boldsymbol Z)$. In order to verify that $(\boldsymbol Y, \boldsymbol Z)$ is indeed a solution to \eqref{BSDE-intro}, one needs to  prove that, for each $t\in[0,1]$, 
\begin{equation}\label{conv-intro}
 \tint_t^1 \boldsymbol f_n (s, \boldsymbol Y_{n,s}, \boldsymbol Z_{n,s}) \,\Id s \rightarrow  \tint_t^1 \boldsymbol f (s, \boldsymbol Y_s, \boldsymbol Z_s) \,\Id s, \quad \prob-a.s., \quad \text{as } n\rightarrow \infty.  
\end{equation}
To this end, the continuity of $\boldsymbol f$ is a standard assumption; for $N=1$ cases, see \cite{Lepeltier-SanMartin} for continuous generators with linear growth, \cite{mKobylanski2000} and \cite{Briand-Hu} for generators with quadratic growth; for $N>1$ cases, see \cite{hXing2018} and \cite{Richou19} for generators with quadratic growth.    

In the case of the equilibrium problem considered in this paper, $\boldsymbol f(\boldsymbol z)$ is discontinuous at  the point $|\boldsymbol z^0|=0$, where $\boldsymbol z^0$ is the first row of the $N\times d$ dimensional matrix $\boldsymbol z$.  Studying the discontinuity of the stochastic process $\boldsymbol f(\boldsymbol Z)$ is  therefore closely related to determining the zeros of the process $\boldsymbol Z$, i.e., the \emph{nodal set} of $\boldsymbol Z$. When $N>1$ and $d=1$, \cite{HM} studies a system of BSDEs with discontinuous generators which emerges from non-zero-sum Nash games of bang-bang type. The generators of the BSDEs are modified at the nodal set of $\boldsymbol Z$ to establish the existence of an equilibrium. A similar approach is used in the case when $d>1$ in \cite{HM2}. When $N=d=1$, \cite{MPR} uses the representation of $\boldsymbol Z$ in terms of Malliavin derivatives to study the existence of densities of BSDE solutions in a Markovian setting.  The nondegeneracy of $\boldsymbol Z$  ensures that the marginal law of $\boldsymbol Y$ is absolutely continuous with respect to the Lebesgue measure on $\mathbb{R}$. Using a PDE representation of $\boldsymbol Z$, \cite{CLQX} examines the nodal set and the monotonicity of $\boldsymbol Z$. When $d>1$ and $N\geq 1$, $\boldsymbol Z$ is vector- or even matrix-valued. Using representations of $\boldsymbol Z$ to investigate whether some components of  $\boldsymbol Z$ equals zero becomes less effective. No general results beyond the linear case are available to the best of our knowledge. 

In the equilibrium context, the discontinuity of $\boldsymbol f$ arises when the stock volatility degenerates. Nondegenerate volatility has been studied in the literature  in the context of dynamically complete Radner equilibria  and the problem of market completion with derivatives, cf. \cite{AR}, \cite{HMT}, \cite{HR}, \cite{Kramkov15}, and \cite{Schwarz}. Thanks to the market completeness, the systems of equations are linear. Our incomplete market gives rise to highly nonlinear systems, making the analysis of the stock volatility more challenging. 

We hope to stress that the discontinuity of the BSDE systems considered cannot be removed ex ante. Whether $\boldsymbol f(\cdot, \boldsymbol Y, \boldsymbol Z)$ is continuous along the solution $(\boldsymbol Y, \boldsymbol Z)$ depends on the solution, which is  determined ex post. We present sufficient conditions on problem primitives so that the solution $(\boldsymbol Y, \boldsymbol Z)$ avoids discontinuity of the function $\boldsymbol f$.

\medskip

\noindent{\bf Unique continuation  property and backward uniqueness.} In order to determine the nodal set of $\boldsymbol Z$, we borrow analytic tools: unique continuation and backward uniqueness. Let us first briefly discuss these tools.  Consider a scalar function $u: [0, 1]\times \mathbb{R^d} \rightarrow \mathbb{R}$ satisfying a parabolic differential equation 
\begin{equation}\label{BU-ex}
 P u = \boldsymbol W (\nabla u)^\top + V u, \quad \text{over } [0,1) \times \mathbb{R}^d,
\end{equation}
where $P:= \partial_t + \frac12 \nabla \cdot (\boldsymbol A(x,t)\nabla)$ is a (backward) parabolic operator with leading coefficients $\boldsymbol A: [0,1] \times \R^d \rightarrow \R^{d\times d}$ and functions $\boldsymbol W: [0,1] \times \R^d \rightarrow \R^d$ and $V: [0,1] \times \R^d \rightarrow \R$, called respectively the first order drift term and the zero order potential. Recalling that when $\frac 12 \boldsymbol A$ is the identity matrix, $\boldsymbol W\equiv 0$ and $V\equiv 0$, the locally bounded solutions $u$ to the backward heat equation \eqref{BU-ex} verify that  $u(t,\cdot)$ is an analytic function with respect to the space-variables, for all times $0\le t < 1$, the historial study of the \emph{unique continuation  property} consists essentially on trying to find minimal conditions on $\boldsymbol A$, $\boldsymbol W$ and $V$ such that the solutions $u$ to \eqref{BU-ex} preserve certain know properties of space analytic functions even when $\boldsymbol A$, $\boldsymbol W$ and $V$ are not analytic. For example,  the \emph{unique continuation} studies the following questions:
\begin{center}
 Does $u$ satisfying \eqref{BU-ex} and $u(\tau,\cdot)=0$ over $\{\tau\}\times B_\rho$, for some $\rho >0$ and $0\le \tau<1$,  imply that $u(\tau, \cdot) \equiv 0$ over $\R^d$? If one knows that $u(\tau,\cdot)$ has a zero of infinite order at $x=0$, for some $0\le \tau< 1$, does it follow that $u(\tau,\cdot)\equiv 0$?
\end{center}
On the other hand, the \emph{Backward uniqueness} investigates the following type of backward in time uniqueness property:
\begin{center}
 Does $u$ satisfying \eqref{BU-ex} and $u(\tau, \cdot) \equiv 0$ over $\R^d$, for some $0\le\tau < 1$, imply that $u \equiv 0$ over $[\tau,1]\times \R^d$?
\end{center}
 We refer to the review \cite{Vessella-review} for a textbook treatment on unique continuation properties and backward uniqueness for solutions to second order parabolic equations.

For our application to BSDEs, consider a Markovian setting where $\boldsymbol G = \boldsymbol g (X_1)$ for a function $\boldsymbol g: \R^d \rightarrow \R^N$ and a vector-valued forward process $X$. The BSDE \eqref{BSDE-intro} is expected to admit a Markovian solution $(\boldsymbol Y, \boldsymbol Z) = (\boldsymbol v, \boldsymbol u \boldsymbol \sigma)(\cdot, X)$ for functions $\boldsymbol v: [0,1] \times \R^d \rightarrow \R^N$, $\boldsymbol u : [0,1]\times \R^d \rightarrow \R^{N\times d}$, and the volatility $\boldsymbol \sigma$ of $X$. Suppose that $\boldsymbol u$ satisfies a system of equations of type \eqref{BU-ex} with the terminal condition $\boldsymbol u(1, \cdot) = \nabla \boldsymbol g$. The unique continuation and backward uniqueness properties for $\boldsymbol u$ can help us investigate the nodal set of $\boldsymbol u$. More specifically, if the nodal set of $\boldsymbol u$ has positive Lebesgue measure on $[0,1]\times \R^d$, then unique continuation and backward uniqueness would imply that $\boldsymbol u \equiv 0$ on $[0,1]\times \R^d$, which may contradict with nonzero terminal condition $\nabla \boldsymbol g$. Therefore the nodal set of $\boldsymbol u$ must have zero Lebesgue measure on $[0,1]\times \R^d$.

\medskip

\noindent{\bf Our contribution and main results. } Motivated by the equilibrium problem, we consider a family of Markovian BSDE systems whose generator has quadratic growth in $\boldsymbol Z$, satisfies the structural condition identified by Bensoussan and Frehse (cf. \cite{Bensoussan-Frehse}) and an a priory boundedness condition (cf. Assumption \ref{ass:BSDE} later). Moreover, the generator is discontinuous when  the first row of the matrix-valued $\boldsymbol z$ is zero, that is when $|\boldsymbol z^0|=0$.  For bounded terminal conditions satisfying  certain global integrability conditions, we prove in Theorem \ref{thm:BSDE-existence} the existence of a solution $(\boldsymbol Y, \boldsymbol Z)$ where $\boldsymbol Y$ and $\boldsymbol Z$ are both bounded and $\boldsymbol Z^0$, the first row of $\boldsymbol Z$, is nonzero almost everywhere. As an application, Theorem \ref{thm:equilibrium} establishes the existence of a Radner equilibrium in which the stock volatility is nondegenerate almost everywhere. { Once the equilibrium is characterized by a system of BSDEs or PDEs and the existence of its solutions is established, one can employ standard numeric methods for PDEs to study equilibrium quantities in models with economic interest. We explore this direction in Example \ref{exa:option}} We also identify  three economies in which we obtain explicit solutions for all equilibrium quantities including  stock expected return,  volatility, and optimal strategies of agents. Example \ref{exa:complete} considers an economy where the market is complete, that is when $d=1$, { Example \ref{exa:nonhed} presents a case where agents only trade in the market to exchange hedgeable risk,}
and Example \ref{exa:Gaussian} studies an incomplete economy in which the stock dividend and the endowments are Gaussian distributed.

The present paper contributes to the literature in several ways. First, we present a self-contained backward uniqueness result over $\mathbb{R}^d$ in Theorem \ref{thm:BU} for a vector-valued function $\boldsymbol u$ which satisfies a second order backward parabolic differential inequality with variable time-dependent leading coefficients, a bounded first order drift term, and an unbounded potential of zero order term. To our surprise, despite of the simplicity of the question raised in the statement of Theorem \ref{thm:BU} and of the large number of  related existing publications, 
we found that the result in Theorem \ref{thm:BU} 
has not been considered in the current literature on backward uniqueness for second order parabolic equations (cf. \cite{doi:10.1002/cpa.3160090407,ito1958,LionsMalgrange1960,protter1961,doi:10.1002/cpa.3160200106,kurata1994,lEscauriaza2004a,Escauriaza_2003c,lEscauriaza2004b,10.2307/4097306,tNguyen2010,delsanto2015,WuJieZhangLiqun,jWu2017,WuZhang1}) or in related publications on unique continuation properties of their local solutions (cf. \cite{Landis_1974,doi:10.1002/cpa.3160430105,sautscheurer,sogge,HanFangHua94,chen,osti_441145,escauriaza2000,EscauriazaVega2001,EscauriazaFernandez,Fernandez03,AlessandriniVessella,EscauriazaFernandezVessella,KochTataru}). 
{ 
 More specifically, the following list outlines papers which handle some aspects of modelling components that we need:
 \begin{itemize}
 \item Variable leading coefficients are considered in \cite{sautscheurer, doi:10.1002/cpa.3160430105, EscauriazaFernandez, Fernandez03, WuJieZhangLiqun,jWu2017,WuZhang1}. But \cite{doi:10.1002/cpa.3160430105} studies time-independent coefficients and other papers focus on bounded zero order potentials. 
 \item Unbounded zero order potentials satisfying some integrability assumptions are studied in \cite{sogge}. However the set $\{\boldsymbol{u} =0\}$ is assumed to be open, while we need to work with a measurable set { for our aforementioned argument by contradiction}. \cite{escauriaza2000,EscauriazaVega2001,KochTataru} also consider unbounded potentials, but focus only on unique continuation properties. 
 \end{itemize}
}

{ To study our incomplete equilibrium model, we need to work with a unbounded zero order potential, because the second order spatial derivative of agent's certainty equivalent is only globally $L^{d+2}$-integrable.} { In order to work with all modelling components, we overcome several technical difficulties. 
First, in order to work with a measurable set $\{\boldsymbol{u}=0\}$, we extend techniques in \cite{Regbaoui} for elliptic equations to parabolic equations, meanwhile adjust arguments in \cite{EscauriazaFernandezVessella} to handle unbounded zero order potential.} { Second,  the Carleman inequality we derive in Lemma \ref{T: teormea2}  contains an additional term, which is ignored in \cite{EscauriazaFernandez}. This additional term helps us handle the unbounded zero order potential.
Consequently, we present all ingredients in the proof of Theorem \ref{thm:BU}, whose statement is not covered by aforementioned literature.}

Our unique continuation and backward uniqueness results imply that $|\boldsymbol Z^0|\neq 0$ almost everywhere, hence the discontinuity of $\boldsymbol f$ is avoid and the convergence in \eqref{conv-intro} holds. Our result could be useful to study BSDEs with discontinuous generators and nodal sets of $\boldsymbol Z$, in particular in the case when $d>1$. 

Second, we obtain a general existence result for an incomplete Radner equilibrium in a continuous-time endowment economy.  To the best of our knowledge, this is the first time such result is obtained. { We focus on the setting with discrete dividend and random endowment. While the study of this setting is well understood in discrete-time (cf. \cite{MM-book}), results in continuous time are rare to date. Continuous dividend and random endowment setting is studied in \cite{Lar12}, \cite{Lar14}, and \cite{Lar16}, where a linear structure is explored to establish equilibrium quantities explicitly. As soon as one moves away from this linear structure, an abstract study of the equations characterising the equilibrium becomes necessary and the technical challenges we face in this paper appear. Going beyond the linear setting also generates new economic insight. Example \ref{exa:option} numerically shows that nonlinear discrete random endowment can generate excess equity premium comparing to its complete market analogue.}

{ In \cite{Weston20}, an equilibrium model where the agents only trade a stochastic annuity is studied and volatility of the annuity is also determined endogenously. In this case, agent's optimal holding in the annuity does not depend on the endogenous volatility, so the issue of degenerating volatility does not appear.}

{
Our results also relate to the literature studying endogenously complete dynamic equilibria. In the study of these problems the non-degeneracy of the stock price volatility is shown to follow from the time analyticity of the solution of the linear PDEs characterising the stock price.  We replace the analysis of the time analyticity of solutions of linear PDEs with backward uniqueness results for nonlinear PDEs.} We show that the time analyticity assumption on model coefficients can be replaced with weaker conditions in the case of a single stock (cf. Theorem \ref{thm:BU} for the precise statement of these conditions).

Third, more technically, we complement the H\"{o}lder estimate of solutions to systems of quadratic BSDE  in \cite{hXing2018} with a local Sobolev norm estimate on an unbounded domain. This result parallels the boundary Sobolev estimate in \cite[Proposition 5.1]{Bensoussan-Frehse} which applies on a bounded domain. In combination with  the classical Sobolev embedding theorem, this Sobolev norm estimate allows us to show that $\boldsymbol Z$ is bounded. Applying this norm estimate to an approximating sequence of a quadratic BSDE system with continuous generator, one could establish the uniform BMO-norm estimate of $\boldsymbol Z^n\cdot W$ needed to construct a solution by the stability argument in \cite{Richou19}.

\medskip

\noindent{\bf Structure of the paper.} The remainder of the paper is organized as follows. The equilibrium problem is presented in \textsection \ref{sec:model} and subsequently characterized via a system of BSDEs in \textsection \ref{sec:formulation}. The main equilibrium result is also presented in \textsection \ref{sec:formulation} and followed by four examples in \textsection \ref{sec:examples}. A class of quadratic BSDE systems with discontinuous generators is introduced and the main existence result is presented in \textsection \ref{sec:BSDE-main}. In \textsection \ref{sec:approximation}, a sequence of approximating BSDEs is constructed and properties and convergence of solutions are analyzed. A self-contained backward uniqueness result is presented in \textsection \ref{sec:bu}. Additional proofs are presented in \textsection \ref{sec:add-proofs}. { Finally, some potential future research questions are discussed in \textsection \ref{sec:future}.}

\medskip

\noindent{\bf Notation and conventions.} We mark row vector or matrix valued functions or processes by bold symbols, except the $\mathbb{R}^d$-valued spatial variable $x$ which is an independent variable. Superscripts indicate components in a vector or matrix valued object. For a $(I+1)\times d$ matrix $\boldsymbol z$,  we denote $\boldsymbol z^0$ and $\boldsymbol z^i$, for $i=1, \dots, I$,  rows of $\boldsymbol z$ from the first to the last. The superscript $\top$ of a matrix indicates its transpose. Subscripts are time index or index in a sequence. 

For a scalar function $v : [0,1] \times \mathbb{R}^d \rightarrow \mathbb{R}$, $\nabla v$ is the gradient as a $\mathbb{R}^d$-valued row vector. For a vector-valued function $\boldsymbol v : [0,1] \times \mathbb{R}^d \rightarrow \mathbb{R}^{I+1}$, $\nabla \boldsymbol v$ is understood as the $\mathbb{R}^{(I+1)\times d}$-valued Jacobian matrix. 

For $1\leq p \leq \infty$, $\gamma\in (0,1]$ and a domain $B\subseteq \mathbb{R}^d$ with its closure $\overline{B}$, Sobolev spaces $W^2_p(B), W^2_\infty(B), W^{1,2}_p((0,1)\times B), W^{1,2}_{p, loc}((0,1)\times \mathbb{R}^d)$ and H\"{o}lder spaces $C^{\frac{\gamma}{2}, \gamma}([0,1]\times \overline{B})$ and $C^{1+ \frac{\gamma}{2}, 2+\gamma}([0,1] \times \overline{B})$ are defined as in \cite{oLadyzhenskaya1968}. $L^\infty + L^{d+2}$ is the following class of  functions
\[
   \{f: [0,1]\times \mathbb{R}^d \rightarrow \mathbb{R} : f= g+h, g\in L^\infty((0,1)\times \mathbb{R}^d), h \in L^{d+2}((0,1) \times \mathbb{R}^d))\}.
\]
The vectorial version is defined analogously.
The Banach space $L_t^\infty L_x^2([0,T]\times \mathbb{R}^d)$ is the space of functions $f:[0,T]\times \mathbb{R}^d \rightarrow \mathbb{R}$ with the finite norm
\[
 \|f\|_{L^\infty_t L_x^2([0,T]\times \mathbb{R}^d)} := \esssup_{t\in [0,T]} \Big(\tint_{\mathbb{R}^d}|f(t,x)|^2 \Id  x\Big)^{\frac12}.
\]
 
For a filtered probability space  $(\Omega, \mathcal{F}, (\mathcal{F}_t)_{t\in[0,1]}, \prob)$, $\mathbb{E}_t[\cdot]$ denotes the conditional expectation $\mathbb{E}[\cdot | \mathcal{F}_t]$, $\mathcal{H}^0(\R^d)$ is the class of $\R^d$-valued progressively measurable processes, $\eta \cdot S$ denotes the stochastic integral $\int_0^{\cdot} \eta_t dS_t$, and $\mathcal{E}(\eta \cdot S)$ is the stochastic exponential $\exp\big(-\tfrac12 \langle \eta\cdot S\rangle + \eta \cdot S\big)$.

\section{The Model}\label{sec:model}
Time and uncertainty are described by a filtered probability space $(\Omega, \mathcal{F}, (\mathcal{F}_t)_{t\in[0,1]}, \prob)$ satisfying the usual conditions. The initial $\sigma$-algebra $\mathcal{F}_0$ is trivial and $\mathcal{F} = \mathcal{F}_1$.

There exists a single perishable and perfectly divisible consumption good in the economy which serves as num\'{e}raire: income, consumption and prices are expressed in units of this good. A total of $I$ \textit{agents}, whose lifespan is represented by the interval $[0,1]$, populate the economy. Agents are endowed with an \textit{endowment} $\boldsymbol E = (E^i)_{i=1,\ldots,I}$, where each $E^i$ is a random variable measurable with respect to $\mathcal{F}_1$. Wealth may be consumed at the end of the time horizon only and the agents' \textit{preference ordering} over consumption is represented by CARA utility functions
\[
U^i(x) = -e^{-\frac{x}{\delta^i}}, \qquad x\in \R,\ i=1,\ldots,I.
\]
Hence individual agents may differ in their degree of \textit{risk-tolerance} $\delta^i >0$.

The financial market consists of one \textit{riskless asset} (bond) in zero net supply and one \textit{risky asset} (stock) in unit net supply. The price of the riskless asset is assumed to be constant, equivalent to the assumption of the interest rate being zero.\footnote{Because agents in this economy only consume at the end of the time horizon, this assumption entails no loss of generality.} The stock pays a dividend $\xi\in\mathcal{F}_1$. At the initiation of the market, the unit of stock is distributed amongst the agents, thereafter it can be traded without any frictions. Throughout time the agents' \textit{positions} in the stock are represented by a stochastic process $\boldsymbol{\theta} = (\theta^i)_{i=1, \dots, I}$. At the end of the time horizon the price $S_1$ of the stock equals the dividend; prior, on the time interval $[0,1)$ its value $S_t$ is determined endogenously by the equilibrium conditions specified below. Notably, the filtration in our setting will be generated by multiple sources of randomness. Therefore, the risk transfers that agents can achieve by trading a single stock are limited and the market is \emph{incomplete}.

Agents form self-financing portfolios in order to maximise their expected utility of terminal consumption. Agent $i$'s optimization problem is
\[
\sup_{\theta^i} \expec\Big[U^i \Big(\tint_0^1 \theta^i_t\ \Id S_t + E^i\Big)\Big].
\]

The CARA nature of the agents' utility functions allows us to simplify notation by scaling all variables
\begin{equation}\label{scaling}
E^i \to \frac{E^i}{\delta^i}, \quad \xi \to \frac{\xi}{\tsum_k \delta^k}, \quad S \to \frac{S}{\tsum_k \delta^k}, \quad \theta^i \to \frac{\theta^i}{\alpha^i}, \quad \text{where } \alpha^i := \frac{\delta^i}{\tsum_k \delta^k}.
\end{equation}
Hence $\alpha^i\in [0,1]$ and $\tsum_i \alpha^i =1$. Hereafter the variables $(\boldsymbol E,\xi,S,\boldsymbol \theta)$ will always represent the scaled, dimensionless quantities. The scaling of consumption has the effect that all agents now rely on the same utility function $U=U(x)$ when computing their preference ordering over \textit{scaled} consumption:
\[
U(x) = -e^{-x}, \qquad x\in \mathbb{R}.
\]
Using the scaled variables agent $i$'s optimization problem is now of the form
\begin{equation}\label{agent-op}
\sup_{\theta^i} \expec\Big[U \Big(\tint_0^1 \theta^i_t\ \Id S_t + E^i\Big)\Big].
\end{equation}
Throughout, the consistency of their mutual investment decisions is ensured by the equilibrium conditions laid out in the below definition.

Let $\mathcal{Q} = \{\mathbb{Q}^i\}_{i=1,\ldots,I}$ denote the set of probability measures defined by
\[
\frac{\Id \mathbb{Q}^i}{\Id \mathbb{P}} = \frac{U'\Big( \int_0^1 \theta^i_t\ \Id S_t  + E^i\Big) }{\mathbb{E}\Big[U'\Big( \int_0^1 \theta^i_t\ \Id S_t  + E^i\Big)\Big]} = \frac{ e^{-\int_0^1 \theta^i_t\ \Id S_t  - E^i} }{\mathbb{E}\Big[e^{-\int_0^1 \theta^i_t\ \Id S_t  - E^i}\Big]}.
\]
If $\mathcal{Q}$ is well defined, we call it the set of \textit{pricing measures}.

\begin{defn}\label{def:radner_equilibrium}
A pair $(\boldsymbol \theta, S)$ consisting of a predictable process $\boldsymbol \theta = (\theta^i)_{i=1,\ldots,I}$ and a semimartingale $S$ is a \textit{Radner equilibrium}, if
\begin{enumerate}
\item[(i)] the collection $\mathcal{Q}$ is well defined, $S_1 = \xi$ and, for $i =1,\ldots,I$, the processes $S$ and $\theta^i \cdot S$ are $\mathbb{Q}^i$-martingales;
\item[(ii)] the stock market clears:
\begin{equation}\label{eq:clearing}
\sum_{i=1}^I \alpha^i \theta^i_t = 1, \qquad t\in [0,1].
\end{equation}
\end{enumerate}
\end{defn}

The clearing condition \eqref{eq:clearing} is formulated in terms of agents' scaled positions in the stock. It readily implies that in equilibrium agents' unscaled holdings in the stock sum to one. We do not explicitly state the clearing condition for the bond market. Condition \eqref{eq:clearing} and the self-financing nature of agents' portfolios immediately imply that the agents' positions in the bond sum to zero.

The following lemma, the proof of which is presented in \textsection \ref{sec:add-proofs}, recalls that in light of the equilibrium stock price $S$ the agents' choice of strategy $\boldsymbol \theta$ in the above definition is optimal. 
\begin{lem}\label{Lem:port}
Let $(\boldsymbol \theta, S)$ be a Radner equilibrium according to Definition \ref{def:radner_equilibrium}, then, for $i=1,\ldots,I$, it holds that $\E[|U(\theta^i \cdot S + E^i)|] < \infty$ and  
\[
\E\Big[ U\Big( \tint_0^1 \theta^i_t\ \Id S_t + E^i \Big) \Big] \geq \E \Big[ U\Big( \tint_0^1 \eta_t\ \Id S_t + E^i \Big) \Big],
\]
for all processes $\eta$ such that $\eta\cdot S$ is a supermartingale under the pricing measure $\mathbb{Q}^i$.
\end{lem}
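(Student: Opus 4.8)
The plan is to verify the integrability and optimality claims by exploiting the martingale/supermartingale structure under the pricing measure $\mathbb{Q}^i$ together with the explicit exponential form of $U$ and of $d\mathbb{Q}^i/d\mathbb{P}$. First I would record that, by definition of $\mathbb{Q}^i$, the density is proportional to $e^{-(\theta^i\cdot S)_1 - E^i}$, so that the normalizing constant $c_i := \mathbb{E}[e^{-(\theta^i\cdot S)_1 - E^i}]$ is finite and strictly positive (this is built into the statement that $\mathcal{Q}$ is well defined). Since $U(x) = -e^{-x}$, we have $U((\theta^i\cdot S)_1 + E^i) = -e^{-(\theta^i\cdot S)_1 - E^i} = -c_i\,\tfrac{d\mathbb{Q}^i}{d\mathbb{P}}$, whose $\mathbb{P}$-expectation is $-c_i$; this immediately gives $\mathbb{E}[|U(\theta^i\cdot S + E^i)|] = c_i < \infty$, proving the first claim and identifying the optimal value as $-c_i$.

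For the optimality inequality, let $\eta$ be any strategy such that $\eta\cdot S$ is a $\mathbb{Q}^i$-supermartingale. I would use the Fenchel-type inequality for the exponential utility and its conjugate: for all $x\in\mathbb{R}$ and $y>0$,
\begin{equation*}
U(x) = -e^{-x} \leq xy - y\log y + y \cdot (\text{const}),
\end{equation*}
or, more cleanly, the pointwise bound $-e^{-x} \le -\tfrac12\langle\cdot\rangle$-free estimate $e^{-x} \ge y(1 + \log y - \log\, \cdot)$; concretely I would apply $-e^{-a} \le -e^{-b} + e^{-b}(a-b)$ (convexity of $e^{-\cdot}$, i.e. the tangent-line inequality) with $a = (\eta\cdot S)_1 + E^i$ and $b = (\theta^i\cdot S)_1 + E^i$. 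This yields
\begin{equation*}
U\big((\eta\cdot S)_1 + E^i\big) \;\le\; U\big((\theta^i\cdot S)_1 + E^i\big)\Big(1 - (\eta\cdot S)_1 + (\theta^i\cdot S)_1\Big).
\end{equation*}
Taking $\mathbb{P}$-expectations and using $U((\theta^i\cdot S)_1 + E^i) = -c_i\, d\mathbb{Q}^i/d\mathbb{P}$, the right-hand side becomes $-c_i\,\mathbb{E}^{\mathbb{Q}^i}\big[1 - (\eta\cdot S)_1 + (\theta^i\cdot S)_1\big]$. Now $\theta^i\cdot S$ is a $\mathbb{Q}^i$-martingale null at $0$, so $\mathbb{E}^{\mathbb{Q}^i}[(\theta^i\cdot S)_1] = 0$, while $\eta\cdot S$ is a $\mathbb{Q}^i$-supermartingale null at $0$, so $\mathbb{E}^{\mathbb{Q}^i}[(\eta\cdot S)_1] \le 0$. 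Hence the right-hand side is $\le -c_i = \mathbb{E}[U((\theta^i\cdot S)_1 + E^i)]$, which is exactly the asserted inequality.

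The main obstacle is integrability/justification of the expectation manipulations: one must ensure $\mathbb{E}\big[U((\eta\cdot S)_1 + E^i)\big]$ and $\mathbb{E}^{\mathbb{Q}^i}[(\eta\cdot S)_1]$ are well defined (possibly $-\infty$, in which case the inequality is trivial) and that the supermartingale property is applied at the correct endpoint. I would handle this by noting the tangent-line inequality holds pointwise without integrability assumptions, so after taking $\mathbb{P}$-expectations the left side is a well-defined element of $[-\infty, 0)$; if it is $-\infty$ there is nothing to prove, and otherwise the bound above is valid term by term. The supermartingale claim $\mathbb{E}^{\mathbb{Q}^i}[(\eta\cdot S)_1] \le (\eta\cdot S)_0 = 0$ is the defining property evaluated between $0$ and $1$ (using that $\mathbb{Q}^i$-supermartingales are assumed, and $\eta$ is predictable with $\eta\cdot S$ null at $0$ by the self-financing convention). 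A minor care point is that the constant $c_i$ is the same on both sides, so it cancels cleanly; no measure change beyond the already-defined $\mathbb{Q}^i$ is needed.
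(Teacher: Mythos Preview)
Your proof is correct and follows essentially the same approach as the paper. Both arguments reduce to the pointwise tangent-line inequality for the concave exponential utility, $U(a)\le U(b)+U'(b)(a-b)$, applied with $b=(\theta^i\!\cdot S)_1+E^i$ and $a=(\eta\!\cdot S)_1+E^i$, followed by taking $\mathbb P$-expectations and using that $U'((\theta^i\!\cdot S)_1+E^i)$ is proportional to $d\mathbb Q^i/d\mathbb P$, so that the $\mathbb Q^i$-martingale property of $\theta^i\!\cdot S$ and the $\mathbb Q^i$-supermartingale property of $\eta\!\cdot S$ finish the job; the paper merely phrases the same inequality via the Legendre conjugate $V(U'(x))=U(x)-xU'(x)\ge U(c)-cU'(x)$, which is identical.
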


\section{Radner equilibria as solutions to a system of BSDEs}\label{sec:formulation}
We assume throughout the rest of the paper that the filtration $(\mathcal{F}_t)_{t\in[0,1]}$ is generated by a $d$-dimensional Brownian motion $W = (W^j)_{j=1,\ldots,d}$. 


Given a trading strategy $\boldsymbol \theta$ and a stock price $S$, we denote by $\boldsymbol R = (R^i)_{i=1,\ldots,I}$ the agents' \textit{certainty equivalents} of their continuation utilities, given by
\begin{equation}\label{eq:cert_equiv}
\begin{aligned}
R^i_t := U^{-1} \Big(  \E_t \Big[ U\Big( \tint_t^1 \theta^i_u\ \Id S_u + E^i \Big) \Big] \Big).
\end{aligned}
\end{equation}

The following theorem characterises Radner equilibria in terms of solutions to a system of quadratic BSDEs. The vector-valued process $\boldsymbol \zeta$, $\boldsymbol \gamma^i$, $i=1, \dots, I$, introduced in this theorem are always considered as row vectors.
\begin{thm}\label{Thm:char}
A pair $(\boldsymbol \theta,S)$ is a Radner equilibrium if and only if there exist processes $\boldsymbol R\in \mathcal{H}^0(\mathbb{R}^I)$, $\boldsymbol \zeta\in\mathcal{H}^0(\mathbb{R}^{d})$, and $\boldsymbol \gamma\in\mathcal{H}^0(\mathbb{R}^{I\times d})$ such that $(S,\boldsymbol R,\boldsymbol\zeta, \boldsymbol\gamma)$ satisfies, for $i=1,\ldots,I$ and every $t\in[0,1]$,
\begin{equation}\label{BSDE-sys}
\begin{cases}
&S_t = \xi - \int_t^1 \big(\tsum_k\alpha^k\boldsymbol \gamma^k_u + \boldsymbol \zeta_u\big) \boldsymbol \zeta_u^\top\ \Id u - \int_t^1 \boldsymbol \zeta_u\ \Id W_u\\
&R^i_t = E^i + \frac{1}{2}\int_t^1\big(\big(\boldsymbol \zeta_u +\sum_k \alpha^k \boldsymbol \gamma^k_u - \boldsymbol \gamma^i_u\big) \frac{\boldsymbol \zeta_u^\top}{|\boldsymbol \zeta_u|}\big)^2 1_{\{ \boldsymbol \zeta_u\neq 0 \}} - \tfrac12 |\boldsymbol \gamma^i_u|^2\ \Id u - \int_t^1 \boldsymbol \gamma^i_u\ \Id W_u\
\end{cases}
\end{equation}
and such that $\boldsymbol \theta$ has the decomposition
\begin{equation}\label{eq:strat_equ}
\theta^i_t = 1 + \big( \tsum_k\alpha^k\boldsymbol \gamma^k_t - \boldsymbol \gamma^i_t \big) \frac{\boldsymbol \zeta_t^\top}{|\boldsymbol \zeta_t |^2}, \quad \text{if } \boldsymbol \zeta_t\neq 0,
\end{equation}
or is arbitrarily chosen to satisfy \eqref{eq:clearing}, if $\boldsymbol \zeta_t =0$,
and the stochastic exponentials $\mathcal{Z}^i:=\mathcal{E}(-(\boldsymbol \gamma^i + \theta^i\boldsymbol \zeta)\cdot W)$ and the processes $\mathcal{Z}^iS$ and $\mathcal{Z}^i(\theta^i \cdot S)$ are  $\mathbb{P}$-martingales for all $i=1, \dots, I$.
\end{thm}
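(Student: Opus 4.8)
The plan is to establish both implications by recasting the equilibrium conditions of Definition \ref{def:radner_equilibrium} as an It\^o decomposition of $S$ and of the certainty equivalents $R^i$, and then reading \eqref{BSDE-sys} off the resulting coefficients. First I would assume $(\boldsymbol\theta,S)$ is a Radner equilibrium. Since $\mathcal Q$ is well defined, Lemma \ref{Lem:port} gives $e^{-\int_0^1\theta^i_u\,\Id S_u-E^i}\in L^1(\mathbb P)$, so $M^i_t:=\E_t\big[e^{-\int_0^1\theta^i_u\,\Id S_u-E^i}\big]$ is a strictly positive uniformly integrable $\mathbb P$-martingale and $\mathcal Z^i:=M^i/M^i_0$ is the density process of $\mathbb Q^i$. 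From $U(x)=-e^{-x}$ and \eqref{eq:cert_equiv} one has $e^{-R^i_t}=e^{\int_0^t\theta^i_u\,\Id S_u}\,M^i_t$, hence $R^i_t=-\log M^i_t-\int_0^t\theta^i_u\,\Id S_u$, so $\boldsymbol R\in\mathcal H^0(\mathbb R^I)$ and $R^i_1=E^i$. By martingale representation in the Brownian filtration, $M^i=M^i_0\,\mathcal E(-\boldsymbol\nu^i\cdot W)$ for some $\boldsymbol\nu^i\in\mathcal H^0(\mathbb R^d)$; representing the $\mathbb Q^i$-martingale $S$ with respect to the $\mathbb Q^i$-Brownian motion and undoing the Girsanov drift induced by $\mathcal Z^i$ shows that $S$ has under $\mathbb P$ a decomposition $\Id S_t=b_t\,\Id t+\boldsymbol\zeta_t\,\Id W_t$ whose diffusion coefficient $\boldsymbol\zeta\in\mathcal H^0(\mathbb R^d)$ is common to all $i$, while the $\mathbb Q^i$-martingale property forces $b_t=\boldsymbol\nu^i_t\boldsymbol\zeta_t^\top$ for every $i$.

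Next I would set $\boldsymbol\gamma^i:=\boldsymbol\nu^i-\theta^i\boldsymbol\zeta$. Multiplying $b_t=(\boldsymbol\gamma^i_t+\theta^i_t\boldsymbol\zeta_t)\boldsymbol\zeta_t^\top$ by $\alpha^i$, summing over $i$ and invoking \eqref{eq:clearing} yields $b_t=\big(\sum_k\alpha^k\boldsymbol\gamma^k_t+\boldsymbol\zeta_t\big)\boldsymbol\zeta_t^\top$, the $S$-equation of \eqref{BSDE-sys}; subtracting it from $b_t=(\boldsymbol\gamma^i_t+\theta^i_t\boldsymbol\zeta_t)\boldsymbol\zeta_t^\top$ and solving for $\theta^i_t$ on $\{\boldsymbol\zeta_t\neq0\}$ gives \eqref{eq:strat_equ}. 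Applying It\^o's formula to $R^i_t=-\log M^i_t-\int_0^t\theta^i_u\,\Id S_u$ gives $\Id R^i_t=\boldsymbol\gamma^i_t\,\Id W_t+\big(\tfrac12|\boldsymbol\nu^i_t|^2-\theta^i_t b_t\big)\,\Id t$; substituting $b_t=\boldsymbol\nu^i_t\boldsymbol\zeta_t^\top$ and $\boldsymbol\nu^i_t=\boldsymbol\gamma^i_t+\theta^i_t\boldsymbol\zeta_t$ collapses the drift to $\tfrac12\big(|\boldsymbol\gamma^i_t|^2-(\theta^i_t)^2|\boldsymbol\zeta_t|^2\big)$, and, using \eqref{eq:strat_equ} on $\{\boldsymbol\zeta_t\neq0\}$ together with $\theta^i_t\boldsymbol\zeta_t=0$ elsewhere, $(\theta^i_t)^2|\boldsymbol\zeta_t|^2=\big((\boldsymbol\zeta_t+\sum_k\alpha^k\boldsymbol\gamma^k_t-\boldsymbol\gamma^i_t)\tfrac{\boldsymbol\zeta_t^\top}{|\boldsymbol\zeta_t|}\big)^2 1_{\{\boldsymbol\zeta_t\neq0\}}$; this is the $R^i$-equation of \eqref{BSDE-sys}. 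Finally $\mathcal Z^i=\mathcal E(-\boldsymbol\nu^i\cdot W)=\mathcal E(-(\boldsymbol\gamma^i+\theta^i\boldsymbol\zeta)\cdot W)$ is a $\mathbb P$-martingale by construction, and $\mathcal Z^iS$, $\mathcal Z^i(\theta^i\cdot S)$ are $\mathbb P$-martingales by Bayes' rule because $S$ and $\theta^i\cdot S$ are $\mathbb Q^i$-martingales.

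For the converse I would take $(S,\boldsymbol R,\boldsymbol\zeta,\boldsymbol\gamma)$ solving \eqref{BSDE-sys} with $\boldsymbol\theta$ as prescribed and the stated martingale properties. The clearing condition \eqref{eq:clearing} follows from \eqref{eq:strat_equ} (or the arbitrary choice where $\boldsymbol\zeta_t=0$) and $\sum_i\alpha^i=1$, and $S_1=\xi$ from the $S$-equation at $t=1$. Put $\boldsymbol\nu^i:=\boldsymbol\gamma^i+\theta^i\boldsymbol\zeta$ and $\mathcal Z^i:=\mathcal E(-\boldsymbol\nu^i\cdot W)$; using \eqref{eq:strat_equ} to check that $\boldsymbol\nu^i_t\boldsymbol\zeta_t^\top=\big(\sum_k\alpha^k\boldsymbol\gamma^k_t+\boldsymbol\zeta_t\big)\boldsymbol\zeta_t^\top$ equals the drift of $S$, It\^o's formula gives $\Id\big(R^i_t+\int_0^t\theta^i_u\,\Id S_u\big)=\tfrac12|\boldsymbol\nu^i_t|^2\,\Id t+\boldsymbol\nu^i_t\,\Id W_t$, hence $e^{-R^i_t-\int_0^t\theta^i_u\,\Id S_u}=e^{-R^i_0}\mathcal Z^i_t$; evaluating at $t=1$ with $R^i_1=E^i$ and using that $\mathcal Z^i$ is a $\mathbb P$-martingale gives $\E\big[e^{-\int_0^1\theta^i_u\,\Id S_u-E^i}\big]=e^{-R^i_0}\in(0,\infty)$, so $\mathcal Q$ is well defined and $\mathcal Z^i$ is the density process of $\mathbb Q^i$. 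Bayes' rule then turns the $\mathbb P$-martingale property of $\mathcal Z^iS$ and $\mathcal Z^i(\theta^i\cdot S)$ into the $\mathbb Q^i$-martingale property of $S$ and $\theta^i\cdot S$, so all requirements of Definition \ref{def:radner_equilibrium} hold.

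Once the right objects are in place the computations are essentially bookkeeping; I expect the substantive points to be (i) recognising that the densities $\mathcal Z^i=\mathcal E(-(\boldsymbol\gamma^i+\theta^i\boldsymbol\zeta)\cdot W)$ of $\mathbb Q^i$ carry the equilibrium, so that martingale representation produces $\boldsymbol\zeta$ and $\boldsymbol\gamma$ and the drift of $S$ is forced to be consistent across all agents simultaneously via the clearing condition, and (ii) the behaviour on the degenerate set $\{\boldsymbol\zeta_t=0\}$, where $b_t=0$ and $\theta^i_t\boldsymbol\zeta_t=0$, so that the arbitrary choice of $\boldsymbol\theta$ affects neither the dynamics of $S$ nor those of $R^i$ and is consistent with the indicator $1_{\{\boldsymbol\zeta_u\neq0\}}$ in \eqref{BSDE-sys}; the integrability that makes $R^i$, the stochastic integrals and the densities well defined is supplied by Lemma \ref{Lem:port}.
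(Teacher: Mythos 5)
Your proof is correct and follows essentially the same route as the paper: both recover $\boldsymbol\zeta$ and $\boldsymbol\gamma^i$ from martingale representation of the density processes, force the drift of $S$ to be agent-independent, use market clearing to eliminate $\theta^i$, and reverse these computations for the converse. The only cosmetic difference is that you obtain the $\mathbb{P}$-dynamics of $S$ by applying Girsanov to its $\mathbb{Q}^i$-martingale representation, whereas the paper introduces the second martingale $K^i_t=\E_t\big[\xi\,e^{-\int_0^1\theta^i_u\,\Id S_u-E^i}\big]$ and computes $\Id(K^i/L^i)$ by It\^o; the two routes are equivalent.
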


 Denote $\boldsymbol Z$ to be a $\R^{(I+1)\times d}$-valued process whose first row is $\boldsymbol \zeta$ and other rows are specified by $\boldsymbol \gamma$. If we define the function $\boldsymbol f = (f^0, f^1, \dots, f^I)(\boldsymbol z) : \mathbb{R}^{(I+1)\times d} \to \mathbb{R}^{I+1}$, by 
\begin{equation}\label{eq:f}
\begin{split}
 f^0(\boldsymbol z) =& -  \big(\tsum_k\alpha^k\boldsymbol z^k + \boldsymbol z^0 \big) (\boldsymbol z^0)^\top,\\
 f^i(\boldsymbol z) = & \tfrac12 \Big((\boldsymbol z^0 + \tsum_k \alpha^k \boldsymbol z^k - \boldsymbol z^i) \frac{(\boldsymbol z^0)^\top}{|\boldsymbol z^0|}\Big)^2 1_{\{\boldsymbol z^0 \neq 0\}} - \frac12 |\boldsymbol z^i|^2, \quad i = 1, \dots, I,
\end{split}
\end{equation}
where the summation over $k$ runs from $1$ to $I$ and $\boldsymbol z^0$ denotes the first row of the matrix $\boldsymbol z$, then {$\boldsymbol f(\boldsymbol z)$ represents the generator of the BSDE system \eqref{BSDE-sys}. Observe that $\boldsymbol f$ has quadratic growth in $\boldsymbol z$ and that when $\sum_k \alpha^k \boldsymbol z^k - \boldsymbol z^i\neq 0$, the maps $\boldsymbol z^0 \mapsto f^i(\boldsymbol z)$ are \emph{discontinuous} at $|\boldsymbol z^0|=0$. To illustrate this point, let us consider the following example:
\begin{exa}
 Consider $d=I=2$. Take $\sum_k \alpha^k \boldsymbol z^k - \boldsymbol z^i = (1, 0)$. Then, for any $z>0$,
 \[
  \big(\tsum_k \alpha^k \boldsymbol z^k  - \boldsymbol z^i\big) \frac{(\boldsymbol z^0)^\top}{|\boldsymbol z^0|} = \left\{\begin{array}{ll} 1 & \boldsymbol z^0 = (z,0) \\ -1 & \boldsymbol z^0 = (-z,0)\\ 0& \boldsymbol z^0 = (0, z)\end{array}\right..
 \]
 Therefore the previous expression is discontinuous at $\boldsymbol z^0 =(0,0)$. 
\end{exa}
The discontinuity of $f^i$, $i=1, \dots, I$, at $|\boldsymbol z^0| =0$ introduces major difficulties to establish the existence of a solution  $(S, \boldsymbol R,\boldsymbol \zeta, \boldsymbol \gamma)$ to the BSDE system \eqref{BSDE-sys} in order to prove the existence of a Radner equilibrium. In \textsection \ref{sec:BSDE-main}, we will study a family of BSDEs, containing \eqref{BSDE-sys}, with discontinuous generators at $|\boldsymbol z^0|=0$ and construct a solution $(\boldsymbol Y, \boldsymbol Z)$ such that $|\boldsymbol Z^0 |\neq 0$ a.s.-$\Id t\times \Id\prob$.

We consider a Markovian setting in which randomness is driven by a $d$-dimensional process $X$ satisfying 
\begin{equation}\label{eq:X}
 dX_t = \boldsymbol b(t, X_t) \Id t + \boldsymbol \sigma (t, X_t) \ \Id W_t, \quad X_0 \text{ given},
\end{equation}
where $X_0$ and functions  $\boldsymbol b: [0,1] \times \mathbb{R}^d \rightarrow \mathbb{R}^d$, $\boldsymbol \sigma : [0,1]\times \mathbb{R}^d \rightarrow \mathbb{R}^{d\times d}$ are given. Stock dividend and endowment are specified by
\[
 \xi = g^0(X_1) \quad \text{and} \quad E^i = g^i(X_1), \quad i = 1, \dots, I.
\]

We impose the following assumptions on the coefficients $(\boldsymbol b, \boldsymbol \sigma, \boldsymbol g)$:

\begin{ass}\label{ass:X}
$\,$
\begin{enumerate}
\item[(i)]The function $\boldsymbol b$ is once continuously differentiable in space and the function $\boldsymbol \sigma$ is once continuously differentiable in both time and space. Both functions and their first order derivatives are globally bounded and 
\begin{equation*}
\|\nabla\boldsymbol b\|_{L^\infty((0,1)\times \R^d)}+\|\nabla\boldsymbol \sigma\|_{L^\infty((0,1)\times \R^d)}+\|\partial_t\boldsymbol \sigma\|_{L^\infty((0,1)\times \R^d)}\le L_{b,\sigma},
\end{equation*}
for some constant $L_{b,\sigma}$.
\item[(ii)]	
	The functions $\boldsymbol b$, $\boldsymbol \sigma$, and their first order derivatives in space are H\"{o}lder continuous in time and space: there exists $\alpha\in (0,1]$ such that for all $h\in \{b^j,\sigma^{ij},\partial_{x^k} b^j, \partial_{x^k} \sigma^{ij} : i,j,k =1,\ldots,d \}$,
	\[
	|h(t_1,x_1) - h(t_2,x_2) | \leq L_{b,\sigma} (|t_1 - t_2|^{\frac{\alpha}{2}} + |x_1 - x_2|^\alpha), \, \text{ for any } t_1, t_2\in[0,1], x_1, x_2\in \Real^d.
	\]

\item[(iii)]	There exists a constant $\lambda>0$ such that the matrix-valued function $\boldsymbol A := \boldsymbol \sigma \boldsymbol \sigma^\top$ satisfies
	\begin{equation}\label{uni-ell}
	\lambda |\xi|^2 \leq \xi^\top \boldsymbol A(t,x)  \xi \leq \lambda^{-1} |\xi|^2, \quad \text{ for any } (t,x)\in [0,1]\times \Real^d \text{ and }  \xi\in \Real^d.
	\end{equation}
	
	\item[(iv)] The function $\boldsymbol g$ is twice continuously differentiable and $\boldsymbol g\in W^2_{2} \cap W^2_{\infty} (\R^d)$.
	Moreover, there exists a point $x_0 \in \R^{d}$ such that $|\nabla g^0(x_0)|\neq 0$.
\end{enumerate}
\end{ass}
The previous assumption readily imply that the SDE in \eqref{eq:X} admits a unique strong solution $X$. 

We now present our main result on the existence of Radner equilibrium.
\begin{thm}\label{thm:equilibrium}
 Let Assumption \ref{ass:X} hold. Then there exists a Radner equilibrium $(\boldsymbol \theta, S)$ with nondegenerate stock volatility, i.e., $|\boldsymbol \zeta |\neq 0$ a.s.-$\Id t\times \Id \mathbb{P}$.
\end{thm}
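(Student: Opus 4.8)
The plan is to recover a Radner equilibrium from a solution of the BSDE system~\eqref{BSDE-sys}, and to obtain that solution from the abstract existence result Theorem~\ref{thm:BSDE-existence}. By Theorem~\ref{Thm:char} it suffices to exhibit processes $\boldsymbol R\in\mathcal{H}^0(\R^I)$, $\boldsymbol\zeta\in\mathcal{H}^0(\R^d)$, $\boldsymbol\gamma\in\mathcal{H}^0(\R^{I\times d})$ and a semimartingale $S$ solving~\eqref{BSDE-sys}, a predictable strategy $\boldsymbol\theta$ of the form~\eqref{eq:strat_equ}, and with the property that $\mathcal{Z}^i=\mathcal{E}(-(\boldsymbol\gamma^i+\theta^i\boldsymbol\zeta)\cdot W)$, $\mathcal{Z}^iS$ and $\mathcal{Z}^i(\theta^i\cdot S)$ are $\prob$-martingales. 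Stacking $\boldsymbol\zeta$ on top of $\boldsymbol\gamma$ into an $\R^{(I+1)\times d}$-valued process $\boldsymbol Z$ and writing $\boldsymbol Y=(S,R^1,\dots,R^I)$, the first two lines of~\eqref{BSDE-sys} are precisely the Markovian BSDE driven by the diffusion~\eqref{eq:X}, with terminal value $\boldsymbol G=\boldsymbol g(X_1)$ and generator $\boldsymbol f$ of~\eqref{eq:f}, which has quadratic growth in $\boldsymbol z$ and is discontinuous on $\{|\boldsymbol z^0|=0\}$.

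First I would check that the data $(\boldsymbol b,\boldsymbol\sigma,\boldsymbol g,\boldsymbol f)$ meet the hypotheses of Theorem~\ref{thm:BSDE-existence}. Assumption~\ref{ass:X}(i)--(iii) provides exactly the regularity, boundedness and uniform ellipticity required of the forward coefficients; Assumption~\ref{ass:X}(iv) gives a bounded terminal condition $\boldsymbol g\in W^2_\infty(\R^d)$ together with the global integrability $\boldsymbol g\in W^2_2(\R^d)$, which supplies the $L^2$ terminal datum $\nabla\boldsymbol g$ needed for the backward-uniqueness argument, while the point $x_0$ with $|\nabla g^0(x_0)|\neq 0$ guarantees $\nabla g^0\not\equiv 0$. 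It then remains to verify that $\boldsymbol f$ of~\eqref{eq:f} satisfies the Bensoussan--Frehse structural condition and the a priori $L^\infty$ bound of Assumption~\ref{ass:BSDE}; here one uses the algebraic form of~\eqref{eq:f} --- $f^0$ is linear-quadratic, each $f^i$ is a difference of two squares --- together with the CARA structure of the economy, which furnishes the a priori bound on $S$ and on the certainty equivalents $R^i$. Granting this, Theorem~\ref{thm:BSDE-existence} produces a solution $(\boldsymbol Y,\boldsymbol Z)$ with $\boldsymbol Y$ and $\boldsymbol Z$ bounded and $|\boldsymbol Z^0|\neq 0$ a.s.-$\Id t\times\Id\prob$.

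Next I set $S\df\boldsymbol Y^0$, $R^i\df\boldsymbol Y^i$, $\boldsymbol\zeta\df\boldsymbol Z^0$ and $\boldsymbol\gamma^i\df\boldsymbol Z^i$. Because $\{|\boldsymbol\zeta_u|=0\}$ is $\Id t\times\Id\prob$-null, the indicator $1_{\{\boldsymbol\zeta_u\neq 0\}}$ in~\eqref{BSDE-sys} equals $1$ for a.e.\ $(u,\omega)$, so $(S,\boldsymbol R,\boldsymbol\zeta,\boldsymbol\gamma)$ solves~\eqref{BSDE-sys}; I then let $\boldsymbol\theta$ be given by~\eqref{eq:strat_equ} on $\{\boldsymbol\zeta_t\neq 0\}$ and by $\theta^i_t\df 1$ on the null set $\{\boldsymbol\zeta_t=0\}$, a predictable choice which, since $\tsum_i\alpha^i=1$, satisfies the clearing condition~\eqref{eq:clearing}. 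The remaining martingale conditions of Theorem~\ref{Thm:char} then follow from the boundedness of $\boldsymbol Z$: although $\theta^i$ carries a factor $|\boldsymbol\zeta_t|^{-2}$, the product $\theta^i_t\boldsymbol\zeta_t=\boldsymbol\zeta_t+(\tsum_k\alpha^k\boldsymbol\gamma^k_t-\boldsymbol\gamma^i_t)\tfrac{\boldsymbol\zeta_t^\top\boldsymbol\zeta_t}{|\boldsymbol\zeta_t|^2}$ is bounded (the last factor being an orthogonal projection), so $\boldsymbol\gamma^i+\theta^i\boldsymbol\zeta$ is bounded and $-(\boldsymbol\gamma^i+\theta^i\boldsymbol\zeta)\cdot W$ is a $\BMO$-martingale, whence $\mathcal{Z}^i$ is a true $\prob$-martingale. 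Under the equivalent measure $\mathbb{Q}^i$ with density $\mathcal{Z}^i_1$, the first line of~\eqref{BSDE-sys} combined with~\eqref{eq:strat_equ} makes $S$, and hence $\theta^i\cdot S$, a local martingale with bounded quadratic variation, therefore a true $\mathbb{Q}^i$-martingale, so that $\mathcal{Z}^iS$ and $\mathcal{Z}^i(\theta^i\cdot S)$ are $\prob$-martingales. Theorem~\ref{Thm:char} then gives that $(\boldsymbol\theta,S)$ is a Radner equilibrium, and $|\boldsymbol\zeta|=|\boldsymbol Z^0|\neq 0$ a.s.-$\Id t\times\Id\prob$ is the asserted nondegeneracy of the stock volatility.

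The substantive difficulty lies not in this reduction but inside Theorem~\ref{thm:BSDE-existence}: constructing a bounded solution of the discontinuous quadratic system --- via an approximation with continuous generators and the local Sobolev and $\BMO$ estimates of \textsection\ref{sec:approximation} --- and, above all, establishing $|\boldsymbol Z^0|\neq 0$ a.e.\ by combining unique continuation with the backward-uniqueness result Theorem~\ref{thm:BU}, applied to the parabolic system satisfied by the Markovian coefficient $\boldsymbol u$ in the representation $(\boldsymbol Y,\boldsymbol Z)=(\boldsymbol v,\boldsymbol u\boldsymbol\sigma)(\cdot,X)$, whose terminal datum $\boldsymbol u(1,\cdot)=\nabla\boldsymbol g$ has nonvanishing first row $\nabla g^0\not\equiv 0$; this is the step where the measurability (rather than openness) of the nodal set $\{\boldsymbol u^0=0\}$ and the unboundedness of the zero-order potential must be handled. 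Once those ingredients are available, the proof of Theorem~\ref{thm:equilibrium} is exactly the verification described above.
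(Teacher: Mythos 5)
Your overall reduction—feed the generator $\boldsymbol f$ of \eqref{eq:f} into Theorem~\ref{thm:BSDE-existence}, define $(S,\boldsymbol R,\boldsymbol\zeta,\boldsymbol\gamma)$ and $\boldsymbol\theta$ from the bounded Markovian solution $(\boldsymbol Y,\boldsymbol Z)$, and use boundedness of $\theta^i\boldsymbol\zeta$ via the orthogonal projection $\boldsymbol\zeta_t^\top\boldsymbol\zeta_t/|\boldsymbol\zeta_t|^2$ to secure the $\mathbb{Q}^i$-martingale properties needed in Theorem~\ref{Thm:char}—matches the paper's proof, and your handling of the martingale step is actually cleaner than the shorthand in \eqref{the-zeta}.

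However, there is a genuine gap in the step you gloss over: you assert that $\boldsymbol f$ of \eqref{eq:f} \emph{itself} satisfies Assumption~\ref{ass:BSDE}, ``here one uses the algebraic form of \eqref{eq:f}.'' It does not. The Bensoussan--Frehse condition \eqref{BF} is asymmetric: it requires $f^i(\boldsymbol z)=\boldsymbol z^i\ell^i(\boldsymbol z)+q^i(\boldsymbol z)+s^i(\boldsymbol z)$ with $|q^i(\boldsymbol z)|\leq M\sum_{j=0}^{i}|\boldsymbol z^j|^2$, i.e.\ the residual quadratic term may only involve the rows $\boldsymbol z^0,\dots,\boldsymbol z^i$. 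But for $1\leq i<I$ one has
\begin{equation*}
 f^i(\boldsymbol z)=\tfrac12\big((\boldsymbol z^0+\tsum_k\alpha^k\boldsymbol z^k-\boldsymbol z^i)\tfrac{(\boldsymbol z^0)^\top}{|\boldsymbol z^0|}\big)^2 1_{\{\boldsymbol z^0\neq0\}}-\tfrac12|\boldsymbol z^i|^2,
\end{equation*}
and after peeling off the terms of the form $\boldsymbol z^i\cdot(\text{linear})$ the leftover square contains $\big(\tsum_k\alpha^k\boldsymbol z^k\big)\tfrac{(\boldsymbol z^0)^\top}{|\boldsymbol z^0|}$, i.e.\ a quadratic in $\boldsymbol z^k$ with $k>i$. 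This violates \eqref{BF}, so a direct application of Theorem~\ref{thm:BSDE-existence} to $\boldsymbol f$ is not justified. The paper resolves this by applying an \emph{invertible linear transformation} of the system: one replaces $\boldsymbol z^i\mapsto\widetilde{\boldsymbol z}^i=\boldsymbol z^i-\boldsymbol z^I$ and $f^i\mapsto\widetilde f^i=f^i-f^I$ for $i=1,\dots,I-1$, keeping $\widetilde f^0=f^0$ and $\widetilde f^I=f^I$; in the new variables the offending cross terms cancel, $\widetilde f^i=\widetilde{\boldsymbol z}^i\ell^i(\widetilde{\boldsymbol z})-\tfrac12\widetilde{\boldsymbol z}^i(\widetilde{\boldsymbol z}^i+2\widetilde{\boldsymbol z}^I)$ so the only quadratic term not under a $\widetilde{\boldsymbol z}^i$-factor is $-\tfrac12|\widetilde{\boldsymbol z}^i|^2$, and \eqref{BF} holds. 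Condition \eqref{wAB} is likewise verified for $\widetilde{\boldsymbol f}$ (using that a positively spanning family of vectors remains positively spanning under an invertible linear map). Theorem~\ref{thm:BSDE-existence} is then applied to $\widetilde{\boldsymbol f}$, and the solution is mapped back. A secondary, smaller inaccuracy: the a priori $L^\infty$ bound is not ``furnished by the CARA structure''; it comes out of the \eqref{wAB} argument and the estimates in \cite{hXing2018}. Without the transformation step your proof is missing the substantive verification and cannot be completed as written.
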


{
When $|\boldsymbol{\zeta}|\neq 0$, we can define a $1$-dimensional Brownian motion $B$ via $B_t = \int_0^t \frac{\boldsymbol{\zeta}_u}{|\boldsymbol{\zeta}_u|} \Id W_u$. Then the volatility part of the (scaled) stock price in the first equation of \eqref{BSDE-sys} is exactly $|\boldsymbol{\zeta}| \Id B$. Therefore we call $|\boldsymbol{\zeta}|$ the total volatility for the (scaled) stock price. The total volatility for the (unscaled) stock price is $\sum_k \delta^k |\boldsymbol{\zeta}|$.
}

\begin{rem}
 Uniqueness of equilibrium remains an important open question, which we left for future investigation. As Remark \ref{rem:BSDE-uniqueness} later explains, nondegenerate stock volatility is not sufficient to ensure sufficient regularity of the BSDE generator $\boldsymbol f$ to establish the uniqueness for solutions of the BSDE system \eqref{BSDE-sys}.
\end{rem}

\section{Examples}\label{sec:examples}
Our first example is an economy with one source of randomness, i.e., $d=1$. The equilibrium market is endogenously complete. 

\begin{exa}\label{exa:complete}
Assume that the dividend $\xi$ and the endowments $\boldsymbol E$ satisfy
\[
\mathbb{E}\Big[e^{-G}\Big] < \infty, \quad \mathbb{E}\Big[e^{-G} \xi\Big] <\infty, \quad \mathbb{E}\Big[e^{-G} E^k\Big] <\infty, \, k=1, \dots I,
\]
where $G= \xi + \sum_k \alpha^k E^k$. Moreover, $e^{-G}\xi$ is Malliavin 
differentiable and its Malliavin derivative $D_{t} (e^{-G} \xi) \neq 0$ a.s. for any $t\leq 1$.

We conjecture that the equilibrium volatility $\zeta\neq 0$ almost everywhere. In this case, 
summing up all equations in \eqref{BSDE-sys} yields the following BSDE for $S + \tsum_k \alpha^k R^k$:
\begin{equation*}\label{eq:ex_1_S}
S_t + \tsum_k \alpha^k R^k_t = G - \tfrac{1}{2} \tint_t^1 (\zeta_s + \tsum_k \alpha^k \gamma^k_s)^2\ \Id s - \tint_t^1 (\zeta_s + \tsum_k \alpha^k \gamma^k_s)\ \Id W_s,
\end{equation*}
which can be solved by using an exponential transform (Cole-Hopf transform) to obtain
\begin{equation*}\label{ex1:BSDE_1}
S_t + \tsum_k \alpha^k R^k_t = -\ln \mathbb{E}_t\left[e^{-G}\right]
\end{equation*}
and $\zeta + \sum_k \alpha^k \gamma^k$ can be identified as $\beta$ from the martingale representation 
\[
d e^{-(S_t + \sum_k \alpha^k R^k_t)} = - \beta_t \, e^{-(S_t+\sum_k\alpha^kR^k_t)}\ \Id W_t, 
\quad e^{- (S_1 + \sum_k \alpha^k R^k_1)} = e^{-G}.
\]

We now introduce an equivalent measure $\tilde{\mathbb{P}}\sim\mathbb{P}$ via $\Id \tilde{\mathbb{P}}/\Id \mathbb{P}|_{\mathcal{F}_t} = \mathbb{E}_t[e^{-G}] / \mathbb{E}[e^{-G}]$. Then Girsanov's theorem yields that $\tilde{W}:=W + \int_0^\cdot \beta_s \ \Id s$ is a $\tilde{\mathbb{P}}$-Brownian motion. It then follows immediately from \eqref{BSDE-sys} that
\[
S_t = \tilde{\mathbb{E}}_t[\xi] \quad \text{and} \quad  S_t + \tsum_k\alpha^kR^k_t - R^i_t = \tilde{\mathbb{E}}_t[\xi + \tsum_k \alpha^kE^k - E^i],
\]
meanwhile $\zeta$ and $\zeta + \tsum_k\alpha^k\gamma^k - \gamma^i$ can be identified via above martingale representations under $\tilde{\mathbb{P}}$. Solving above equations, we can obtain $R^i$ and $\gamma^i$ for each $i$. On the other hand, it follows from the Clark-Ocone formula that 
\[
 \zeta_t = \frac{\mathbb{E}_t \big[D_t(e^{-G} \xi)\big]}{\mathbb{E}[e^{-G}]}.
\]
Therefore the process $\zeta$ is nonzero almost everywhere from our assumption. Finally, one can verify $\mathcal{Z}^i_t = \Id \tilde{\mathbb{P}}/\Id \mathbb{P}|_{\mathcal{F}_t}$, moreover $\mathcal{Z}^i S$ and $\mathcal{Z}^i (\theta^i \cdot S)$ are $\mathbb{P}$-martingales for all $i=1, \dots, I$. Therefore the solution of \eqref{BSDE-sys} constructed above identifies an equilibrium thanks to Theorem \ref{Thm:char}.
\end{exa}

{
In the second example, there are two sources of randomness and agents' random endowments are the sum of a hedgeable and a non-hedgeable components. In equilibrium agents trade the risky asset to exchange their hedgeable risk and shoulder their own non-hedgable risk. 

\begin{exa}\label{exa:nonhed}
Consider the case $d=2$ with $W= (W^1, W^2)$, which are independent 1-dimensional Brownian motions. Assume that the (scaled) dividend $\xi\in \mathcal{F}_1^{W^1}$ and the (scaled) random endowment $E^i$ can be decomposed as $E^k = E^{k,1} + E^{k,2}$ with $E^{k,1}\in \mathcal{F}^{W^1}_1$ and $E^{k,2} \in \mathcal{F}_1^{W^2}$ for each $k$. Here $\mathcal{F}^{W^i}$ is the filtration generated by the Brownian motion $W^i$. We assume that $\xi$ and $E^{k,1}$ satisfy the non-degeneracy, integrability and Malliavin differentiability assumptions in Example \ref{exa:complete}, moreover, 
\[
 \mathbb{E}\Big[e^{- E^{k,2}}\Big]<\infty, \quad k=1, \dots, I.
\]

In this case, the system of BSDEs \eqref{BSDE-sys} admits an explicit solution $(S, \boldsymbol{R}, \boldsymbol{\zeta}, \boldsymbol{\gamma})$ which satisfy 
\[
 \boldsymbol{\zeta} = (\zeta^1, 0), \quad R^i = R^{i,1} + R^{i,2}, \quad \boldsymbol{\gamma}^i = \big(\gamma^{i,1}, \gamma^{i,2}\big).
\]
Here $(S, R^{i,1}, \zeta^1, \gamma^{i,1})_{i=1,\dots, I}$ forms an complete market in the sub-filtration $\mathcal{F}^{W^1}$. $(R^{i,2}, \gamma^{i,2})$ solves the following BSDE
\[
 R^{i,2}_t =E^{i,2} - \tfrac12 \tint_t^1 |\gamma^{i,2}_u|^2 du - \tint_t^1 \gamma^{i,2}_u dW^2_u.
\]
The process $R^{i,2}$ is the (scaled) certainty equivalent for the agent $i$ to shoulder the unhedgeable random endowment $E^{i,2}$. 
\end{exa}
}

Our third example features Gaussian dividend and endowments. There exist a closed form Radner equilibrium with an incomplete market. One can consider this example as the terminal consumption analogue of \cite{Lar12}.
\begin{exa}\label{exa:Gaussian}
Consider (scaled) dividend and endowments of the form
\[
 \xi = \boldsymbol b^0 \, W_1 \quad \text{and} \quad E^i = \boldsymbol b^i \, W_1, \quad i=1, \dots, I,
\]
where $\boldsymbol b^0$ and $\boldsymbol b^i$, $i=1, \dots, I$, are all constant $d$-dimensional (row) vectors with $|\boldsymbol b^0|\neq 0$, and $W_1$ is the time 1 value of a $d$-dimensional Brownian motion $W$. In this case, the BSDE system \eqref{BSDE-sys} admits an explicit solution 
\[
 \boldsymbol \zeta \equiv \boldsymbol b^0, \quad \boldsymbol \gamma^i \equiv \boldsymbol b^i, \quad i=1, \dots, I,
\]
\begin{align*}
 S_t =& (t-1) \big(\tsum_k \alpha^k \boldsymbol b^k + \boldsymbol b^0\big) (\boldsymbol b^0)^\top + \boldsymbol b^0 \, W_t,\\
 R^i_t = & (t-1) \Big[-\tfrac12 \Big(\big(\boldsymbol b^0 + \tsum_k \alpha^k \boldsymbol b^k - \boldsymbol b^i\big)\tfrac{(\boldsymbol b^0)^\top}{|\boldsymbol b^0|}\Big)^2 + \tfrac12 |\boldsymbol b^i|^2\Big] + \boldsymbol b^i \, W_t, \quad i=1,\dots, I.
\end{align*}
Agent's (scaled) optimal investment strategy is 
\begin{equation}\label{exa:opt_the}
 \theta^i = \frac{(\boldsymbol b^0 + \tsum_k \alpha^k \boldsymbol b^k) (\boldsymbol b^0)^\top}{|\boldsymbol b^0|^2} - \frac{\boldsymbol b^i (\boldsymbol b^0)^\top}{|\boldsymbol b^0|^2}, \quad i=1, \dots, I.
\end{equation}
Observe that 
\[
(\boldsymbol b^0 + \tsum_k \alpha^k \boldsymbol b^k) (\boldsymbol b^0)^\top = \text{Cov} (\xi + \tsum_k \alpha^k E^k, \xi), \quad \boldsymbol b^i (\boldsymbol b^0)^\top = \text{Cov} (E^i, \xi), \quad |\boldsymbol b^0|^2 = \text{Var} (\xi).
\]
Reverse the scaling in \eqref{scaling}. Let $\widetilde{S} = \tsum_k \delta^k S, \widetilde{\xi} = \tsum_k \delta^k \xi, \widetilde{E}^i = \delta^i E^i$, and $\widetilde{\theta}^i = \alpha^i \theta^i$ be the (unscaled) stock price, dividend, endowment, and investment strategy, respectively.  Then the (unscaled) stock price has an expected return and variance
\begin{equation}\label{tmu}
\widetilde{ \mu} = \tfrac{1}{\sum_k \delta^k} \text{Cov} \big(\tsum_k \widetilde{E}^k + \widetilde{\xi}, \widetilde{\xi}\big), \quad \tilde{\boldsymbol \zeta} \tilde{\boldsymbol \zeta}^\top = \text{Var}(\widetilde{\xi}),
\end{equation}
respectively. Agent's (unscaled) optimal investment strategy is 
\begin{equation}\label{ttheta}
 \widetilde{\theta}^i = \delta^i \frac{\widetilde{\mu}}{\tilde{\boldsymbol \zeta} \tilde{\boldsymbol \zeta}^\top} - \frac{\text{Cov}(\widetilde{\xi}, \widetilde{E}^i)}{\text{Var}(\widetilde{\xi})}.
\end{equation}

Results in \eqref{tmu} and \eqref{ttheta} have several economic implications, which are similar to the economic results for equilibrium with intertemploral consumption in \cite{Lar12}:
\begin{enumerate}
\item[(i)] Agent's optimal investment strategy \eqref{ttheta} can be decomposed as a \emph{mean-variance efficient component} $\delta^i \, \widetilde{\mu}/ (\tilde{\boldsymbol \zeta} \tilde{\boldsymbol \zeta}^\top) $ and a \emph{hedging component} $\text{Cov}(\widetilde{\xi}, \widetilde{E}^i)/ \text{Var}(\widetilde{\xi})$. When the covariance between endowment and stock dividend is positive, investing in stock is risky, because its final payoff is likely to co-move with agent's endowment. Therefore, agent reduces her holding in stock. Meanwhile, when the covariance between endowment and stock dividend is negative, agent's hedging component in positive. Agent uses additional position in stock to hedge randomness in her endowment.

\item[(ii)] When the covariance between the aggregated endowment and the stock dividend increases, stock expected return $\widetilde{\mu}$ increases to compensate the reduced demand from agents.  When aggregated endowment is deterministic, expected return is $\text{Var}(\widetilde{\xi}) / \tsum_k \delta^k$, which is the variance of dividend normaized by the aggregated risk tolerance of all agents. 

\item[(iii)] { Consider a complete market benchmark with a representative agent whose aggregate (unscaled) endowment is $\widetilde{G}= \widetilde{\xi} + \sum_k \widetilde{E}^k$. The (unscaled) stock price in the complete market is given by 
\begin{equation}\label{com-S}
 S^{\text{com}}_t = \mathbb{E}^{\mathbb{Q}}_t [\xi], \quad \text{where } \frac{d\mathbb{Q}}{d\prob} \Big|_{\mathcal{F}_t} = \frac{\expec_t\big[e^{-\widetilde{G}/\sum_k \delta^k}\big]}{\expec\big[e^{-\widetilde{G}/\sum_k \delta^k}\big]}.
\end{equation}
The expected return and variance of $S^{\text{com}}$ is the same as in \eqref{tmu}. This is observed in \cite{Lar14} in equilibrium models among CARA agents with intertemporal consumption.
}

\end{enumerate}
\end{exa}

{
When the endowments are nonlinear in the state variables, our last example below shows numerically that the equilibrium quantities in incomplete markets can be different from their complete counterpart. 

\begin{exa}\label{exa:option}
 Consider an economy with two sources of randomness $(W^1, W^2)$, which are independent 1-dimensional Brownian motions. The consumption good produced at time $1$ is assumed to be $\sigma W^1_1 + \sigma W^2_1$. We interpret the first source of randomness $W^1$ as the production and the second source $W^2$ as the weather. There is only one risky asset, whose (unscaled) dividend is the consumption good produced at time $1$, i.e., $\widetilde{\xi}= \sigma W^1_1 + \sigma W^2_1$. Therefore the market is incomplete. Two agents with CARA utilities trade in this market.  Agent 1 holds $N$ units of put options on the weather risk with (unscaled) payoff $P(W^2_1) = -\min\{\sigma W^2_1,0\}$, so that $\widetilde{E}^1 = N P(W^2_1)$; Agent 2 is the seller of options so that $\widetilde{E}^2 = - \widetilde{E}^1$. 
 
Consider $X=(\sigma W^1, \sigma W^2)'$ as the state variable, we solve the system of PDEs associated to (8) to obtain the numeric results in Figures \ref{Figure1} and \ref{Figure2}. 

In both figures, the complete market benchmark is calculated via \eqref{com-S}.  When $x_2$ is far away from $0$, the put option is either in-the-money or out-of-the-money, put option payoffs in these regimes are linear. As $x_2$ moves further away from $0$, the incomplete equilibrium converges to the situation in Example \ref{exa:Gaussian}, whose equity premium and total volatility are the same as their complete market counterparts. This is confirmed by the left and right tails in the left and middle panels of Figures \ref{Figure1} and \ref{Figure2}. When the put option is at-the-money, i.e., $x_2$ close to $0$, the nonlinearity of option payoff impacts the equilibrium quantities. The equity premium in the incomplete market is larger than its complete market counterpart in both figures, but the change in the total volatility is negligible. When $x^2$ moves away from $0$, each agent's (unscaled) optimal holding $\widetilde{\theta}^i$, $i=1,2$, also converges to its counter-part in the linear payoff case in Example \ref{exa:Gaussian}. Using \eqref{exa:opt_the}, we obtain 
\[
 \widetilde{\theta}^1 \rightarrow \left\{\begin{array}{ll} \alpha^1 + \tfrac{N}{2}, & x^2 \rightarrow -\infty\\ \alpha^1, & x^2 \rightarrow \infty\end{array}\right. \quad \text{and} \quad \widetilde{\theta}^2 \rightarrow \left\{\begin{array}{ll} \alpha^2 - \tfrac{N}{2}, & x^2 \rightarrow -\infty\\ \alpha^2, & x^2 \rightarrow \infty\end{array}\right..
\]
Here $\alpha^i$ is the Pareto-optimal holding for the agent $i$ and $\pm \frac{N}{2}$ is the hedging component.
When $x_2$ is negative, the put option holder chooses a positive hedging component in the risky asset to hedge the put option payoff; meanwhile the put option seller takes a negative hedging component. As $x_2$ increases to be positive, the put option becomes out-of-the-money. The hedging components vanish for both agents and their optimal holdings are determined by the Pareto-optimal holding.

Figure \ref{Figure1} shows the equilibrium quantities when the number of put options is either $N=1$ or $N=2$. The larger $N$ is, the left and middle panels show that the larger deviations in the equity premium and the total volatility from their complete market counter-parts.

\begin{figure}[h]
\centering
\includegraphics[scale=0.45]{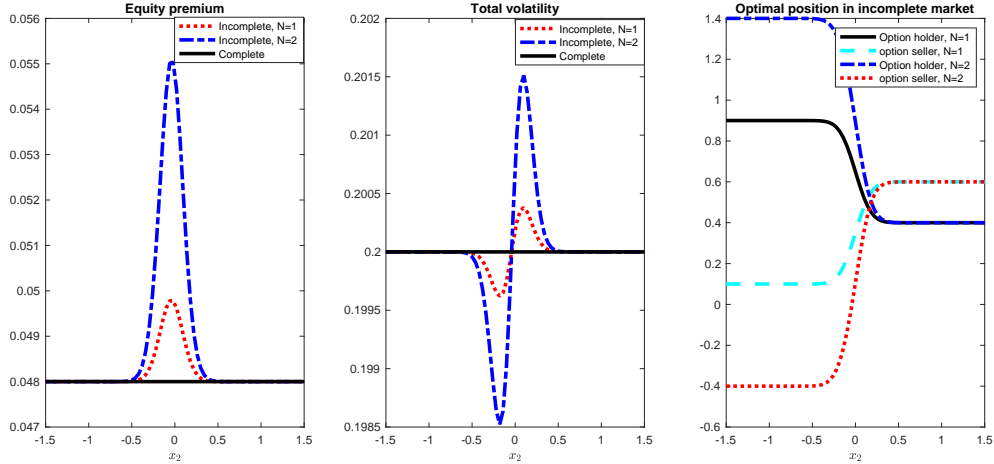}
\caption{{\textbf{Equilibrium quantities with different option leverage} This figure presents the equilibrium equity premium, stock total volatility $\sum_k \delta^k |\boldsymbol{\zeta}|$, and optimal positions for both agents at time $0$ in both incomplete and complete markets, when the number of put options is either $N=1$ or $N=2$. The parameters are $T=1$, $\delta^1 =\frac13$, $\delta^2=\frac12$, and $\sigma = \frac{20\%}{\sqrt{2}}$ so that the total volatility of $\widetilde{\xi}$ is $20\%$.}}	\label{Figure1}
\end{figure}

Figure \ref{Figure2} plots the equilibrium quantities when the risk tolerance for the option holder is $\delta^1= \frac13$ or $\frac12$ and the risk tolerance for the option seller is $\delta^2 = \frac12$. In the left panel when the put option holder is less risk tolerant, the risk tolerance of the representative agent is smaller. Hence the equity premium is higher in the complete market. Moreover, the deviations of the equity premium and total volatility in incomplete markets from their complete market counter-parts are also bigger when the option holder is less risk tolerant.

\begin{figure}[h]
\centering
\includegraphics[scale=0.45]{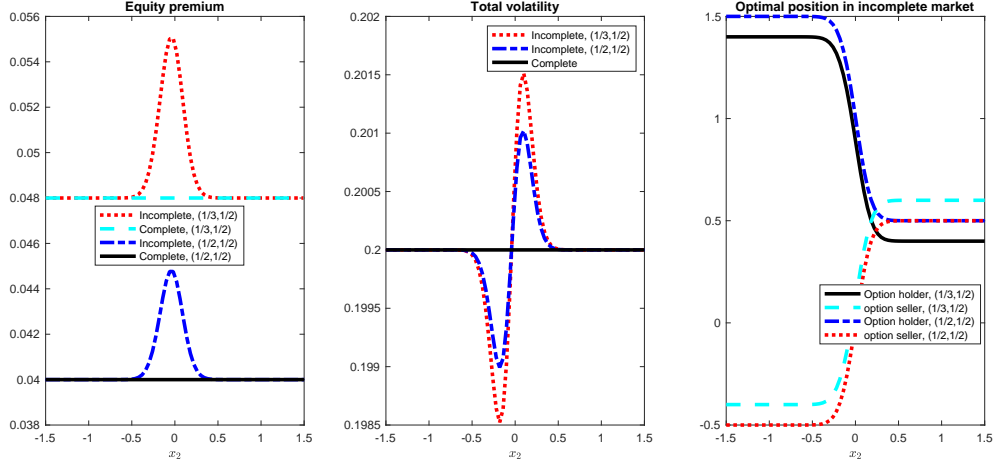}
\caption{{\textbf{Equilibrium quantities with different risk profile} This figure presents the equilibrium equity premium, stock total volatility, and optimal positions for both agents at time $0$ in both incomplete and complete markets when the risk tolerance for the option holder is $\delta^1 = \frac13$ or $\frac12$. The parameters are $T=1, N=2$,  $\delta^2=\frac12$, and $\sigma = \frac{20\%}{\sqrt{2}}$.}}	\label{Figure2}
\end{figure}

\end{exa}
}

\section{A quadratic BSDE system with discontinuous generators}\label{sec:BSDE-main}

Motivated by the equilibrium problem in the previous section, we consider a class of systems of Markovian BSDEs, which contains the system \eqref{BSDE-sys} as a special case.  Given the process $X$ satisfying \eqref{eq:X}, we seek a pair of processes $(\boldsymbol Y, \boldsymbol Z)$ which satisfies 
\begin{equation}\label{eq:P}
\boldsymbol Y_t = \boldsymbol g(X_1) + \int_t^1 \boldsymbol f(\boldsymbol Z_s)\ \Id s - \int_t^1 \boldsymbol Z_s\ \Id W_s
\end{equation}
and such that the process $\boldsymbol Y$ is continuous, $\int_t^1 \boldsymbol f(\boldsymbol Z_s)\ \Id s$ and $\int_t^1 |\boldsymbol Z_s|^2\ \Id s$ finite a.s., for all $t\in [0,1]$. We call such a pair of processes a \emph{solution} of the BSDE system \eqref{eq:P}.

Due to the Markovian nature of the equation we exploit the duality between systems of BSDEs and systems of semilinear PDEs in order to prove the existence of a solution $(\boldsymbol Y,\boldsymbol Z)$ to \eqref{eq:P}. Specifically, we will construct a sufficiently regular function $\boldsymbol v:[0,1]\times\R^d \to \R^{I+1}$, such that
\[
\boldsymbol Y_t = \boldsymbol v(t,X_t) \quad \text{and} \quad \boldsymbol Z_t = (\nabla \boldsymbol v \boldsymbol \sigma)(t,X_t),
\]
which solves the Cauchy problem
\begin{equation}\label{v-pde-sys}
\left\{
\begin{aligned}
&\begin{aligned}
\frac{\partial v^i}{\partial t} + \mathcal{L} v^i + f^i(\nabla  \boldsymbol v \boldsymbol \sigma) &= 0, &&(t,x)\in [0,1) \times \R^d,\\
v^i(1,\cdot) &= g^i, && x\in \R^d, \, i=0, \dots, I,
\end{aligned}
\end{aligned}
\right.
\end{equation}
where
\begin{equation}\label{def:L}
\mathcal{L} :=\frac12 \sum_{j,k=1}^d A^{jk}(t,x)\frac{\partial^2}{\partial x^j \partial x^k} + \sum_{j=1}^d b^j(t,x)\frac{\partial}{\partial x^j} \quad \text{and} \quad \boldsymbol A := \boldsymbol \sigma \boldsymbol \sigma^\top.
\end{equation}

Let $\boldsymbol z^0$ be the first row of $\boldsymbol z\in \R^{(I+1)\times d}$. Define
 $\Xi$ a
\[
\Xi = \{ \boldsymbol z\in \R^{(I+1)\times d} : |\boldsymbol z^0| = 0\}.
\]
We assume the following conditions on $\boldsymbol f$:

\begin{ass}\label{ass:BSDE}
$\,$
\begin{enumerate}
	\item[(i)] The function $f^0$  is zero on $\Xi$ and it is locally Lipschitz continuous on $\R^{(I+1)\times d}$, for $i= 1, \dots, I$, $f^i$ is locally Lipschitz continuous on $\R^{(I+1)\times d} \setminus \Xi$: for every compact set $K \subset \R^{(I+1)\times d}$ (if $i= 0$) or $K \subset \R^{(I+1)\times d}\setminus \Xi$ (if $i=1,\dots,I$) there is a constant $L_{f, K}$ such that
	\[
	 |f^i(\boldsymbol z_1) - f^i(\boldsymbol z_2)| \leq L_{f, K} |\boldsymbol z_1 - \boldsymbol z_2|, \quad \text{ for any } \boldsymbol z_1, \boldsymbol z_2 \in K.
	\] 
	\item[(ii)] For each $i=0, 1, \dots, I$, the function $f^i$ admits a decomposition of the form 
	\[
	 f^i(\boldsymbol z) = \boldsymbol z^i \ell^i(\boldsymbol z) + q^i(\boldsymbol z) + s^i(\boldsymbol z),
	\]
	such that for all $\boldsymbol z\in \R^{(I+1)\times d}$ the functions $\ell^i : \R^{(I+1)\times d} \rightarrow \R^d$, $q^i: \R^{(I+1)\times d} \rightarrow \R$ and $s^i: \R^{(I+1)\times d} \rightarrow \R$ satisfy
	\begin{equation}\label{BF}\tag{BF}
	\begin{split}
	 |\ell^i(\boldsymbol z)| &\leq M \, |\boldsymbol z|,\\
	 |q^i(\boldsymbol z)| & \leq M \, \tsum_{j=0}^{i} |\boldsymbol z^j|^2,\\
	 |s^i(\boldsymbol z)| & \leq \kappa (|\boldsymbol z|),
	\end{split}
	\end{equation}
	for some constant $M$ and a locally bounded function $\kappa: [0,\infty)\rightarrow [0,\infty)$ which satisfies $\lim_{r\rightarrow \infty} \kappa(r)/r^2 =0$.
	\item[(iii)] There exist a  sequence of functions $L^k: \R^{(I+1)\times d}\rightarrow \R^d$, $k=1, \dots, K$, with $K>I+1$, which satisfy $|L^k(\boldsymbol z)| \leq L_0 (1+ |\boldsymbol z|)$ for some constant $L_0$, and a sequence of nonzero vectors $\boldsymbol a_1, \dots, \boldsymbol a_K \in \R^{I+1}$  which positively span\footnote{A sequence of nonzero vectors $\boldsymbol a_1, \dots, \boldsymbol a_K \in \R^{I+1}$ positively span $\R^{I+1}$ if for each $\boldsymbol a\in \R^{I+1}$ there exist nonnegative constants $\lambda_1, \dots, \lambda_K$ such that $\lambda_1 \boldsymbol a_1 + \cdots + \lambda_K \boldsymbol a_K = \boldsymbol a$.} $\R^{I+1}$, such that 
	\begin{equation}\label{wAB}\tag{wAB}
	 \boldsymbol a_k^\top \boldsymbol f(\boldsymbol z) \leq \frac12 |\boldsymbol a_k^\top \boldsymbol z|^2 + \boldsymbol a_k^\top \boldsymbol z L^k(\boldsymbol z), 
	\end{equation} 
	for all $\boldsymbol z\in \R^{(I+1)\times d}$  and  $k = 1, \dots, K$.
	\item[(iv)] The first order derivative $\nabla  f^0$, the $(I+1)\times d$ matrix $(\partial_{z^{ij}} f^0)$, is locally Lipschitz continuous on $\R^{(I+1)\times d}$. For $i=0, 1, \dots, I$, let $(\nabla  f^0)^i$ be the $(i+1)$-th row of $\nabla f^0$. There exists a constant $M$ such that, for any $\boldsymbol z\in \R^{(I+1)\times d}$,
	\begin{equation}\label{nabf}
	\begin{split}
	|(\nabla  f^0)^0(\boldsymbol z)| &\leq M \,|\boldsymbol z|,\\
	|(\nabla  f^0)^i(\boldsymbol z)| &\leq M\, |\boldsymbol z^0|,\quad  i = 1,\ldots,I.
	\end{split}
	\end{equation}
\end{enumerate}	
\end{ass}

Assumption \ref{ass:BSDE} (ii) shows that its generator $\boldsymbol{f}$ has quadratic growth in $\boldsymbol{z}$. BSDEs of this type need structural conditions on $\boldsymbol{f}$ to ensure its wellposedness (see \cite{FreRei11}). The structural condition \eqref{BF} was discovered by \cite{Bensoussan-Frehse}. Condition  \eqref{wAB} was proposed in \cite{hXing2018} to provide a-priori $L^\infty$-bound of the solution $\boldsymbol v$ to \eqref{eq:P}. These two conditions combined provide H\"{o}lder estimates for $\boldsymbol v$; see \cite{Bensoussan-Frehse} and \cite{hXing2018}. 

In contrast to aforementioned literature, major difficulty raises in our current situation due to the discontinuity of $f^i$ at $\Xi$ for $i=1, \dots, I$. To construct a solution to \eqref{eq:P} or \eqref{v-pde-sys}, one typically approximates the discontinuous $f^i$ by a sequence of well-behaved continuous functions $(f^i_n)_{n\in \mathbb{N}}$. After establishing wellposedness for the approximating systems with the generator $\boldsymbol f_n$ and obtaining the solution $\boldsymbol v_n$, one aims to construct a limit $\boldsymbol v$ from the sequence $(\boldsymbol v_n)_{n\in \mathbb{N}}$. In order to show that $\boldsymbol v$ is indeed a solution to the system \eqref{v-pde-sys}, one needs to prove that the nonlinear term converges almost everywhere, that is to say that 
\begin{equation}\label{f-conv}
 f^i_n(x, \nabla \boldsymbol v_n \boldsymbol \sigma) \rightarrow f^i(\nabla \boldsymbol v\,  \boldsymbol \sigma) \quad \text{almost everywhere},
\end{equation}
for any $i=1, \dots, I$. Due to the discontinuity of $f^i$ at $\Xi$, in order to establish the convergence in \eqref{f-conv}, we will prove a backward uniqueness result in \textsection 7. It shows that, when $|\nabla  g^0(x_0)|\neq 0$ for some point $x_0$ (see Assumption \ref{ass:X} (iv)), then  
\begin{equation}\label{non-deg}
 |\nabla   v^0| \neq 0 \quad \text{almost everywhere on } [0,1]\times \R^d.
\end{equation}
To establish \eqref{non-deg}, we need to establish global integrability of $\boldsymbol v$ and $\nabla \boldsymbol v$. This requires the global integrability of $\boldsymbol g$ in Assumption \ref{ass:X} (iv) and that the growth bounds on the right-hand sides of \eqref{BF} and \eqref{nabf} do not have additive constants. 

We now present our main existence result for the system of BSDEs \eqref{eq:P}.

\begin{thm}\label{thm:BSDE-existence}
 Under Assumption \ref{ass:X} and \ref{ass:BSDE}, the multidimensional BSDE \eqref{eq:P} admits a Markovian solution $\boldsymbol Y_t = \boldsymbol v(t,X_t)$ and $\boldsymbol Z_t = (\nabla \boldsymbol v \boldsymbol \sigma)(t,X_t)$, $t\in [0,1]$, where $\boldsymbol v\in L^\infty \cap W^{1,2}_2 \cap  W^{1,2}_{d+2} ((0,1) \times \mathbb{R}^d)$ and $\nabla \boldsymbol v\in L^\infty ((0,1)\times \mathbb{R}^d)$. Moreover, the set 
 \[
 \{(t, x)\in [0,1]\times \R^d\,:\, |\nabla \boldsymbol v^0| = 0 \}
 \] 
 has Lebesgue measure zero.  
\end{thm}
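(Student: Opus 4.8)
The plan is to split the proof of Theorem~\ref{thm:BSDE-existence} into two largely independent parts: an existence/regularity part producing the Markovian solution with bounded $\boldsymbol v$ and $\nabla\boldsymbol v$ and the claimed Sobolev integrability, and a nodal-set part showing $\{|\nabla v^0|=0\}$ has measure zero. For the first part I would proceed by approximation. First, mollify the discontinuity: replace $\boldsymbol f$ by a sequence $\boldsymbol f_n$ of globally Lipschitz (in the relevant sense), continuous generators that agree with $\boldsymbol f$ off a shrinking neighbourhood of $\Xi$ and still satisfy uniform versions of \eqref{BF}, \eqref{wAB} and \eqref{nabf} with $n$-independent constants — this is possible because $f^0$ is already continuous and the only issue is $f^i$, $i\geq1$, near $|\boldsymbol z^0|=0$, where the quadratic term can be truncated. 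For each $n$ the system \eqref{v-pde-sys} with $\boldsymbol f_n$ admits a solution $\boldsymbol v_n$ by the theory of \cite{hXing2018}: condition \eqref{wAB} gives the uniform $L^\infty$ bound on $\boldsymbol v_n$, and \eqref{BF} combined with that bound gives uniform $C^{\gamma/2,\gamma}$ estimates on $\boldsymbol v_n$. The new ingredient here is the local Sobolev estimate on the unbounded domain announced in the introduction (the analogue of \cite[Prop.~5.1]{Bensoussan-Frehse}): I would differentiate the equation, test against $\nabla v_n^0$, use \eqref{nabf} together with the BMO bound on $\boldsymbol Z_n\cdot W$, and the global $W^2_2$-integrability of $\boldsymbol g$ from Assumption~\ref{ass:X}(iv), to obtain a uniform bound for $\boldsymbol v_n$ in $W^{1,2}_2\cap W^{1,2}_{d+2}((0,1)\times\R^d)$; Sobolev embedding then yields the uniform $L^\infty$ bound on $\nabla\boldsymbol v_n$.

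With these uniform estimates, a subsequence of $\boldsymbol v_n$ converges to some $\boldsymbol v$ in $C^{\gamma/2,\gamma}_{loc}$ and weakly in $W^{1,2}_{d+2,loc}$, with $\nabla\boldsymbol v$ bounded. To conclude that $\boldsymbol v$ solves \eqref{v-pde-sys} with the original $\boldsymbol f$, the only delicate point is the a.e.\ convergence of the nonlinear term, i.e.\ \eqref{f-conv}. Away from $\Xi$, $\boldsymbol f$ is locally Lipschitz and $\nabla\boldsymbol v_n\to\nabla\boldsymbol v$ strongly on compacts (upgraded from weak convergence using the equation and the $W^2_{d+2}$ bound), so convergence holds on $\{|\nabla v^0|\neq 0\}$. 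Hence the whole argument reduces to showing that $\{(t,x):|\nabla v^0(t,x)|=0\}$ is Lebesgue-null: on that set both $\boldsymbol f_n(\nabla\boldsymbol v_n\boldsymbol\sigma)$ and $\boldsymbol f(\nabla\boldsymbol v\boldsymbol\sigma)$ vanish (for $f^0$ by Assumption~\ref{ass:BSDE}(i), and for $f^i$ because, once we know $|\nabla v^0|=0$ a.e.\ there, the problematic term $(\cdots)\tfrac{(\boldsymbol z^0)^\top}{|\boldsymbol z^0|}\mathbf 1_{\{\boldsymbol z^0\neq0\}}$ is controlled). So proving \eqref{non-deg} simultaneously fixes the convergence issue and establishes the last assertion of the theorem.

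For the nodal-set part I would argue by contradiction using Theorem~\ref{thm:BU} together with the unique continuation property. Suppose $\{|\nabla v^0|=0\}$ has positive measure. The function $\boldsymbol u:=\nabla v^0$ (an $\R^d$-valued, or rather after accounting for the structure, a vector-valued function) satisfies, by differentiating the scalar equation for $v^0$ in space, a backward parabolic differential inequality of the form \eqref{BU-ex}: the leading coefficients are $\tfrac12\boldsymbol A$ (variable, time-dependent, satisfying Assumption~\ref{ass:X}(i)--(iii)), the first-order drift term comes from $\nabla\boldsymbol b$ and from the $\boldsymbol z^0\ell^0$ and $q^0$ parts of $f^0$ differentiated — bounded because $\nabla\boldsymbol v$ is bounded — and crucially the zero-order potential is only in $L^\infty+L^{d+2}$, coming from the second spatial derivatives of $v^0$ (which live in $L^{d+2}$ by the Sobolev estimate) multiplied by $\nabla\boldsymbol\sigma$; this is exactly why the unbounded-potential version of backward uniqueness in Theorem~\ref{thm:BU} is needed. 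First, by a standard measure-theoretic / Lebesgue-density argument, positivity of $|\{\boldsymbol u=0\}|$ forces $\boldsymbol u(\tau,\cdot)$ to vanish on a set of positive measure in space for some time $\tau<1$, and then — this is where the parabolic unique continuation result extended from \cite{Regbaoui} and \cite{EscauriazaFernandezVessella} enters — $\boldsymbol u(\tau,\cdot)\equiv 0$ on all of $\R^d$. Then Theorem~\ref{thm:BU} (backward uniqueness) propagates this to $\boldsymbol u\equiv 0$ on $[\tau,1]\times\R^d$, in particular $\nabla g^0=\boldsymbol u(1,\cdot)\equiv 0$, contradicting Assumption~\ref{ass:X}(iv) that $|\nabla g^0(x_0)|\neq0$. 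The main obstacle, and the technical heart of the paper, is establishing the Carleman-type inequality underlying Theorem~\ref{thm:BU} with a genuinely unbounded $L^{d+2}$ potential and variable time-dependent leading coefficients simultaneously — the extra term in Lemma~\ref{T: teormea2} is what makes this work — together with verifying that $\nabla v^0$ really does have the regularity ($W^{1,2}_{d+2}$, bounded gradient, the right decay at infinity coming from global integrability of $\boldsymbol g$) required to apply that machinery on the unbounded domain $\R^d$ rather than on a bounded cylinder.
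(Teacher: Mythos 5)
Your overall blueprint coincides with the paper's: regularise $\boldsymbol f$ into Lipschitz generators $\boldsymbol f_n$ preserving \eqref{BF} and \eqref{wAB}, extract $n$-uniform $L^\infty$, H\"older, and $W^{1,2}_{2}\cap W^{1,2}_{d+2}$ bounds, pass to a locally uniformly convergent subsequence, reduce the a.e.\ convergence of the nonlinear term to \eqref{non-deg}, and prove \eqref{non-deg} by contradiction using Lemma~\ref{lem:Bu-sys} and Theorem~\ref{thm:BU} together with $|\nabla g^0(x_0)|\neq 0$. So the high-level structure is correct.

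Two intermediate mechanisms are sketched differently from what the paper actually does, and as written they would not carry through. For the Sobolev bound you propose differentiating the equation, testing against $\nabla v^0_n$, invoking \eqref{nabf} and a BMO bound on $\boldsymbol Z_n\cdot W$; none of these appear in the paper's derivation. The local estimate (Proposition~\ref{prop:sob_int}) follows the Bensoussan--Frehse route: multiply by a cut-off, integrate $|\nabla\tilde{\boldsymbol v}|^{2p}$ by parts, and absorb the quadratic gradient via the uniform H\"older modulus of $\boldsymbol v_n$; the global bound (Proposition~\ref{prop:global-sobolev}) then comes from comparison with a linear auxiliary Cauchy problem after a Girsanov change of measure, combined with the interpolation inequality. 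Condition~\eqref{nabf} is used only later, in Lemma~\ref{lem:Bu-sys}, to show that $\boldsymbol u=\nabla v^0$ satisfies the differential inequality \eqref{u-diff-bd} with an $L^\infty+L^{d+2}$ potential; it plays no role in the Sobolev estimate, and attempting to differentiate the equation at that stage is circular since $W^{1,2}_2$-regularity of $\nabla v^0_n$ is precisely what is being established. For the nodal-set step your reduction (Fubini to a time slice $\tau$ on which $\boldsymbol u(\tau,\cdot)$ vanishes on a spatial set of positive measure, then spatial unique continuation at time $\tau$) is not the route taken in \textsection\ref{sec:bu}: the proof of Theorem~\ref{thm:BU} works directly at a spacetime Lebesgue point of $\{\boldsymbol u=0\}$, uses the interpolation bound \eqref{u-norm} to show a zero of infinite order in the parabolic metric, then applies the Carleman inequality of Lemma~\ref{L:Carlemanclasica} to get $\boldsymbol u(0,\cdot)\equiv 0$, and finally Lemma~\ref{T: teormea1} to propagate forward. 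The time-slice version would need a unique-continuation statement for fixed-time traces that is not what is proved here. Finally, you omit the last step of the paper's proof: since $\boldsymbol v$ is only in $W^{1,2}_{d+2}$ rather than $C^{1,2}$, Krylov's It\^{o} formula is required to pass from the a.e.\ PDE solution to the asserted BSDE solution $(\boldsymbol Y,\boldsymbol Z)$.
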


\begin{rem}\label{rem:BSDE-uniqueness}
 Uniqueness result currently is out of reach and is left for future investigation. This is because the generator $\boldsymbol f$ lacks the local Lipschitz property 
 \begin{equation}\label{f-ll}
  \big| \boldsymbol f(\boldsymbol z) - \boldsymbol f (\tilde{\boldsymbol  z})\big| \leq C (1+|\boldsymbol z| + |\tilde{\boldsymbol  z}|) |\boldsymbol z - \tilde{\boldsymbol  z}|, \quad \text{ for any } \boldsymbol z, \tilde{\boldsymbol  z},
 \end{equation}
 for some constant $C$. This local Lipschitz property is needed to compare two solutions and is assumed for the uniqueness result in \cite[Theorem 2.14]{hXing2018}. In our equilibrium application \eqref{BSDE-sys}, a sufficient condition for \eqref{f-ll} is $|\boldsymbol \zeta|$ bounded uniformly away from zero, which is difficult to establish.
\end{rem}

{
\begin{rem}
 The statement of Theorem \ref{thm:BSDE-existence} remains valid when the set of discontinuity $\Xi$ is identified via several rows of $\boldsymbol{z}$, rather than the first one, i.e., there exists $N<I$ such that without loss of generality $\Xi = \{\boldsymbol{z} \in \Real^{(I+1)\times d} \,:\, |\boldsymbol{z}^n|=0, n=0,\dots, N\}$. Then the second condition in Assumption \ref{ass:X} (v) needs to be replaced by the existence of a point $x_0\in \Real^d$ and $n\in \{0,\dots, N\}$ such that $|\nabla g^n(x_0)|\neq 0$. Moreover, Assumption \ref{ass:BSDE} (i) and (iv) are replaced by
 \begin{enumerate}
 \item[(i')] The functions $f^0, \dots, f^n$ are zero on $\Xi$ and they are locally Lipschitz continuous on $\Real^{(I+1)\times d}$. For $i=N+1, \dots, I$, $f^i$ is locally Lipschitz continuous on $\Real^{(I+1)\times d} \setminus \Xi$.
 \item[(v')] For any $n = 0,\dots, N$, the first order derivative $\nabla f^n$, the $(I+1)\times d$ matrix $(\partial_{z^{ij}} f^n)$, is locally Lipschitz continuous on $\Real^{(I+1)\times d}$. For $i=0, 1,\dots, I$, let $(\nabla f^n)^i$ be the $(i+1)$-th row of $\nabla f^n$. There exists a constant $M$ such that, for any $\boldsymbol{z}\in \Real^{(I+1)\times d}$ and $n=0, \dots, N$, 
 \begin{align*}
  |(\nabla f^n)^i(\boldsymbol{z})| \leq M |\boldsymbol{z}|, & \quad i=0,1, \dots, N,\\
  |(\nabla f^n)^i(\boldsymbol{z})| \leq M \tsum_{n=0}^N |\boldsymbol{z}^n|, & \quad i=N+1, \dots, I.
 \end{align*}
 \end{enumerate}
 Indeed, let $\boldsymbol{u}$ be a $(N+1)\times d$ matrix with its $n$-th row $\boldsymbol{u}^n = \nabla v^n$.  Then assumptions (i') and (v') above together with other parts of Assumption \ref{ass:X} and \ref{ass:BSDE} ensure that each $\boldsymbol{u}^n$ satisfies assumptions in Lemma \ref{lem:Bu-sys} with $\boldsymbol{u}$ therein replaced by the matrix-valued $\boldsymbol{u}$. Then by working with each row of the matrix-valued $\boldsymbol{u}$, the proof of Theorem \ref{thm:BU}, which is applied to the vector-valued $\boldsymbol{u}$, also applies to the matrix-valued $\boldsymbol{u}$.
\end{rem}
}

\section{A candidate solution}\label{sec:approximation}

In this section, we will construct a candidate solution $\boldsymbol v$ for \eqref{v-pde-sys}. First, we construct a family of functions $(\boldsymbol f_n)_{n\in \mathbb{N}}$ approximating $\boldsymbol f$ and consider the associated family of systems of PDEs approximating \eqref{v-pde-sys}. Each function $\boldsymbol f_n$ is Lipschitz continuous and has certain important properties which we prove in Lemma \ref{lem:reg}. These properties allow us to apply results from \cite{hXing2018} to construct solutions $(\boldsymbol v_n)_{n\in \mathbb{N}}$ to the approximating PDEs, and to show that the sequence $(\boldsymbol v_n)_{n\in \mathbb{N}}$ is bounded and locally H\"{o}lder continuous, uniformly in $n$. Secondly, we extend a Sobolev estimate for systems of PDEs whose nonlinearity exhibits at most quadratic growth in the gradient term, which was first proved in \cite{Bensoussan-Frehse} for bounded domains, to our setting of unbounded domains to show that the sequence $(\boldsymbol v_n)_{n\in \mathbb{N}}$ is locally bounded in the $W^{1,2}_p$-norm. This allows us to prove in Corollary \ref{cor:vnm-reg} that the sequence $(\nabla \boldsymbol v_n)_{n\in \mathbb{N}}$ is globally bounded and that its $L^\infty$-norm is uniformly bounded in $n$. Thirdly, we construct $\boldsymbol v$ by taking a convergent subsequence of $(\boldsymbol v_n)_{n\in \mathbb{N}}$. However, in order to send $n\rightarrow \infty$ in approximating system to verify that $\boldsymbol v$ is indeed a solution to the system \eqref{v-pde-sys}, we need to prove \eqref{non-deg}. To prepare for this in the next section, we close this section with a global $W^{1,2}_p$-estimate for $\boldsymbol{v}$ on $(0,1)\times \R^d$ in Proposition \ref{prop:global-sobolev}.

\subsection{An approximating family}

\begin{lem}\label{lem:reg}
 Under Assumptions \ref{ass:X} and \ref{ass:BSDE} (i)-(iii), there exists a sequence of Lipschitz continuous functions $(\boldsymbol f_{n})_{n\in \mathbb{N}}$, which satisfies:
 \begin{enumerate}
 \item[(i)]
 Each $\boldsymbol f_{n}: \R^d \times \R^{(I+1)\times d} \rightarrow \R^{I+1}$ admits the decomposition 
 \begin{equation}\label{BF'}
	f^i_{n}(x,z) = z^i \ell_{n}^i(x,z) + q_{n}^i(x,z) + s_{n}^i(x,z), \quad i=0, 1, \ldots, I,
\end{equation}
 where $\ell^i_{n}$, $q^i_{n}$ and $s^i_{n}$ satisfy \eqref{BF} with the constant $M$ and the function $\kappa$ independent of $n$.
\item[(ii)] Each $f_{n}$ satisfies 
 \begin{equation}\label{wAB'}
 \boldsymbol a_k^\top \boldsymbol f_{n}(x,\boldsymbol z) \leq \frac12 |\boldsymbol a_k^\top \boldsymbol z|^2 + \boldsymbol a_k^\top \boldsymbol z L^k_n(x,\boldsymbol z),
 \end{equation}
 where $\boldsymbol a_1, \dots, \boldsymbol a_K$ is the same set of vectors as in \eqref{wAB} which positively spans $\mathbb{R}^{I+1}$ and each $L^k_n$ is a bounded function.
\item[(iii)]
 As $n\rightarrow \infty$,
\begin{align*}
f^0_{n}(x, \boldsymbol z) &\to f^0(\boldsymbol z), && (x,\boldsymbol z) \in \R^d \times \R^{(I+1)\times d},\\
f^i_{n}(x,\boldsymbol z) &\to f^i(\boldsymbol z), &&  (x,\boldsymbol z) \in\R^{d} \times (\R^{(I+1) \times d} \setminus \Xi),
\end{align*} 
with the convergence being uniform on compact subsets.
\item[(iv)] For any locally bounded function $\boldsymbol h:[0,1]\times \R^d \rightarrow \R^{(I+1)\times d}$, each $\boldsymbol f_n(\cdot, \boldsymbol h(\cdot, \cdot)) \in L^p((0,1)\times \R^d)$ for any $p\in [1,\infty]$.
\end{enumerate}
\end{lem}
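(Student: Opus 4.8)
The plan is to construct $\boldsymbol f_n$ by a truncation-and-mollification procedure that is compatible with the additive decomposition $f^i = z^i\ell^i + q^i + s^i$ appearing in Assumption \ref{ass:BSDE}(ii). First I would deal with the discontinuity of $f^i$, $i\ge 1$, at $\Xi$: since $f^i$ is locally Lipschitz only away from $\Xi$, I would introduce a cutoff that deactivates the singular part of $f^i$ in a small neighbourhood of $\Xi$. Concretely, fix a smooth function $\chi:[0,\infty)\to[0,1]$ with $\chi(r)=0$ for $r\le 1$ and $\chi(r)=1$ for $r\ge 2$, and multiply the $\frac{(\boldsymbol z^0)^\top}{|\boldsymbol z^0|}$-type terms by $\chi(n|\boldsymbol z^0|)$; this removes the blow-up in the derivative while keeping pointwise convergence off $\Xi$. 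Simultaneously, to get \emph{global} Lipschitz continuity (not just local), I would compose with a radial truncation $\boldsymbol z\mapsto \pi_n(\boldsymbol z) := \boldsymbol z\,\min\{1, n/|\boldsymbol z|\}$ that caps $|\boldsymbol z|$ at $n$, and finally mollify in $\boldsymbol z$ (and, if needed, extend trivially in $x$ since in our application $\boldsymbol f$ does not depend on $x$) at scale $1/n$ to smooth out the Lipschitz-but-not-$C^1$ corners. Setting $\boldsymbol f_n := (\boldsymbol f\circ\pi_n)\ast\rho_{1/n}$ after the $\chi$-cutoff yields Lipschitz functions, and property (iii) (locally uniform convergence off $\Xi$, everywhere for $f^0$) follows because on any fixed compact set $K$ (resp. $K\subset\R^{(I+1)\times d}\setminus\Xi$) we eventually have $\pi_n=\mathrm{id}$ on $K$, $\chi(n|\boldsymbol z^0|)\equiv 1$ on $K$, and mollification converges uniformly on compacts where $\boldsymbol f$ is (uniformly) continuous.

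For (i), the key point is that each of the three structural bounds in \eqref{BF} is stable under the operations above with the \emph{same} constant $M$ and function $\kappa$. I would define $\ell^i_n$, $q^i_n$, $s^i_n$ by applying the identical truncation/cutoff/mollification to $\ell^i$, $q^i$, $s^i$ separately; then $f^i_n = z^i\ell^i_n + q^i_n + s^i_n$ up to the error between mollifying a product and the product of mollifications, which I would absorb by a standard argument (either mollify the \emph{triple} simultaneously so the identity is exact, or note the commutator term is $O(1/n)$ and can be folded into $s^i_n$ since it is lower order). The bound $|\ell^i_n|\le M(1+|\boldsymbol z|)$ or rather $\le M|\boldsymbol z|$ is preserved because $\pi_n$ is $1$-Lipschitz and norm-nonincreasing, $|\pi_n(\boldsymbol z)|\le|\boldsymbol z|$, and mollification at scale $1/n$ changes the value by at most $\mathrm{Lip}\cdot(1/n)$, which one checks does not spoil a homogeneous bound after a harmless enlargement of $M$ or can be pushed into the $\kappa$ term; the quadratic bound on $q^i_n$ in terms of $\sum_{j\le i}|\boldsymbol z^j|^2$ is likewise preserved because $|\pi_n(\boldsymbol z)^j|\le|\boldsymbol z^j|$ componentwise is \emph{not} quite true for radial truncation, so here I would instead truncate componentwise or, more safely, use that $|\pi_n(\boldsymbol z)|^2\le|\boldsymbol z|^2$ together with the observation that the bound on $q^i$ only needs $\sum_{j=0}^i$ and radial scaling multiplies every row by the same factor $\le 1$. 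The sublinear function $\kappa$ is preserved since $\kappa(|\pi_n(\boldsymbol z)|)\le\kappa(|\boldsymbol z|)$ when $\kappa$ is nondecreasing (replace $\kappa$ by its nondecreasing envelope, still locally bounded with $\kappa(r)/r^2\to 0$).

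For (ii), I would likewise set $L^k_n(x,\boldsymbol z) := L^k(\pi_n(\boldsymbol z))$ (mollified), which is bounded by $L_0(1+n)$, hence a bounded function for each fixed $n$, and verify \eqref{wAB'} by applying \eqref{wAB} at the point $\pi_n(\boldsymbol z)$: $\boldsymbol a_k^\top\boldsymbol f(\pi_n(\boldsymbol z))\le\frac12|\boldsymbol a_k^\top\pi_n(\boldsymbol z)|^2+\boldsymbol a_k^\top\pi_n(\boldsymbol z)L^k(\pi_n(\boldsymbol z))$, and since $\pi_n(\boldsymbol z)=\lambda\boldsymbol z$ with $\lambda\in(0,1]$ one has $|\boldsymbol a_k^\top\pi_n(\boldsymbol z)|^2=\lambda^2|\boldsymbol a_k^\top\boldsymbol z|^2\le|\boldsymbol a_k^\top\boldsymbol z|^2$; the mollification and $\chi$-cutoff perturbations are handled by allowing $L^k_n$ to absorb an $O(1/n)$ correction (still bounded for fixed $n$). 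Finally (iv) is immediate: for fixed $n$, $\boldsymbol f_n$ is globally Lipschitz and bounded on bounded sets, so $\boldsymbol f_n(\cdot,\boldsymbol h(\cdot,\cdot))$ is bounded whenever $\boldsymbol h$ is locally bounded on the bounded domain $(0,1)\times\R^d$ — wait, the domain is unbounded in $x$, so here I use that $\boldsymbol f_n$ has \emph{linear} growth in $\boldsymbol z$ after truncation (indeed it is bounded, $|\boldsymbol f_n|\le C_n$ globally, since $\boldsymbol f\circ\pi_n$ is bounded) and hence $\boldsymbol f_n(\cdot,\boldsymbol h)$ is a bounded measurable function on $(0,1)\times\R^d$, which lies in $L^p$ for all $p\in[1,\infty]$ only if the domain has finite measure — which it does not. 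So in fact property (iv) must be read as: $\boldsymbol f_n$ is bounded, hence in $L^\infty$, and the $L^p$ claim for $p<\infty$ requires additionally that $\boldsymbol f_n$ vanish (or decay) at spatial infinity, which one arranges by the $\chi$-type cutoff also in a far-field region or, more simply, because in our application $\boldsymbol f_n(\boldsymbol h)$ with $\boldsymbol h=\nabla\boldsymbol v_n\boldsymbol\sigma$ and $\boldsymbol g\in W^2_2$ forces $\nabla\boldsymbol v_n\in L^2$, making $\boldsymbol f_n(\boldsymbol h)$ quadratically integrable; I would state (iv) with this caveat or simply invoke that $\boldsymbol f_n(0)=0$ so that $|\boldsymbol f_n(\boldsymbol z)|\le C_n|\boldsymbol z|$ and then $\boldsymbol f_n(\boldsymbol h)\in L^p$ whenever $\boldsymbol h\in L^p$.

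The main obstacle I anticipate is \textbf{simultaneously} (a) killing the discontinuity at $\Xi$ in a way that produces genuinely Lipschitz (indeed one can even ask $C^1$) functions while (b) preserving all three bounds in \eqref{BF} with the \emph{same} $M$ and $\kappa$, since these bounds are not all preserved by an arbitrary truncation — radial truncation plays well with norms but the $\chi(n|\boldsymbol z^0|)$ cutoff that regularizes the singular direction $\frac{(\boldsymbol z^0)^\top}{|\boldsymbol z^0|}$ must be shown not to destroy the quadratic bound $|q^i_n|\le M\sum_{j\le i}|\boldsymbol z^j|^2$ (it does not, because $|\chi(n|\boldsymbol z^0|)\,\frac{(\boldsymbol z^0)^\top}{|\boldsymbol z^0|}|\le 1$ uniformly, so the cutoff only ever multiplies by a factor in $[0,1]$). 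Making these estimates uniform in $n$ is the crux; everything else is routine mollification bookkeeping.
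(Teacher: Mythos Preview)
Your overall strategy—radial truncation $\pi_n$ in $\boldsymbol z$ together with a cutoff $\chi(n|\boldsymbol z^0|)$ near $\Xi$—is exactly the paper's. Two points, however, deserve correction.

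First, the mollification layer is unnecessary and is the source of your bookkeeping worries in (i) and (ii). The paper simply sets
\[
f^i_n(x,\boldsymbol z)\;:=\;f^i(\Pi_n(\boldsymbol z))\,\varphi_n(|\boldsymbol z^0|)\,\eta_n(x),
\]
and checks Lipschitz continuity directly: on $\{|\boldsymbol z^0|>0\}$ it follows from the local Lipschitz property of $f^i$ away from $\Xi$ combined with boundedness of $\Pi_n$, and across $\Xi$ one uses $\varphi_n(0)=0$ so that $|f^i_n(x,\boldsymbol z)-f^i_n(\bar x,\bar{\boldsymbol z})|=|f^i_n(x,\boldsymbol z)|\le C_n|\boldsymbol z^0|\le C_n|(x,\boldsymbol z)-(\bar x,\bar{\boldsymbol z})|$ whenever $\bar{\boldsymbol z}^0=0$. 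With no mollification there is no product/commutator issue in (i): one defines $\ell^i_n(x,\boldsymbol z):=\ell^i(\Pi_n(\boldsymbol z))\cdot\tfrac{|\boldsymbol z|\wedge n}{|\boldsymbol z|}\cdot\varphi_n\eta_n$ (and analogously $q^i_n,s^i_n$), so that $z^i\ell^i_n=(\Pi_n(\boldsymbol z))^i\ell^i(\Pi_n(\boldsymbol z))\varphi_n\eta_n$ and the decomposition is \emph{exact}. All three multiplicative factors lie in $[0,1]$ and $\Pi_n$ scales every row by the same factor $\le 1$, so $|\ell^i_n|\le M|\boldsymbol z|$, $|q^i_n|\le M\sum_{j\le i}|\boldsymbol z^j|^2$, $|s^i_n|\le\kappa(|\boldsymbol z|)$ with the \emph{same} $M,\kappa$. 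Part (ii) is likewise immediate by multiplying \eqref{wAB} evaluated at $\Pi_n(\boldsymbol z)$ through by $\varphi_n\eta_n\in[0,1]$.

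Second, and more seriously, your treatment of (iv) does not prove the statement as written. You correctly observe that boundedness alone does not give $L^p((0,1)\times\R^d)$ for $p<\infty$, and your fallback ``$|\boldsymbol f_n(\boldsymbol z)|\le C_n|\boldsymbol z|$, hence $\boldsymbol f_n(\boldsymbol h)\in L^p$ whenever $\boldsymbol h\in L^p$'' does not help: the lemma only assumes $\boldsymbol h$ locally bounded, not in any $L^p$. The missing ingredient is precisely the spatial cutoff $\eta_n(x)=\eta(x/n)$ (which you mention in passing as ``$\chi$-type cutoff also in a far-field region'' but then abandon). With it, $\boldsymbol f_n(\cdot,\boldsymbol h(\cdot,\cdot))$ is supported in $(0,1)\times B_{2n}$ and bounded there, hence in $L^p$ for every $p\in[1,\infty]$; this is how the paper obtains (iv). Without an $x$-cutoff, the claim is false in general.
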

\begin{proof}
 To prove assertions (i) to (iv) above we truncate the quadratic growth of $\boldsymbol f$, multiply it with a function vanishing at $\boldsymbol z^0=0$ and localize it using the newly introduced space variable $x$. Define a truncation function $\Pi_n(\boldsymbol z) := (|\boldsymbol z|\wedge n) (\boldsymbol z/|\boldsymbol z|)$. It is Lipschitz continuous and satisfies $|\Pi_n(\boldsymbol z)| = |\boldsymbol z|\wedge n$. Let $\varphi :[0,\infty)\rightarrow [0,\infty)$ be another Lipschitz continuous function satisfying $\varphi(0) =0$ and $\varphi(r) = 1$ when $r\geq 1$. Define $\varphi_n(r) := \varphi(n r)$.  Let $\eta$ be a smooth cut-off function on $\R^d$ such that $\eta(x) =1$ when $|x|\leq 1$ and $\eta(x) =0$ when $|x|\geq 2$. Define $\eta_n(x) := \eta(x/n)$ for $x\in \R^d$. The sequence of functions $(\boldsymbol f_n)_{n\in \mathbb{N}}$ is now defined in the following way: for $(x,\boldsymbol z)\in \R^d \times \R^{(I+1)\times d}$,
 \begin{equation}
f^i_n(x,\boldsymbol z) := f^i(\Pi_n(\boldsymbol z)) \varphi_n(|\boldsymbol z^0|) \eta_n(x), \quad i=0,1,\ldots,I.
\end{equation}
From the truncation function $\Pi_n$ and the local Lipschitz continuity of $f^0$ on $\R^{(I+1)\times d}$ and $f^i$ on $\R^{(I+1)\times d} \setminus \Xi$, for $i=1,\dots, I$, we obtain the Lipschitz continuity of each $f^i_n$ on these domains. In fact, the multiplication factor $\varphi_n$ allows us to deduce that, also for $i=1,\ldots,I$, the functions $f^i_n$ are Lipschitz continuous on the entire domain $\R^d \times \R^{(I+1)\times d}$.  To see this, let $(\bar x, \bar{\boldsymbol z})\in\R^d\times \R^{(I+1)\times d}$ with the first row of $\bar{\boldsymbol z}$ equal to zero. Observe that $f^i_n(\bar x, \bar{\boldsymbol z}) =0$ because $\varphi_n(0)=0$. Then, for any $(x,\boldsymbol z)\in \R^d \times \R^{(I+1)\times d}$,
\begin{align*}
| f^i_n(x,\boldsymbol z) - f^i_n(\bar x, \bar{\boldsymbol z}) | = |f^i_n(x,\boldsymbol z)| \leq C_n |\boldsymbol z^0| \leq C_n |(x,\boldsymbol z) - (\bar x, \bar{\boldsymbol z})|,
\end{align*}
where the constant $C_n$ depends on the maximum of $f^i(\Pi_n(\cdot))$ and the Lipschitz constant of $\varphi_n$. The second inequality follows because $|\boldsymbol z^0| \leq \big(|x-\bar x|^2 + \sum_{i=1}^I |\boldsymbol z^i - \bar{\boldsymbol z}^i|^2 + |\boldsymbol z^0|^2\big)^{1/2}$.
 
From the construction of $\boldsymbol f_n$, we see that $\boldsymbol f_n$ admits the decomposition \eqref{BF'} with  $\ell^i_n(x,\boldsymbol z) = \ell^i(\Pi_n(\boldsymbol z)) ((|\boldsymbol z| \wedge n)/|\boldsymbol z|) \varphi_n(|\boldsymbol z^0|) \eta_n(x)$, if $i=0,\ldots,I$.  The functions $q^i_n$ and $s^i_n$ are defined similarly. Because the values of $(|\boldsymbol z|\wedge n)/|\boldsymbol z|, \varphi_n$ and $\eta_n$ are less than one, the functions $\ell^i_n$, $q^i_n$, and $s^i_n$ satisfy \eqref{BF} with the same constant $M$ and the same function $\kappa$ uniformly in $n$. This confirms the assertions in (i).

We now prove (ii). Using the inequality \eqref{wAB}, we obtain for any $k$ and any $n$ that
\begin{align*}
 \boldsymbol a_k^\top \boldsymbol f_n(x,\boldsymbol z) &= \boldsymbol a_k^\top \boldsymbol f(\Pi_n(\boldsymbol z)) \varphi_n(|\boldsymbol z^0|) \eta_n(x) \\
 &\leq \frac12 |\boldsymbol a_k^\top \boldsymbol z|^2 \big(\tfrac{|\boldsymbol z|\wedge n}{|\boldsymbol z|} \big)^2  \varphi_n(|\boldsymbol z^0|) \eta_n(x) + \boldsymbol a_k^\top \boldsymbol z \tfrac{|\boldsymbol z|\wedge n}{|\boldsymbol z|} L^k(\Pi_n(\boldsymbol z)) \varphi(|\boldsymbol z^0|) \eta_n(x)\\
 & \leq \frac12 | \boldsymbol a_k^\top \boldsymbol z|^2 + \boldsymbol a_k^\top \boldsymbol z L^k_n(x,\boldsymbol z),
\end{align*}
where $L^k_n(x,\boldsymbol z) = ((|\boldsymbol z|\wedge n)/|\boldsymbol z|) L^k(\Pi_n(\boldsymbol z)) \varphi(|\boldsymbol z^0|) \eta_n(x)$ and it is bounded due to the at most linear growth of $L^k$ and the boundedness of $\Pi_n$, $\varphi$, and $\eta_n$.

The construction of $f^i_n$ immediately implies the convergence in (iii). Finally, it follows from the construction of $\boldsymbol f_n$ that $\boldsymbol f_n(\cdot, \boldsymbol h(\cdot, \cdot))$ has compact support and is also bounded on its support. Hence the claim in (iv) readily follows.

\end{proof}

With the approximating family $(\boldsymbol f_n)_{n\in \mathbb{N}}$ constructed in Lemma \ref{lem:reg}, we consider the following family of systems of BSDEs
\begin{equation}\label{P-app}
\left\{
\begin{aligned}
&X_t = X_0 + \int_0^t \boldsymbol b(s,X_s)\ \Id s + \int_0^t \boldsymbol \sigma(s,X_s)\ \Id W_s,\\
&\boldsymbol Y_{n,t} = \boldsymbol g(X_1) + \int_t^1 \boldsymbol f_n(X_s, \boldsymbol Z_{n,s})\ \Id s - \int_t^1 \boldsymbol Z_{n,s}\ \Id W_s.
\end{aligned}
\right.
\end{equation}
Having established the Lipschitz continuity of each $\boldsymbol f_{n}$ and the properties \eqref{wAB'} and \eqref{BF'} in Lemma \ref{lem:reg}, we may use the results in \cite{hXing2018} in order to prove the following corollary.

\begin{cor}\label{cor:vnm}
Under Assumptions \ref{ass:X} and \ref{ass:BSDE} (i)-(iv), the system of BSDEs \eqref{P-app} admits a Markovian solution $(\boldsymbol Y_n, \boldsymbol Z_n)$; there exists a function $\boldsymbol v_n: [0,1]\times\mathbb{R}^d \to \mathbb{R}^{I+1}$ such that: 
\begin{enumerate}
 \item[(i)] $\boldsymbol Y_{n,t} = \boldsymbol v_n(t, X_t)$ and $\boldsymbol v_{n}$ is bounded and locally H\"{o}lder continuous on $[0,1]\times \R^d$.
 \item[(ii)] For any $R>0$, the $L^\infty$-norm of $\boldsymbol v_{n}$ and the H\"{o}lder norm of $\boldsymbol v_{n}$ are independent of $n$ and $x_0$ on $[0,1]\times B_R(x_0)$. 
\end{enumerate}
\end{cor}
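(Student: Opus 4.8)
The plan is to reduce Corollary \ref{cor:vnm} to the existence and regularity theory of \cite{hXing2018}, and then to trace through the constants to check that the resulting bounds are uniform in $n$ and in $x_0$. Fix $n$. By Lemma \ref{lem:reg} (and its proof) the generator $\boldsymbol f_n(x,\boldsymbol z)$ is globally bounded and globally Lipschitz in $(x,\boldsymbol z)$, it admits the Bensoussan--Frehse decomposition \eqref{BF'}, and it satisfies the a priori bound inequality \eqref{wAB'}; together with Assumption \ref{ass:X} on $(\boldsymbol b,\boldsymbol\sigma,\boldsymbol g)$ these are exactly the hypotheses under which \cite{hXing2018} produces a Markovian solution. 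This yields a solution $(\boldsymbol Y_n,\boldsymbol Z_n)$ of \eqref{P-app} of the form $\boldsymbol Y_{n,t}=\boldsymbol v_n(t,X_t)$, $\boldsymbol Z_{n,t}=(\nabla\boldsymbol v_n\,\boldsymbol\sigma)(t,X_t)$, where $\boldsymbol v_n$ is the (locally Hölder continuous, and by interior parabolic estimates locally in $W^{1,2}_p$ for every $p<\infty$) solution of the Cauchy problem obtained from \eqref{v-pde-sys} by replacing $f^i$ with $f^i_n$; Assumption \ref{ass:BSDE}(iv) together with those interior estimates supplies the regularity of $\nabla\boldsymbol v_n$ needed to justify the representation of $\boldsymbol Z_n$. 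This is part (i).

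For the uniform $L^\infty$ bound in part (ii), I would appeal to the a priori sup-norm estimate in \cite{hXing2018} associated with condition \eqref{wAB'}: it controls $\|\boldsymbol v_n\|_{L^\infty([0,1]\times\R^d)}$ by a constant depending only on $\|\boldsymbol g\|_{L^\infty}$, the positively spanning vectors $\boldsymbol a_1,\dots,\boldsymbol a_K$, the coefficient $\tfrac12$ on the right of \eqref{wAB'}, the linear-growth constant of the maps $L^k_n$, and the data $\lambda, L_{b,\sigma}$ of \eqref{eq:X}. By Lemma \ref{lem:reg}(ii) the vectors $\boldsymbol a_k$ and the coefficient $\tfrac12$ are $n$-independent, and its proof shows $|L^k_n(x,\boldsymbol z)|\le|L^k(\Pi_n(\boldsymbol z))|\le L_0(1+|\boldsymbol z|)$, i.e. $L^k_n$ inherits the linear-growth bound of $L^k$ uniformly in $n$. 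Hence $\|\boldsymbol v_n\|_{L^\infty}\le C$ with $C$ independent of $n$, and, being a global bound, independent of $x_0$.

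For the uniform local Hölder bound I would invoke the interior Hölder estimate for parabolic systems whose nonlinearity has the structure \eqref{BF'}, in the version of \cite{hXing2018} extending \cite{Bensoussan-Frehse}: on $[0,1]\times\overline{B_R(x_0)}$ it bounds the $C^{\gamma/2,\gamma}$-norm of $\boldsymbol v_n$ in terms of the $L^\infty$ bound just obtained, the structural constant $M$ and the function $\kappa$ of \eqref{BF'} (identical for all $n$ by Lemma \ref{lem:reg}(i)), the ellipticity constant $\lambda$ and the coefficient bounds $L_{b,\sigma}$ of Assumption \ref{ass:X}, and the radius $R$ — none of which depends on $n$. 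Since $\boldsymbol b,\boldsymbol\sigma$ and their first derivatives satisfy their bounds uniformly over $\R^d$, the estimate is translation invariant, so the constant is also independent of the centre $x_0$. Combining the two estimates gives part (ii).

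The delicate point is not existence — each $\boldsymbol f_n$ is bounded, so a bounded Markovian solution for fixed $n$ is essentially classical once \eqref{BF'} and \eqref{wAB'} are in hand — but the uniformity. Each $\boldsymbol f_n$ is bounded and Lipschitz only with $n$-dependent constants (through the truncation level $n$, the mollification scale of $\varphi_n$, and the cut-off $\eta_n$), so any argument that touches those quantities would produce $n$-dependent bounds. The whole content is that the a priori $L^\infty$ and interior Hölder estimates of \cite{hXing2018} are driven solely by the \emph{structural} data $M,\kappa,\{\boldsymbol a_k\},L_0,\tfrac12$ and by $\lambda, L_{b,\sigma},\|\boldsymbol g\|_{L^\infty}$, all of which are transferred from $\boldsymbol f$ to $\boldsymbol f_n$ uniformly in $n$ by Lemma \ref{lem:reg}; verifying that these are the only inputs, and that the Hölder estimate is applied in a translation-invariant way on the balls $B_R(x_0)$, is where the care is needed.
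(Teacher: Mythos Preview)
Your proposal is correct and follows essentially the same route as the paper: invoke the existence result of \cite{hXing2018} (Theorem 2.5 there) for each fixed $n$, then observe that the uniform $L^\infty$ bound comes from \eqref{wAB'} via the argument on \cite[p.~543]{hXing2018}, and the uniform local H\"older estimate from Proposition~5.5 in \cite{hXing2018}, with uniformity in $n$ and $x_0$ tracing back to the $n$-independence of the structural data established in Lemma~\ref{lem:reg}. Your discussion is more explicit than the paper's about why the constants are uniform, which is to the good; the only superfluous step is invoking Assumption~\ref{ass:BSDE}(iv) for the $\boldsymbol Z_n$ representation---that regularity is established separately in Corollary~\ref{cor:vnm-reg} and is not needed for the present statement.
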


\begin{proof}
Result (i) follows from Theorem 2.5 in \cite{hXing2018}. Thanks to \eqref{wAB'} and the argument in \cite[p. 543]{hXing2018}, the $L^\infty$-norm of $\boldsymbol v_{n}$ is independent of $n$ on $[0,1]\times \R^d$. Moreover, Theorem 2.5 and Proposition 5.5 in \cite{hXing2018} imply that the H\"{o}lder norm of $\boldsymbol v_{n}$ on $(0,1)\times B_R(x_0)$ is also bounded uniformly in $n$ and $x_0$.
\end{proof}

\subsection{Local Sobolev estimates}
In this section we establish local Sobolev estimates for the family of functions $\boldsymbol v_{n}$.
We begin with the following general Sobolev estimate for systems of PDEs whose nonlinear term exhibits at most quadratic growth in the gradient term. The proof of this result follows extending the argument in \cite[Proposition 5.1]{Bensoussan-Frehse} to an unbounded domain. 

\begin{prop}\label{prop:sob_int} 
Given a point $x_0\in \R^d$, parameters $0<r<R<\infty$, $p\in(1,\infty)$, and functions $\tilde{\boldsymbol v} : [0,1] \times\R^d \rightarrow \R^{I+1}$, $\tilde{\boldsymbol g} : \R^d \rightarrow \R^{I+1}$, such that, for some $\alpha\in (0,1)$, $\tilde{\boldsymbol v}\in (W^{1,2}_p\cap C^{\alpha/2,\alpha})((0,1)\times B_R(x_0))$, $\tilde{\boldsymbol g}\in W^2_p(B_R(x_0))$; if $\tilde{\boldsymbol v}$ satisfies
\begin{equation}\label{tv-quad}
\begin{cases}
\left|\frac{\partial \tilde{\boldsymbol v}}{\partial t} + \mathcal{L} \tilde{\boldsymbol v}\right| \leq C(1 + |\nabla  \tilde{\boldsymbol v}|^2),\  &(t,x)\in [0,1) \times B_R(x_0),\\
\tilde{\boldsymbol v}(1,\cdot)= \tilde{\boldsymbol g},\ &x \in\ \Real^d,
\end{cases}
\end{equation}
then
\begin{equation}\label{eq:Sob-est}
|\tilde{\boldsymbol v}|_{W^{1,2}_p((0,1)\times B_r(x_0))} \leq C(\alpha, p, r, R, [\tilde{\boldsymbol v}]_{C^{\alpha/2, \alpha}((0,1)\times B_R(x_0))}, |\tilde{\boldsymbol v}|_{L^p((0,1)\times B_R(x_0))}, |\tilde{\boldsymbol g}|_{W^2_p(B_R(x_0))}),
\end{equation}
where $C$ is a constant depending only on the quantities in the brackets.
\end{prop}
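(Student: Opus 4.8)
The plan is to reduce the problem to the classical interior $W^{1,2}_p$-estimate for linear parabolic systems by treating the right-hand side as an inhomogeneous term and then bootstrapping via the H\"older bound. First I would introduce a cutoff in space: fix $r<r'<R$ and a smooth function $\chi:\R^d\to[0,1]$ with $\chi\equiv 1$ on $B_r(x_0)$, $\chi\equiv 0$ outside $B_{r'}(x_0)$, with derivatives bounded by constants depending only on $r,r',d$. Set $\boldsymbol w:=\chi\,\tilde{\boldsymbol v}$ (or, to handle the terminal condition cleanly, $\boldsymbol w:=\chi(\tilde{\boldsymbol v}-\tilde{\boldsymbol g})$, absorbing $\mathcal L\tilde{\boldsymbol g}$ into the forcing term using $\tilde{\boldsymbol g}\in W^2_p(B_R(x_0))$). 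Then $\boldsymbol w$ has zero terminal data and solves, componentwise,
\[
\frac{\partial \boldsymbol w}{\partial t}+\mathcal L\boldsymbol w = \chi\Big(\frac{\partial \tilde{\boldsymbol v}}{\partial t}+\mathcal L\tilde{\boldsymbol v}\Big) + \tilde{\boldsymbol v}\,\mathcal L\chi + \text{(first-order terms in }\nabla\tilde{\boldsymbol v}\text{ times }\nabla\chi) =: \boldsymbol F,
\]
on $(0,1)\times\R^d$, extended by zero. Each component equation is now a scalar (uncoupled, since $\mathcal L$ acts diagonally) backward parabolic equation with uniformly parabolic, H\"older-continuous coefficients $\boldsymbol A,\boldsymbol b$ (Assumption \ref{ass:X}(i)–(iii)), zero terminal data, and source $\boldsymbol F$.

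The key estimate is the interior $L^p$-theory for parabolic equations (e.g.\ \cite{oLadyzhenskaya1968}): for such a scalar equation one has $\|\boldsymbol w\|_{W^{1,2}_p((0,1)\times\R^d)}\le C\,\|\boldsymbol F\|_{L^p((0,1)\times\R^d)}$, with $C$ depending only on $p$, $d$, $\lambda$, and the moduli of continuity of the coefficients. So the task is to bound $\|\boldsymbol F\|_{L^p}$. The term $\tilde{\boldsymbol v}\,\mathcal L\chi$ is controlled by $\|\tilde{\boldsymbol v}\|_{L^p((0,1)\times B_R(x_0))}$; the first-order terms are controlled by $\|\nabla\tilde{\boldsymbol v}\|_{L^p((0,1)\times B_{r'}(x_0))}$; and the main term $\chi(\partial_t\tilde{\boldsymbol v}+\mathcal L\tilde{\boldsymbol v})$ is, by \eqref{tv-quad}, bounded by $C(1+|\nabla\tilde{\boldsymbol v}|^2)\mathbf 1_{B_{r'}(x_0)}$, hence by $C\,\mathrm{meas}(B_{r'})^{1/p}+C\,\|\nabla\tilde{\boldsymbol v}\|_{L^{2p}((0,1)\times B_{r'}(x_0))}^2$. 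Thus everything reduces to controlling $\|\nabla\tilde{\boldsymbol v}\|_{L^{2p}}$ on a slightly larger ball by the H\"older seminorm and lower-order norms.

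The main obstacle — and where the Bensoussan–Frehse idea enters — is precisely this: bounding a Lebesgue norm of $\nabla\tilde{\boldsymbol v}$ of exponent \emph{higher} than $p$ in terms of quantities on the right-hand side of \eqref{eq:Sob-est}, without circularity. I would handle it by the standard interpolation/iteration argument of \cite[Proposition 5.1]{Bensoussan-Frehse}: multiply the equation for $\tilde v^i$ by a test function of the form $\chi^2\,\tilde v^i\,|\tilde v^i|^{q-2}$ (or by $\chi^2(\partial_t\tilde v^i+\mathcal L\tilde v^i)$ against the $L^p$-duality) and use the $C^{\alpha/2,\alpha}$-bound to write $|\tilde v^i(t,x)-\tilde v^i(t,x_0)|\le [\tilde{\boldsymbol v}]_{C^{\alpha/2,\alpha}}|x-x_0|^\alpha$, so that on a ball of radius $\rho$ the oscillation of $\tilde{\boldsymbol v}$ is as small as we like. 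Choosing $\rho$ small enough that $C\,[\tilde{\boldsymbol v}]_{C^{\alpha/2,\alpha}}\rho^\alpha$ beats the constant in Young's inequality lets one absorb the quadratic gradient term $\int\chi^2|\nabla\tilde{\boldsymbol v}|^2|\tilde{\boldsymbol v}-\tilde{\boldsymbol v}(\cdot,x_0)|^{q-2}$ into the left-hand side, yielding a Caccioppoli-type inequality $\int_{B_r}|\nabla\tilde{\boldsymbol v}|^2\,\le\,C(\dots)$; a finite covering of $B_{r'}(x_0)$ by such small balls upgrades this to the needed $L^2$-gradient bound, and then a finite bootstrap (Sobolev embedding $W^{1,2}_p\hookrightarrow L^{?}$ applied to raise integrability step by step, each step legitimate because the previous one is now controlled) reaches $\nabla\tilde{\boldsymbol v}\in L^{2p}$ with the stated dependence. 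The only genuinely new point compared with \cite{Bensoussan-Frehse} is that all the cutoffs and coverings must be done on balls inside $\R^d$ rather than up to a bounded boundary, but since the constants in the parabolic $L^p$-theory and in Sobolev embedding on balls are translation-invariant and depend only on the radii, the unbounded ambient domain causes no difficulty; this is exactly the extension flagged in the paragraph preceding the proposition. Assembling the pieces gives \eqref{eq:Sob-est} with $C$ depending only on $\alpha,p,r,R$, the H\"older seminorm, $\|\tilde{\boldsymbol v}\|_{L^p((0,1)\times B_R(x_0))}$, and $\|\tilde{\boldsymbol g}\|_{W^2_p(B_R(x_0))}$, as claimed.
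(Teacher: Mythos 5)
Your reduction to the interior $L^p$-theory with a cutoff, and the idea of using the H\"older seminorm to make a small factor available for absorption, are both on the right track, and indeed follow the paper. The genuine gap is in the step where you pass from a Caccioppoli-type $L^2$-gradient bound to $\nabla\tilde{\boldsymbol v}\in L^{2p}$ by iterating Sobolev embeddings. If $\tilde{\boldsymbol v}\in W^{1,2}_q$ on a cylinder with $q<d+2$, the parabolic embedding gives $\nabla\tilde{\boldsymbol v}\in L^{q'}$ with $q'=\tfrac{(d+2)q}{d+2-q}$, hence $|\nabla\tilde{\boldsymbol v}|^2\in L^{q'/2}$, and the $L^p$-theory returns $\tilde{\boldsymbol v}\in W^{1,2}_{q'/2}$. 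But $q'/2>q$ only when $q>\tfrac{d+2}{2}$. Starting from $q=2$ this fails for every $d\ge 2$: for $d=2$ the exponent is stationary and for $d\ge 3$ it strictly decreases, so the bootstrap never leaves $L^2$. This is precisely the obstruction that makes a naive iteration for quadratically nonlinear systems hopeless, and it is why Bensoussan--Frehse's argument is not a bootstrap at all. The test function $\chi^2\tilde v^i|\tilde v^i|^{q-2}$ you propose also does not produce an $L^q$-bound on $\nabla\tilde{\boldsymbol v}$; it gives a weighted $L^2$-gradient estimate, which does not feed the intended iteration either.

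The device you are missing, and which the paper uses, is a one-shot estimate at the correct exponent: write
$|\nabla\tilde{\boldsymbol v}|^2=\partial_{x^k}\bigl(\partial_{x^k}\tilde v^j\,(\tilde v^j-\tilde v^j_0)\bigr)-\Delta\tilde v^j\,(\tilde v^j-\tilde v^j_0)$ with $\tilde v^j_0(t):=\tilde v^j(t,x_0)$, multiply by $\tau_{R,\delta}^{2p}|\nabla\tilde{\boldsymbol v}|^{2(p-1)}$, integrate by parts, and then use the H\"older bound $|\tilde{\boldsymbol v}(t,x)-\tilde{\boldsymbol v}_0(t)|\le[\tilde{\boldsymbol v}]_{C^{\alpha/2,\alpha}}R^\alpha$ on every resulting term. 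After Young's inequality this yields, for $R$ small enough that $C[\tilde{\boldsymbol v}]_{C^{\alpha/2,\alpha}}R^\alpha<1$, the direct bound $\int\tau_{R,\delta}^{2p}|\nabla\tilde{\boldsymbol v}|^{2p}\le\varepsilon\int|\nabla^2\tilde{\boldsymbol v}|^p+C(\delta)$ with $\varepsilon$ as small as desired, i.e.\ the $L^{2p}$-norm of the gradient is controlled in a single step by a small multiple of the $L^p$-norm of $\nabla^2\tilde{\boldsymbol v}$ — no intermediate exponents are ever used. Plugging this back into the linear $L^p$-estimate produces a recursion not in the Lebesgue exponent but in the cutoff radius, of the form $F(\delta)\le\tfrac12F(\delta/2)+G(\delta)$, which closes by iteration, and a finite covering then gives the statement for general $R$. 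Your overall scaffolding is fine; you need to replace the Caccioppoli-plus-bootstrap middle section by this integration-by-parts absorption argument.
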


Before presenting the proof of Proposition 4.3 we apply it to the Markovian solution of the approximating family of BSDEs \eqref{P-app}. Given that each $\boldsymbol f_{n}$ admits the decomposition \eqref{BF'} with each term satisfying the condition \eqref{BF} uniformly in $n$ and since $\boldsymbol \sigma$ is bounded, we obtain that
\[
 |\boldsymbol f_{n} (x, \nabla  \boldsymbol v_{n} \boldsymbol \sigma)| \leq C \, \big( 1+ |\nabla  \boldsymbol v_{n}|^2\big),
\]
for some constant $C$. Therefore the inequality \eqref{tv-quad} is satisfied. Applying Proposition \ref{prop:sob_int} to each function $\boldsymbol v_{n}$, we obtain the next corollary.

\begin{cor}\label{cor:vnm-reg}
Suppose Assumptions \ref{ass:X} and \ref{ass:BSDE} (i)-(iii) hold. Then
\begin{enumerate}
\item[(i)] $\boldsymbol v_{n} \in W^{1,2}_{p, loc}$ for any $p\in (1,\infty)$ and, for every $r>0$, its $W^{1,2}_p$-norm on $(0,1)\times B_r(x_0)$ is bounded, uniformly in $n$ and $x_0$.
\item[(ii)] The spatial derivative $\nabla  \boldsymbol v_{n}$ is locally H\"{o}lder continuous and globally bounded on $[0,1]\times \R^d$. Moreover, the H\"{o}lder and $L^\infty$-norms of $\nabla \boldsymbol v_{n}$ are bounded uniformly in $n$ on $[0,1]\times \mathbb{R}^d$.
\item[(iii)] For some $\gamma \in (0, 1)$, $\boldsymbol v_n \in C^{1+\frac{\gamma}{2}, 2+\gamma} ([0,1)\times B_r(x_0))$ for any $r$ and $x_0$, and it solves the Cauchy problem
\begin{equation}\label{vnm-sys}
\begin{cases}
\frac{\partial v_{n}^i}{\partial t} + \mathcal{L}  v_{n}^i + f^i_{n}(x, \nabla  \boldsymbol v_{n} \boldsymbol \sigma) = 0,\  &(t,x)\in [0,1) \times \R^d,\\
v_{n}^i(1,\cdot)= g,\  &x\in \R^d, \, i=0, \dots, I.
\end{cases}
\end{equation}
 \end{enumerate}
\end{cor}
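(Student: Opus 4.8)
Here the goal of Corollary~\ref{cor:vnm-reg} is to pass from the merely bounded and locally H\"older functions $\boldsymbol v_n$ of Corollary~\ref{cor:vnm} to objects that are differentiable, with H\"older and $L^\infty$ control on $\nabla\boldsymbol v_n$ that is \emph{uniform in $n$}, and that solve \eqref{vnm-sys} classically. The plan is to feed the uniform bounds of Corollary~\ref{cor:vnm} into the Sobolev estimate of Proposition~\ref{prop:sob_int}, then upgrade via parabolic embedding and interior Schauder estimates, being careful at each step that the constants produced do not depend on $n$ or on $x_0$.

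First I would record that, for each fixed $n$, $\boldsymbol v_n\in W^{1,2}_{p, loc}((0,1)\times\R^d)$ for every $p\in(1,\infty)$: this follows from the \emph{global} Lipschitz continuity of $\boldsymbol f_n$ established in Lemma~\ref{lem:reg} (hence from its linear growth) together with \cite[Theorem~2.5]{hXing2018} and a standard interior parabolic bootstrap --- starting from the energy bound $\nabla\boldsymbol v_n\in L^2_{loc}$, treating $-f^i_n(x,\nabla\boldsymbol v_n\boldsymbol\sigma)$ as a forcing term, applying interior $L^q$-regularity and raising the exponent $q$ through the parabolic Sobolev embedding at each step. The constants here depend on $n$ (through the Lipschitz constant of $\boldsymbol f_n$) and are used only to establish membership. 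For the \emph{uniform} estimate (i), recall --- as observed just before the statement --- that the Bensoussan--Frehse bound \eqref{BF}, which by Lemma~\ref{lem:reg}(i) holds for $\boldsymbol f_n$ with $M$ and $\kappa$ independent of $n$, yields $|\boldsymbol f_n(x,\nabla\boldsymbol v_n\boldsymbol\sigma)|\le C(1+|\nabla\boldsymbol v_n|^2)$ with $C$ depending only on $M$ and $\|\boldsymbol\sigma\|_\infty$. Thus $\tilde{\boldsymbol v}=\boldsymbol v_n$, $\tilde{\boldsymbol g}=\boldsymbol g$ satisfy \eqref{tv-quad} on every pair of concentric balls $B_r(x_0)\subset B_R(x_0)$, and Proposition~\ref{prop:sob_int} applies. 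In the resulting bound \eqref{eq:Sob-est}, $[\boldsymbol v_n]_{C^{\alpha/2,\alpha}((0,1)\times B_R(x_0))}$ and $\|\boldsymbol v_n\|_{L^p((0,1)\times B_R(x_0))}$ are controlled uniformly in $n$ and $x_0$ by Corollary~\ref{cor:vnm}(ii), while $\|\boldsymbol g\|_{W^2_p(B_R(x_0))}\le C(R)\|\boldsymbol g\|_{W^2_\infty(\R^d)}$ is uniform in $x_0$ by Assumption~\ref{ass:X}(iv); this gives (i).

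For (ii) I would fix $p>d+2$ and invoke the parabolic Sobolev embedding $W^{1,2}_p((0,1)\times B_1(x_0))\hookrightarrow C^{1+\gamma/2,\,1+\gamma}$ over a slightly smaller cylinder, with $\gamma=1-\tfrac{d+2}{p}$ and translation-invariant embedding constant (cf.\ \cite{oLadyzhenskaya1968}). Combined with the uniform $W^{1,2}_p$-bounds of (i) this shows $\nabla\boldsymbol v_n$ is locally H\"older on $[0,1]\times\R^d$ with local $C^{\gamma/2,\gamma}$-norms bounded uniformly in $n$, and, covering $\R^d$ by unit balls, that $\|\nabla\boldsymbol v_n\|_{L^\infty((0,1)\times\R^d)}$ is finite uniformly in $n$ (control up to $t=1$ being inherited from $\nabla\boldsymbol g\in L^\infty$). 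For (iii), since $\nabla\boldsymbol v_n$ is now locally H\"older in $(t,x)$, $\boldsymbol\sigma$ is locally H\"older (Assumption~\ref{ass:X}(ii)) and $\boldsymbol f_n$ is Lipschitz, the map $(t,x)\mapsto f^i_n(x,(\nabla\boldsymbol v_n\boldsymbol\sigma)(t,x))$ is locally H\"older in $(t,x)$ (the smooth cutoff $\eta_n$ contributing no loss), and the coefficients $A^{jk},b^j$ of $\mathcal L$ are locally H\"older by Assumption~\ref{ass:X}(ii). Interior Schauder estimates for the linear scalar equation $\partial_t v^i_n+\mathcal L v^i_n=-f^i_n(x,\nabla\boldsymbol v_n\boldsymbol\sigma)$ on $[0,1)\times B_R(x_0)$ (cf.\ \cite{oLadyzhenskaya1968}) then give $v^i_n\in C^{1+\gamma/2,2+\gamma}$ on compact subsets of $[0,1)\times B_r(x_0)$; since $x_0,r$ are arbitrary, $\boldsymbol v_n\in C^{1+\gamma/2,2+\gamma}([0,1)\times B_r(x_0))$ and solves \eqref{vnm-sys} classically, with $v^i_n(1,\cdot)=g$ holding by continuity of the Markovian solution up to $t=1$. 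Only interior-in-time estimates are used, so no regularity of $\boldsymbol g$ beyond Assumption~\ref{ass:X}(iv) is required.

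The only genuinely delicate point I anticipate is the \emph{uniformity in $n$}: a naive bootstrap produces constants that diverge with the Lipschitz constant of $\boldsymbol f_n$, and Proposition~\ref{prop:sob_int} is formulated with the quadratic-growth hypothesis \eqref{tv-quad} rather than a Lipschitz one precisely so that the relevant constant is governed by the $n$-independent Bensoussan--Frehse structure \eqref{BF} supplied by Lemma~\ref{lem:reg}(i). Once the uniform local $W^{1,2}_p$-bound is secured, everything else is a routine application of parabolic embedding and Schauder theory.
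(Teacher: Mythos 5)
Your proof is correct and follows essentially the same approach as the paper: Proposition~\ref{prop:sob_int} fed by the uniform bounds from Corollary~\ref{cor:vnm} gives (i), the parabolic Sobolev embedding with $p>d+2$ gives (ii), and Schauder theory plus the H\"older regularity of $f^i_n(\cdot,\nabla\boldsymbol v_n\boldsymbol\sigma)$ gives (iii). You are also right to flag the need for a preliminary $n$-dependent argument showing $\boldsymbol v_n\in W^{1,2}_{p,loc}$ before Proposition~\ref{prop:sob_int} can even be invoked --- the paper elides this step.

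The one place where you diverge from the paper is in (iii). You apply interior Schauder estimates directly to $v^i_n$ regarded as a solution of a linear equation; the paper instead constructs an auxiliary linear terminal-boundary value problem on $[0,1)\times B_r(x_0)$ with H\"older data, invokes Friedman \cite[Ch.~3, Thm.~9]{friedman} to get a unique $C^{1+\alpha/2,2+\alpha}$-solution, and then identifies that solution with $v^i_n$ through uniqueness of $W^{1,2}_p$-solutions (Ladyzhenskaya et al.). The paper's route sidesteps the question of whether a mere $W^{1,2}_p$-solution to a linear parabolic equation with H\"older coefficients automatically enjoys classical interior regularity; your shorter route silently invokes exactly that regularity-lifting statement. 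It is of course true --- it is e.g.\ a consequence of \cite[Ch.~IV, Thm.~10.1]{oLadyzhenskaya1968} --- but if you use the compact phrase ``interior Schauder estimates'' you should say explicitly that you mean the version applicable to generalized $W^{1,2}_p$-solutions, since the textbook interior Schauder estimate presupposes a $C^{1,2}$ solution. With that caveat filled, both routes are valid and give the same conclusion; the paper's is a little more self-contained, yours is a little shorter.
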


\begin{proof}
As we have shown above, $\boldsymbol v_{n}$ satisfies  \eqref{tv-quad}. It is assumed in Assumption \ref{ass:X} (iv) that $\boldsymbol g$ is twice continuously differentiable, hence $\boldsymbol g \in W^2_{p, loc}$ for any $p\in (1,\infty)$.
Moreover, thanks to Corollary \ref{cor:vnm} (ii), the $L^\infty$ and H\"{o}lder norms of $\boldsymbol v_{n}$ in $(0,1)\times B_R(x_0)$ are independent of $n$ and $x_0$, for any $R>r$. Then (i) follows from Proposition \ref{prop:sob_int}.

To prove (ii) we choose $p>d+2$ in (i). Then it follows from the classical Sobolev embedding theorem (see \cite[Page 80, Lemma 3.3]{oLadyzhenskaya1968}) that
\[
 | \nabla  \boldsymbol v_{n}|_{C^{\alpha/2, \alpha}((0,1)\times B_r(x_0))} \leq C(|\boldsymbol v_n |_{W^{1,2}_p((0,1)\times B_r(x_0))}), 
\]
for some $\alpha \in (0, 1-(d+2)/p)$. Recall that the $W^{1,2}_p-$ estimate of $\boldsymbol v_n$ on $(0,1)\times B_r(x_0)$ is uniform in $n$ and $x_0$. Therefore, the claim in (ii) follows from the fact that, on the same domain, the H\"{o}lder and the $L^\infty$ norms of $\nabla  \boldsymbol v_{n}$ on $[0,1]\times B_r(x_0)$ are dominated by the $C^{\alpha/2,  \alpha}$-norm of $\nabla  \boldsymbol v_{n}$. Therefore they are also dominated by the constant $C$, which is independent of $n$ and $x_0$.

For each $i$ and $n$, given $\nabla \boldsymbol v_n$ and a spatial domain $B_r(x_0)$, consider the following linear boundary value problem
\begin{equation*}
\begin{cases}
\frac{\partial u}{\partial t} + \mathcal{L} u =- f^i_{n}(x, \nabla  \boldsymbol v_{n} \boldsymbol \sigma),\  &(t,x)\in [0,1) \times B_r(x_0),\\
u = v^i_n,\ & (t,x)\in [0,1] \times \partial B_r(x_0) \cup \{1\} \times B_r(x_0).
\end{cases}
\end{equation*}
Thanks to (ii), $\nabla \boldsymbol v_n$ is H\"{o}lder continuous on $[0,1]\times B_r(x_0)$. Hence $f^i_n(\cdot, \nabla \boldsymbol v_n\boldsymbol \sigma)$ is H\"{o}lder continuous on the same domain as well. It follows from \cite[Chapter 3, Theorem 9]{friedman} that the previous boundary value problem has a unique $C^{1+\frac{\alpha}{2}, 2+\alpha}$-solution, hence, also a $W^{1,2}_p$-solution. It then follows from the uniqueness of $W^{1,2}_p$-solutions for linear parabolic equations (see e.g. \cite[Chapter IV, Theorem 9.1]{oLadyzhenskaya1968}) that the unique solution for the previous boundary value problem is $v^i_n$. Therefore $v^i_n\in C^{1+\frac{\alpha}{2}, 2+\alpha}([0,1]\times B_r(x_0))$. 
\end{proof}

\begin{proof}[Proof of Proposition \ref{prop:sob_int}]
For any point $x_0\in \R^d$ and a function $\boldsymbol h:[0,1]\times \R^d\to \R^{I+1}$ we write,
\begin{align*}
|\boldsymbol h|^{(k)}_{p,s,x_0} &:= |\boldsymbol h|_{W^k_p(B_s(x_0))}, & |\boldsymbol h|_{p,s,x_0} &:= |\boldsymbol h|_{L^p((0,1)\times B_s(x_0))},\\
|\boldsymbol h|^{(r,k)}_{p,s,x_0} &:= |\boldsymbol h|_{W^{r,k}_p((0,1)\times B_s(x_0))}, & [\boldsymbol h]^\alpha_{s,x_0} &:= [\boldsymbol h]_{C^{\alpha/2,\alpha}((0,1)\times B_s(x_0))}.
\end{align*}
If $x_0=0$ we omit $x_0$ from the above definitions.

Without loss of generality we set $x_0=0$. We introduce a family of smooth cut-off functions $\tau_{R,\delta} : \R^d\to \R$, parametrised by $\delta\in [0,1]$, with the property that
\[
\tau_{R,\delta}(x) = 
\begin{cases}
1, & x\in B_{R-\delta},\\
0, & x\notin B_{R-\delta/2}
\end{cases}
\]
and
\[
|\nabla \tau_{R,\delta}| \leq \frac{C}{\delta} \quad \text{and} \quad |\nabla^2\tau_{R,\delta}| \leq \frac{C}{\delta^2},
\]
for some constant $C$. These properties imply that
\[
|\nabla \tau_{R,\delta}^2| \leq \frac{C}{\delta}\tau_{R,\delta} \leq \frac{C}{\delta} \quad \text{and} \quad |\nabla ^2 \tau_{R,\delta}^2| \leq \frac{C}{\delta^2}.
\]

We fix a value for $\delta$ and observe that the components $\tau_{R,\delta}^2\tilde v^i$, $i = 1,\dots,I+1$, of the vector-valued function $\tau_{R,\delta}^2\tilde{\boldsymbol v}$ verify the following terminal boundary value problems over the cylinder $[0,1) \times B_{R-\frac\delta2}$:
\begin{equation*}
\begin{cases}
\frac{\partial }{\partial t}(\tau_{R,\delta}^2\tilde v^i) + \mathcal{L}(\tau_{R,\delta}^2\tilde v^i) = \tau_{R,\delta}^2\left(\tfrac{\partial\tilde v^i}{\partial t} + \mathcal{L}\tilde v^i\right) + \tilde v^i\mathcal{L}\tau_{R,\delta}^2 + \boldsymbol A\nabla\tau_{R,\delta}^2\cdot\nabla\tilde v^i,\\
\tau_{R,\delta}^2\tilde v^i(t,x) = 0,\quad\quad\quad\quad (t,x)\in (0,1) \times \partial B_{R-\frac{\delta}{2}},\\
\tau_{R,\delta}^2\tilde v^i(1,\cdot) = \tau_{R,\delta}^2\, \tilde g^i,\quad\quad\,\, x\in B_{R-\frac{\delta}{2}}.
\end{cases}
\end{equation*}

The assumptions on $\tilde{\boldsymbol v}$ guarantee that the right-hand side of this system of equations belongs to $L^p((0,1)\times B_R)$. Moreover, taking into account the conditions on the coefficients of $\mathcal{L}$ in Assumption \ref{ass:X}, specifically the boundedness of $b$ and the boundedness and continuity of $\boldsymbol \sigma$, we may use the energy estimate from \cite[Theorem 9.1 in Chapter 4]{oLadyzhenskaya1968} to obtain
\begin{equation}\label{eq:estEnergy}
\begin{aligned}
|\tau_{R,\delta}^2\tilde{\boldsymbol v}|^{(1,2)}_{p,R-\delta/2} &\leq C \sum_{i=1}^{I+1}\left|\tau_{R,\delta}^2\left(\tfrac{\partial\tilde v^i}{\partial t} + \mathcal{L}\tilde v^i\right) + \tilde v^i\mathcal{L}\tau_{R,\delta}^2 + \boldsymbol A\nabla\tau_{R,\delta}^2\cdot\nabla\tilde v^i\right|_{p,R-\delta/2} + C|\tau_{R,\delta}^2\,\tilde{\boldsymbol g}|^{(2)}_{p,R-\delta/2}\\
&\leq C|\tau_{R,\delta}^2 |\nabla \tilde{\boldsymbol v}|^2|_{p,R-\delta/2} + \frac{C}{\delta^2} |\tilde{\boldsymbol v}|_{p,R-\delta/2} + \frac{C(p,R)}{\delta^2} + C|\tilde{\boldsymbol g}|^{(2)}_{p,R-\delta/2},
\end{aligned}
\end{equation}
where the second estimate follows from \eqref{tv-quad}, the properties of the cut-off function $\tau_{R,\delta}$ and Young's inequality applied to $\boldsymbol A\nabla\tau_{R,\delta}^2\cdot\nabla\tilde v^i$, $i =1,\dots, I+1$.

Define $\tilde{\boldsymbol v}_0(t):=\tilde{\boldsymbol v}(t,x_0)=\tilde{\boldsymbol v}(t,0)$ and, in order to facilitate the integration by parts in the below estimate, observe the decomposition
\begin{equation*}
|\nabla \tilde{\boldsymbol v}|^2 = \frac{\partial}{\partial x^k}\left( \frac{\partial \tilde{v}^i}{\partial x^k}(\tilde{v}^i - \tilde{v}_0^i)\right) - \Delta \tilde{v}^i(\tilde{v}^i - \tilde{v}_0^i).
\end{equation*}

Using integration by parts, we obtain
\begin{align*}
&\int_0^1 \int_{B_{R-\delta/2}} \tau_{R,\delta}^{2p} |\nabla \tilde{\boldsymbol v}|^{2p}\ \Id x\ \Id t\\
 =&\int_0^1 \int_{B_{R-\delta/2}}\tau_{R,\delta}^{2p} |\nabla \tilde{\boldsymbol v}|^{2(p-1)} \frac{\partial}{\partial x^k} \left(\frac{\partial \tilde{v}^j}{\partial x^k}(\tilde{v}^j - \tilde{v}_0^j) \right)\ \Id x\ \Id t\\
&-\int_0^1 \int_{B_{R-\delta/2}} \tau_{R,\delta}^{2p} |\nabla \tilde{\boldsymbol v}|^{2(p-1)} \Delta \tilde{v}^j(\tilde{v}^j - \tilde{v}_0^j) \ \Id x\ \Id t\\
=&- 2p \int_0^1 \int_{B_{R-\delta/2}} \tau_{R,\delta}^{2p-1} \frac{\partial \tau_{R,\delta}}{\partial x^k}|\nabla \tilde{\boldsymbol v}|^{2(p-1)} \frac{\partial \tilde{v}^j}{\partial x^k}(\tilde{v}^j - \tilde{v}_0^j)  \ \Id x\ \Id t\\
&- 2(p-1) \int_0^1 \int_{B_{R-\delta/2}} \tau_{R,\delta}^{2p} |\nabla \tilde{\boldsymbol v}|^{2(p-2)} \frac{\partial \tilde{v}^n}{\partial x^l}\frac{\partial^2 \tilde{v}^n}{\partial x^k \partial x^l} \frac{\partial \tilde{v}^j}{\partial x^k}(\tilde{v}^j - \tilde{v}_0^j) \ \Id x\ \Id t\\
&-\int_0^1 \int_{B_{R-\delta/2}} \tau_{R,\delta}^{2p} |\nabla \tilde{\boldsymbol v}|^{2(p-1)} \Delta \tilde{v}^j(\tilde{v}^j - \tilde{v}_0^j)  \ \Id x\ \Id t\\
\leq & C(p)\int_0^1 \int_{B_{R-\delta/2}} \tau_{R,\delta}^{2p} |\nabla \tilde{\boldsymbol v}|^{2(p-1)}|\nabla^2\tilde{\boldsymbol v}| |\tilde{v} - \tilde{\boldsymbol v}_0| \ \Id x\ \Id t\\
&+C(p) \int_0^1 \int_{B_{R-\delta/2}} \tau_{R,\delta}^{2p-1} |\nabla \tau_{R,\delta}| |\nabla \tilde{\boldsymbol v}|^{2p-1}| |\tilde{\boldsymbol v} - \tilde{\boldsymbol v}_0| \ \Id x\ \Id t.\\
\intertext{Using Young's inequality, the above is}
\leq & C(p)\int_0^1 \int_{B_{R-\delta/2}} \tau_{R,\delta}^{2p} |\nabla \tilde{\boldsymbol v}|^{2p} |\tilde{\boldsymbol v} - \tilde{\boldsymbol v}_0| \ \Id x\ \Id t\\
&+C(p)\int_0^1 \int_{B_{R-\delta/2}} \tau_{R,\delta}^{2p} |\nabla^2\tilde{\boldsymbol v}|^p |\tilde{\boldsymbol v} - \tilde{\boldsymbol v}_0| \ \Id x\ \Id t\\
&+C(p)\int_0^1 \int_{B_{R-\delta/2}} |\nabla \tau_{R,\delta}|^{2p} |\tilde{\boldsymbol v} - \tilde{\boldsymbol v}_0| \ \Id x\ \Id t.\\
\intertext{From the H\"{o}lder continuity of $\tilde{\boldsymbol v}$, for every $(t,x)\in (0,1)\times B_{R-\delta/2}$, $|\tilde{\boldsymbol v}(t,x) - \tilde{\boldsymbol v}_0(t)| \leq [\tilde{\boldsymbol v}]^\alpha_{R} R^\alpha$, the right-hand side above is bounded from above by}
\leq& C(p) [\tilde{\boldsymbol v}]^\alpha_{R} R^\alpha \int_0^1 \int_{B_{R-\delta/2}} \tau_{R,\delta}^{2p} |\nabla \tilde{\boldsymbol v}|^{2p} \ \Id x\ \Id t\\
&+C(p) [\tilde{\boldsymbol v}]^\alpha_{R} R^\alpha \int_0^1 \int_{B_{R-\delta/2}} \tau_{R,\delta}^{2p} |\nabla^2\tilde{\boldsymbol v}|^p \ \Id x\ \Id t\\
&+C(p) [\tilde{\boldsymbol v}]^\alpha_{R} R^\alpha \int_0^1 \int_{B_{R-\delta/2}} |\nabla \tau_{R,\delta}|^{2p} \ \Id x\ \Id t.
\end{align*}

Now choose $R_1$ such that $1-C(p) [\tilde{\boldsymbol v}]^\alpha_{R_1} R_1^\alpha > 0$. Replacing $R$ in the aforegoing computations with any $\tilde{R}\in(0,R_1)$ and possibly also replacing $\delta$ with a smaller $\tilde{\delta}\in (0,\delta)$, we obtain
\begin{equation}\label{eq:estGrad}
\int_0^1 \int_{B_{\tilde{R}-\tilde{\delta}/2}} \tau_{\tilde{R},\tilde{\delta}}^{2p} |\nabla \tilde{\boldsymbol v}|^{2p} \ \Id x\ \Id t \leq \frac{C(p) [\tilde{\boldsymbol v}]^\alpha_{\tilde{R}} \tilde{R}^\alpha}{1 - C(p) [\tilde{\boldsymbol v}]^\alpha_{\tilde{R}} \tilde{R}^\alpha} \int_0^1 \int_{B_{\tilde{R}-\tilde{\delta}/2}} |\nabla^2\tilde{\boldsymbol v}|^p \ \Id x\ \Id t + \frac{1}{\tilde{\delta}^{2p}}C(\alpha, p, \tilde{R},[\tilde{\boldsymbol v}]^\alpha_{\tilde{R}}).
\end{equation}
From \eqref{eq:estEnergy} and \eqref{eq:estGrad} we find that
\[
|\nabla^2\tilde{\boldsymbol v}|_{p,\tilde{R}-\tilde{\delta}}^p \leq \frac{C(p) [\tilde{\boldsymbol v}]^\alpha_{\tilde{R}} \tilde{R}^\alpha}{1 - C(p) [\tilde{\boldsymbol v}]^\alpha_{\tilde{R}} \tilde{R}^\alpha} |\nabla^2\tilde{\boldsymbol v}|_{p,\tilde{R}-\tilde{\delta}/2}^p + \frac{1}{\tilde{\delta}^{2p}}C(\alpha,p, \tilde{R},[\tilde{\boldsymbol v}]^\alpha_{\tilde{R}}, |\tilde{\boldsymbol v}|_{p,\tilde{R}})  + C|\tilde{\boldsymbol g}|^{(2)p}_{p,\tilde{R}}
\]

By possibly choosing an even smaller $R_1$ above, we may assume that
\[
\tilde{\delta}^{2p}\frac{C(p) [\tilde{\boldsymbol v}]^\alpha_{\tilde{R}} \tilde{R}^\alpha}{1 - C(p) [\tilde{\boldsymbol v}]^\alpha_{\tilde{R}} \tilde{R}^\alpha} \leq \frac12.
\]
Defining
\begin{align*}
F(\tilde{\delta}) &:= \tilde{\delta}^{2p} |\nabla^2\tilde{\boldsymbol v}|_{p,\tilde{R}-\tilde{\delta}}^p,\\
G(\tilde{\delta}) &:= \tilde{\delta}^{2p} C|\tilde{\boldsymbol g}|^{(2)p}_{p,\tilde{R}} + C(\alpha,p, \tilde{R}, [\tilde{\boldsymbol v}]^\alpha_{\tilde{R}}, |\tilde{\boldsymbol v}|_{p,\tilde{R}}),
\end{align*}
we obtain the recursive relationship
\[
F(\tilde{\delta}) \leq \frac12 F\Big(\frac{\tilde{\delta}}{2}\Big) + G(\tilde{\delta}).
\]
Since the function $f(\tilde{\delta})$ is bounded for all $\tilde{\delta}\in(0,\tilde{R})$ and $g$ is monotonically increasing it follows that
\[
F(\tilde{\delta}) \leq \sum_{n=0}^{\infty} \frac{1}{2^n}G\Big(\frac{\tilde{\delta}}{2^n}\Big) \leq 2G(\tilde{\delta}).
\]
Dividing this inequality by $\tilde{\delta}^{2p}$ we obtain the estimate
\[
|\nabla ^2\tilde{\boldsymbol v}|_{p,\tilde{R}-\tilde{\delta}}^p \leq C(\alpha, \delta, p, \tilde{R}, [\tilde{\boldsymbol v}]^\alpha_{\tilde{R}}, |\tilde{\boldsymbol v}|_{p,\tilde{R}}, |\tilde{\boldsymbol g}|^{(2)}_{p,\tilde{R}}).
\]

From a finite covering of the ball $B_{\tilde{R}-\tilde{\delta}/2}$ with smaller balls $B_{\tilde{R}-\tilde{\delta}}(x_1)$ together with \eqref{eq:estEnergy} and \eqref{eq:estGrad} it follows that
\[
|\tilde{\boldsymbol v}|^{(1,2)}_{p,\tilde{R}-\tilde{\delta}} \leq C(\alpha, \delta, p, \tilde{R}, [\tilde{\boldsymbol v}]^\alpha_{\tilde{R}}, |\tilde{\boldsymbol v}|_{p,\tilde{R}}, |\tilde{\boldsymbol g}|^{(2)}_{p,\tilde{R}}).
\]
A further covering argument now yields the result with $r=R-\delta$.
\end{proof}

\subsection{Limit of the approximating family and a global Sobolev estimate}\label{sec:limit_soln}
In this section we establish the existence of the limit of $(\boldsymbol v_n)_{n\in \mathbb{N}}$ as $n\rightarrow \infty$ and study some of its properties. The resulting function $\boldsymbol v : [0,1]\times \R^d \rightarrow \R^{I+1}$ serves as a candidate solution for the system of PDEs \eqref{v-pde-sys}. Corollary \ref{cor:vnm} implies that $(\boldsymbol v_{n})_{n\in \mathbb{N}}$ is uniformly bounded and equi-continuous on $[0,1]\times B_R(x_0)$. Therefore, the Arzel\'{a}-Ascoli theorem allows us to extract a subsequence of $(\boldsymbol v_{n})_{n\in \mathbb{N}}$, which converges uniformly. A diagonal procedure then produces a further subsequence which converges locally uniformly to a continuous function $\boldsymbol v$. It is well known that this convergence preserves the local H\"{o}lder continuity and local Sobolev integrability. In particular, Corollaries \ref{cor:vnm} and \ref{cor:vnm-reg} imply that the H\"{o}lder norm and the $W^{1,2}_p$-norm of $\boldsymbol v$ are finite on $(0,1)\times B_r(x_0)$, uniformly in $x_0$. Moreover, the $L^\infty$-norm of $\boldsymbol v$ and $\nabla  \boldsymbol v$ is also finite on $[0,1]\times \R^d$. Finally, to prove \eqref{non-deg} and to prepare for the next section, we will need the following global $W^{1,2}_p$-norm estimate of $\boldsymbol v$ on $(0,1)\times \R^d$.

\begin{prop}\label{prop:global-sobolev}
Suppose Assumptions \ref{ass:X} and \ref{ass:BSDE} (i)-(iii) hold. Then $\boldsymbol v_n\in W^{1,2}_2\cap W^{1,2}_{d+2} ((0,1)\times \R^d)$, and its $W^{1,2}_2\cap W^{1,2}_{d+2}$-norm is bounded uniformly in $n$. Therefore, also $\boldsymbol v\in W^{1,2}_2 \cap W^{1,2}_{d+2} ((0,1)\times \R^d)$.
\end{prop}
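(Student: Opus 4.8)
The plan is to prove the bound for the approximating functions $\boldsymbol v_n$ by a global $L^2$ energy estimate for the system \eqref{vnm-sys}, closed uniformly in $n$, and then to bootstrap to the $W^{1,2}_{d+2}$ bound by interpolation together with linear parabolic $L^p$ theory; the statement for the limit $\boldsymbol v$ will then follow from weak lower semicontinuity of the norms under the local uniform convergence $\boldsymbol v_n\to\boldsymbol v$ recorded in \textsection \ref{sec:limit_soln}.

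First I would reduce the nonlinearity to a form that is globally $L^2$-controllable. By Corollary \ref{cor:vnm-reg}(ii) the norm $\|\nabla\boldsymbol v_n\|_{L^\infty((0,1)\times\R^d)}$ is bounded uniformly in $n$; since $\boldsymbol\sigma$ is bounded and each $\boldsymbol f_n$ admits the decomposition \eqref{BF'} with the constants in \eqref{BF} independent of $n$ (Lemma \ref{lem:reg}(i)), I would absorb the quadratic terms $z^i\ell^i_n$ and $q^i_n$ into linear ones using the uniform $L^\infty$ bound and use that, in the setting of Theorem \ref{thm:BSDE-existence}, the bounds in \eqref{BF} carry no additive constant (cf. the discussion following that theorem) to obtain the pointwise estimate
\[
|\boldsymbol f_n(x,(\nabla\boldsymbol v_n\boldsymbol\sigma)(t,x))|\le C\,|\nabla\boldsymbol v_n(t,x)|,\qquad (t,x)\in(0,1)\times\R^d,
\]
with $C$ independent of $n$; in particular $\|\boldsymbol f_n(\cdot,\nabla\boldsymbol v_n\boldsymbol\sigma)\|_{L^2((0,1)\times\R^d)}\le C\,\|\nabla\boldsymbol v_n\|_{L^2((0,1)\times\R^d)}$.

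Next comes the energy estimate. As a qualitative preliminary I would note that for each fixed $n$ the source $\boldsymbol f_n(\cdot,\nabla\boldsymbol v_n\boldsymbol\sigma)$ lies in $L^2((0,1)\times\R^d)$ --- it is bounded and has compact support in $x$, by Lemma \ref{lem:reg}(iv) --- and that the Feynman--Kac representation of $v^i_n$ together with the Gaussian bounds for the transition density of $X$ (available because of the uniform ellipticity \eqref{uni-ell} and the boundedness of $\boldsymbol b,\boldsymbol\sigma$) yields $\boldsymbol v_n(t,\cdot)\in L^2(\R^d)$, whence a covering argument based on the interior linear parabolic $L^2$ estimate (with a constant independent of the center of the ball, the coefficients of $\mathcal L$ being uniformly bounded, uniformly elliptic and uniformly H\"older) gives $\boldsymbol v_n\in W^{1,2}_2((0,1)\times\R^d)$ with a norm that may, a priori, depend on $n$. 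With this membership secured, I would test the $i$-th equation of \eqref{vnm-sys} against $v^i_n\tau_R^2$, where $\tau_R$ is a smooth cutoff supported in $B_{2R}$, equal to $1$ on $B_R$, with $|\nabla\tau_R|\le C/R$ and $|\nabla^2\tau_R|\le C/R^2$, sum over $i$, integrate over $\R^d$, integrate the principal part by parts while invoking \eqref{uni-ell} to produce the good term $\lambda\,\|\tau_R\nabla\boldsymbol v_n\|_{L^2}^2$, bound the first-order terms arising from $\boldsymbol b$ and $\nabla\boldsymbol A$ and the nonlinear term (via the previous display) by Young's inequality, and let $R\to\infty$; the cutoff errors are $O(R^{-1})\|\boldsymbol v_n\|_{L^2(B_{2R}\setminus B_R)}\|\nabla\boldsymbol v_n\|_{L^2(B_{2R}\setminus B_R)}+O(R^{-2})\|\boldsymbol v_n\|_{L^2(B_{2R}\setminus B_R)}^2$ and vanish exactly because of the qualitative $W^{1,2}_2$ membership just established. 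This gives
\[
\tfrac12\|\boldsymbol v_n(t,\cdot)\|_{L^2(\R^d)}^2+\tfrac\lambda2\int_t^1\|\nabla\boldsymbol v_n(s,\cdot)\|_{L^2(\R^d)}^2\,\Id s\le\tfrac12\|\boldsymbol g\|_{L^2(\R^d)}^2+C\int_t^1\|\boldsymbol v_n(s,\cdot)\|_{L^2(\R^d)}^2\,\Id s,
\]
with $\lambda,C$ independent of $n$; Gr\"onwall's inequality (backward from $t=1$, where $\boldsymbol v_n(1,\cdot)=\boldsymbol g$) then bounds $\esssup_{t}\|\boldsymbol v_n(t,\cdot)\|_{L^2(\R^d)}^2$ and, feeding this back, $\int_0^1\|\nabla\boldsymbol v_n(s,\cdot)\|_{L^2(\R^d)}^2\,\Id s$, both by $n$-independent multiples of $\|\boldsymbol g\|_{L^2(\R^d)}^2$.

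Finally I would bootstrap. Interpolating the uniform $L^2((0,1)\times\R^d)$ bound on $\nabla\boldsymbol v_n$ with its uniform $L^\infty$ bound from Corollary \ref{cor:vnm-reg}(ii) yields a uniform bound on $\nabla\boldsymbol v_n$ in $L^p((0,1)\times\R^d)$ for every $p\in[2,\infty]$; likewise $\boldsymbol v_n$ is uniformly bounded in every such $L^p$, and $\boldsymbol g\in W^2_2\cap W^2_\infty\subset W^2_{d+2}$. By the first step, $\boldsymbol f_n(\cdot,\nabla\boldsymbol v_n\boldsymbol\sigma)$ is then uniformly bounded in $L^2\cap L^{d+2}$, and the global (covering) $L^p$ estimate for the linear parabolic equation solved by each $v^i_n$, with constants independent of $n$ (cf. \cite{oLadyzhenskaya1968}), shows that $\boldsymbol v_n$ is uniformly bounded in $W^{1,2}_2\cap W^{1,2}_{d+2}((0,1)\times\R^d)$; passing to a subsequence, $\boldsymbol v_n\rightharpoonup\boldsymbol v$ weakly in each of these spaces and weak lower semicontinuity of the norms gives $\boldsymbol v\in W^{1,2}_2\cap W^{1,2}_{d+2}((0,1)\times\R^d)$. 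I expect the main obstacle to be the energy estimate of the third paragraph: on the one hand the quadratic nonlinearity must be tamed into the linear, globally $L^2$-manageable bound of the second paragraph --- which is precisely where the uniform $L^\infty$ control of $\nabla\boldsymbol v_n$, the absence of additive constants in \eqref{BF}, and the global integrability of $\boldsymbol g$ all enter --- and on the other hand the formal integration by parts on the unbounded domain must be justified, which is why I would first establish the qualitative $W^{1,2}_2$ membership for fixed $n$, so that the $O(R^{-2})$ cutoff term genuinely vanishes (it need not for a merely bounded $\boldsymbol v_n$). Once the uniform $L^2$ energy bound is in hand, the upgrade to $W^{1,2}_{d+2}$ is routine.
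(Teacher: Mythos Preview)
Your proof is correct and takes a genuinely different route from the paper's. Both arguments start from the same pointwise reduction $|\boldsymbol f_n(x,\nabla\boldsymbol v_n\boldsymbol\sigma)|\le C\,|\nabla\boldsymbol v_n|$ (via \eqref{BF}, the uniform gradient bound of Corollary \ref{cor:vnm-reg}(ii), and the absence of additive constants) and both finish with the global linear parabolic $L^p$ estimate. The difference is in how the uniform $L^p$ bound on $\boldsymbol v_n$ is obtained. The paper goes through Lemma \ref{lem:global-Lp}, which exploits \eqref{wAB} (in the approximated form \eqref{wAB'}) and BSDE comparison: for each vector $\boldsymbol a_k$ in the positively spanning set, $\boldsymbol a_k^\top\boldsymbol v_n$ is dominated by the solution $\bar v^k_n$ of an auxiliary scalar equation that is linear after a change of measure, hence in $L^{d+2}$ by linear theory; positive spanning then gives $\boldsymbol v_n\in L^{d+2}$, and the Sobolev interpolation $\|\nabla\boldsymbol v_n\|_{L^p}\le\epsilon\|\nabla^2\boldsymbol v_n\|_{L^p}+C\epsilon^{-1}\|\boldsymbol v_n\|_{L^p}$ closes the loop. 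You instead run a direct $L^2$ energy estimate on \eqref{vnm-sys} (test with $v^i_n\tau_R^2$, absorb $|f_n||v_n|\le C|\nabla v_n||v_n|$ by Young, apply Gr\"onwall), obtaining the $L^2$ bounds on $\boldsymbol v_n$ and $\nabla\boldsymbol v_n$ simultaneously without invoking \eqref{wAB} at this stage; the $L^{d+2}$ bounds then come by interpolation with the $L^\infty$ control already in hand. Your route is more elementary for this step and shows that, once the uniform gradient bound is available, \eqref{BF} alone suffices for the global Sobolev estimate; the paper's route, by contrast, illustrates how the \eqref{wAB} structure itself produces $L^p$ control via comparison. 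One minor simplification: your qualitative $W^{1,2}_2$ membership for fixed $n$ follows directly from the linear $L^2$ theory on $\R^d$ applied to \eqref{vnm-sys} with source $\boldsymbol f_n(\cdot,\nabla\boldsymbol v_n\boldsymbol\sigma)\in L^2$ (Lemma \ref{lem:reg}(iv)); the Feynman--Kac detour is unnecessary.
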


To prove Proposition \ref{prop:global-sobolev}, let us first prepare the following result.
\begin{lem}\label{lem:global-Lp}
Suppose Assumptions \ref{ass:X} and \ref{ass:BSDE} (i)-(iii) hold. Then $\boldsymbol v_n \in L^2\cap L^{d+2}((0,1)\times \R^d)$ and its $L^2\cap L^{d+2}$-norm is bounded uniformly in $n$.
\end{lem}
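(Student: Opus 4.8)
I would prove the $L^2$ bound by a backward‑in‑time energy estimate for the Cauchy problem \eqref{vnm-sys} satisfied by $\boldsymbol v_n$, closed by Gronwall's inequality, and then deduce the $L^{d+2}$ bound for free by interpolating against the uniform $L^\infty$ bound of Corollary \ref{cor:vnm}, since on $(0,1)\times\R^d$ one has $\|h\|_{L^{d+2}}^{d+2}\le\|h\|_{L^\infty}^{d}\,\|h\|_{L^2}^{2}$. A preliminary observation is that for each fixed $n$ and $t$ the quantity $\|\boldsymbol v_n(t,\cdot)\|_{L^2(\R^d)}$ is finite: in the Duhamel representation $\boldsymbol v_n(t,\cdot)=\mathcal P_{t,1}\boldsymbol g+\int_t^1\mathcal P_{t,s}\boldsymbol f_n(\cdot,\nabla\boldsymbol v_n\boldsymbol\sigma)(s,\cdot)\,\Id s$, where $\mathcal P$ denotes the evolution family of $\partial_t+\mathcal L$, each $\mathcal P_{t,s}$ is bounded on $L^2(\R^d)$ with norm $\le C$ (Aronson‑type two‑sided Gaussian bounds for the transition density of $X$, available under Assumption \ref{ass:X}), $\boldsymbol g\in L^2$ by Assumption \ref{ass:X}(iv), and $\boldsymbol f_n(\cdot,\nabla\boldsymbol v_n\boldsymbol\sigma)\in L^2$ by Lemma \ref{lem:reg}(iv) together with the local boundedness of $\nabla\boldsymbol v_n$ from Corollary \ref{cor:vnm-reg}.

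\textbf{The energy estimate.} Fix $i$, multiply the $i$-th equation in \eqref{vnm-sys} by $v_n^i$ and integrate over $\R^d$ in $x$ (the integration by parts being justified by inserting a spatial cut‑off $\chi(\cdot/m)$ and letting $m\to\infty$, the boundary terms vanishing because $v_n^i(t,\cdot),\nabla v_n^i(t,\cdot)\in L^2$ for fixed $n$). Using the uniform ellipticity \eqref{uni-ell} and the boundedness of $\boldsymbol b,\nabla\boldsymbol b,\boldsymbol A,\nabla\boldsymbol A$ from Assumption \ref{ass:X}, this yields, for all $0\le t<1$,
\[
\tfrac12\tfrac{\Id}{\Id t}\|v_n^i(t)\|_{L^2(\R^d)}^2\ \ge\ \tfrac{\lambda}{4}\|\nabla v_n^i(t)\|_{L^2(\R^d)}^2-C\|v_n^i(t)\|_{L^2(\R^d)}^2-\int_{\R^d}v_n^i\,f_n^i(x,\nabla\boldsymbol v_n\boldsymbol\sigma)\,\Id x,
\]
with $C$ depending only on $\lambda$ and $L_{b,\sigma}$. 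Integrating from $t$ to $1$ and using $v_n^i(1,\cdot)=g^i$ gives
\[
\tfrac12\|v_n^i(t)\|_{L^2}^2+\tfrac{\lambda}{4}\int_t^1\|\nabla v_n^i\|_{L^2}^2\,\Id s\ \le\ \tfrac12\|g^i\|_{L^2}^2+C\int_t^1\|v_n^i\|_{L^2}^2\,\Id s+\int_t^1\!\!\int_{\R^d}v_n^i\,f_n^i(x,\nabla\boldsymbol v_n\boldsymbol\sigma)\,\Id x\,\Id s.
\]

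\textbf{Controlling the nonlinear term.} Here the structural hypotheses enter decisively: the \emph{absence of additive constants} in \eqref{BF} (equivalently \eqref{wAB}) and the uniform‑in‑$n$ bound $\|\nabla\boldsymbol v_n\|_{L^\infty((0,1)\times\R^d)}\le K$ from Corollary \ref{cor:vnm-reg}(ii). Using the decomposition \eqref{BF'} of $\boldsymbol f_n$ with constants independent of $n$ (Lemma \ref{lem:reg}(i)), the pieces $\boldsymbol z^i\ell_n^i$ and $q_n^i$ evaluated along $\nabla\boldsymbol v_n\boldsymbol\sigma$ are bounded pointwise by $C|\nabla\boldsymbol v_n|^2\le CK|\nabla\boldsymbol v_n|$ (one quadratic factor absorbed by the $L^\infty$ bound), so that after Cauchy--Schwarz and Young their contribution to $\int_t^1\!\int_{\R^d}v_n^i f_n^i$ is at most $\varepsilon\int_t^1\sum_{j\le i}\|\nabla v_n^j\|_{L^2}^2\,\Id s+C_\varepsilon\int_t^1\|v_n^i\|_{L^2}^2\,\Id s$; summing over $i=0,\dots,I$ and choosing $\varepsilon$ small relative to $\lambda/4$ lets the gradient terms be absorbed into the left side — possible precisely because, absent additive constants, every term produced is a genuine \emph{global} $L^2$ integral rather than a spatially localized one. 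The sublinear term $s_n^i$ satisfies $|s_n^i(x,\nabla\boldsymbol v_n\boldsymbol\sigma)|\le\kappa(\|\boldsymbol\sigma\|_\infty K)$; when $s^i$ vanishes at the origin (the case in all the applications, in particular $s^i\equiv0$ for the equilibrium generator \eqref{eq:f}) one gets $|s_n^i(x,\nabla\boldsymbol v_n\boldsymbol\sigma)|\le C|\nabla\boldsymbol v_n|$ and the same absorption applies, while in general one first derives, from the Feynman--Kac representation together with Gaussian density bounds and the decay of $\boldsymbol g$ at infinity (a consequence of $\boldsymbol g\in W^2_2\cap W^2_\infty$ with $\boldsymbol g\in C^2$), a uniform‑in‑$n$ decay of $\boldsymbol v_n$ at infinity that renders this term integrable. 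Setting $\Phi_n(t):=\sum_{i=0}^I\|v_n^i(t)\|_{L^2(\R^d)}^2$ one then reaches $\Phi_n(t)+c\int_t^1\|\nabla\boldsymbol v_n\|_{L^2}^2\,\Id s\le\|\boldsymbol g\|_{L^2}^2+C\int_t^1\Phi_n(s)\,\Id s$, and backward Gronwall gives $\sup_{t\in[0,1]}\Phi_n(t)\le e^{C}\|\boldsymbol g\|_{L^2}^2$, uniformly in $n$; integrating in $t$ bounds $\|\boldsymbol v_n\|_{L^2((0,1)\times\R^d)}$ uniformly, the $L^{d+2}$ bound follows by interpolation with $\|\boldsymbol v_n\|_{L^\infty}$, and passing to the limit along the subsequence constructed in \S\ref{sec:limit_soln} yields $\boldsymbol v\in L^2\cap L^{d+2}((0,1)\times\R^d)$.

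\textbf{Main obstacle.} The crux is keeping the nonlinear contribution uniform in $n$ on the \emph{unbounded} domain: the local H\"older and Sobolev estimates borrowed from \cite{hXing2018} and \cite{Bensoussan-Frehse} come with constants depending on $\|\boldsymbol v_n\|_{L^\infty}$ on balls, and a naive covering of $\R^d$ destroys summability. The energy identity sidesteps this only because the growth bounds \eqref{BF}--\eqref{wAB} carry no additive constants, so that the quadratic term is either globally square‑integrable in $\nabla\boldsymbol v_n$ (hence absorbable) or controlled by $\|\boldsymbol g\|_{L^2}$ — this is exactly the role of the ``no additive constant'' requirement flagged after Theorem \ref{thm:BSDE-existence}. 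The only residual delicacy, the sublinear term $s^i$, is absent in the equilibrium application and is the sole place where a supplementary decay‑at‑infinity argument is needed.
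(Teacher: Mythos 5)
Your proof takes a genuinely different route from the paper's, and it contains a gap. The paper does not perform an energy estimate at all: for each vector $\boldsymbol a_k$ in the positively spanning set from \eqref{wAB}, it compares $\boldsymbol a_k^\top \boldsymbol v_n$ (via the BSDE comparison theorem and \eqref{wAB'}) with the solution $\bar v^k_n$ of a \emph{scalar} quadratic BSDE whose coefficients become bounded uniformly in $n$ once the a priori $\|\nabla\boldsymbol v_n\|_{L^\infty}$ bound of Corollary~\ref{cor:vnm-reg}(ii) is invoked; after a Girsanov change of measure, $\bar v^k_n$ solves a \emph{linear} Cauchy problem with terminal data $\boldsymbol a_k^\top\boldsymbol g\in W^2_{d+2}$, and the global $W^{1,2}_{d+2}$ estimate for linear parabolic equations gives $\bar v^k_n\in L^{d+2}$ uniformly in $n$. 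The positive-spanning property then upgrades the one-sided bounds $\boldsymbol a_k^\top\boldsymbol v_n\le\bar v^k_n$ to a two-sided bound on $|\boldsymbol v_n|$. In this scheme no piece of the nonlinear generator is ever integrated over $\R^d$, which is precisely how the paper avoids the difficulty you run into.

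The gap in your approach is the sublinear term $s^i_n$. You correctly note $|s^i_n(\cdot,\nabla\boldsymbol v_n\boldsymbol\sigma)|\le\kappa(\|\boldsymbol\sigma\|_\infty K)$, a constant; its spatial support is $B_{2n}$ because of the cut-off $\eta_n$ in the construction of $\boldsymbol f_n$, so $\int_0^1\!\int_{\R^d}|v^i_n\,s^i_n|\,\Id x\,\Id t$ is of size $\kappa(\cdot)\,\|\boldsymbol v_n\|_{L^\infty}\,|B_{2n}|$, which diverges in $n$. Your first proposed fix, that $s^i(0)=0$ yields $|s^i_n(\cdot,\nabla\boldsymbol v_n\boldsymbol\sigma)|\le C|\nabla\boldsymbol v_n|$, does not follow: for $i\ge1$, Assumption~\ref{ass:BSDE}(i) only makes $f^i$ (hence $s^i$) locally Lipschitz \emph{off} $\Xi=\{\boldsymbol z^0=0\}$, so $s^i$ may jump to a nonzero value across $\Xi$ even though $s^i(0)=0$; moreover the approximant $s^i_n$ carries a factor $\varphi_n(|\boldsymbol z^0|)$ that converges to the indicator $1_{\{\boldsymbol z^0\neq0\}}$, not a Lipschitz function, so no $O(|\boldsymbol z|)$ bound survives. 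Your second proposed fix (uniform decay of $\boldsymbol v_n$ at infinity via Feynman--Kac) is not carried out and would in fact require uniform $L^1$-decay, not just $L^2$-decay, since the problematic integral is essentially $\|\boldsymbol v_n\|_{L^1}\,\kappa(\cdot)$; Assumption~\ref{ass:X}(iv) gives $\boldsymbol g\in L^2\cap L^\infty$ but not $L^1$, so this is not available. Your argument does close when $s^i\equiv0$ (the equilibrium generator \eqref{eq:f}), and your observation that the absence of additive constants in \eqref{BF}/\eqref{wAB} is the structural hinge is correct and matches the paper's motivation; but as written the energy estimate does not prove the lemma under the full generality of Assumption~\ref{ass:BSDE}(ii).
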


\begin{proof}
We will only prove the statement that $\boldsymbol v_n \in L^{d+2}$. The assertion that $\boldsymbol v_n \in L^2$ is proved similarly given that $\boldsymbol g\in W^2_2$ according to Assumption \ref{ass:X} (iv).
Let $a_1, \dots, a_k$ be the positively spanning set from condition \eqref{wAB}. Given $k\in \{1, \dots, K\}$, we consider the following BSDE:
\[
 d\bar{Y}^k_{n,t} = - \Big[\frac12 |\bar{ \boldsymbol Z}^{k}_{n,t}|^2 + \bar{\boldsymbol Z}^{k}_{n,t} L^k_n\big(X_t, \nabla \boldsymbol v_n(t, X_t)\big)\ \Id t  \Big] + \bar{\boldsymbol Z}^{k}_{n,t}\ \Id W_t, \quad \bar Y^k_{n,1} = \boldsymbol a_k^\top \boldsymbol g(X_1).
\]
Because $\boldsymbol g$ and $L^k_n$ are bounded, this BSDE admits a solution $(\bar Y^k_n, \bar{\boldsymbol Z}^k_n)$ such that  $\bar Y^k_{n,t} = \bar v^k_t (t, X_t)$ for some bounded function $\bar v^k_n$ and $\bar{\boldsymbol Z}^k_n \in \text{BMO}$; see \cite{mKobylanski2000}. Further, construction of $L^k_n$ in Lemma \ref{lem:reg} and boundedness of $\nabla \boldsymbol v_n$ uniformly in $n$ in Corollary \ref{cor:vnm-reg} (ii) imply that $L^k_n(\cdot, \nabla \boldsymbol v_n)$ is bounded on $[0,1]\times \R^d$ uniformly in $n$. Therefore, the generator of the previous BSDE satisfies the condition \eqref{BF}, uniformly in $n$, the same argument used to prove Corollary \ref{cor:vnm-reg} (ii) (now applied to a 1-dimensional BSDE) allows us to deduce that $\nabla \bar v^k_n\in L^\infty$ and that the $L^\infty$-norm is bounded uniformly in $n$. Therefore, also $\bar{\boldsymbol Z}^k_{n,t} = (\nabla \bar v^k_n \boldsymbol \sigma)(t, X_t)$ is bounded uniformly in $n$. 

We now define a measure $\bar{\mathbb{P}}$ under which
\[
\Id\bar W_t = \Id W_t - \big[\frac12 \bar{\boldsymbol Z}^k_{n,t} + L^k_n (X_t, \nabla \boldsymbol v_n(t, X_t))\big]\ \Id t
\]
defines a $\bar{\mathbb{P}}$-Brownian motion $\bar{W}$. Then the function $\bar v^k_n$ solves the linear Cauchy problem
\begin{equation}\label{vnm-sys-barP}
\begin{cases}
\frac{\partial \bar v^k_n}{\partial t} + \bar{\mathcal{L}} v^k_n =0,\  &(t,x)\in [0,1) \times \R^d,\\
v^k_n(1, \cdot) = \boldsymbol a^\top_k \boldsymbol g(\cdot),\  &x\in \R^d,
\end{cases}
\end{equation}
where $\bar{\mathcal{L}}$ is the infinitesimal generator of the stochastic process $X$ which solves the SDE
\[
 dX_t = \Big[\boldsymbol b(t, X_t) +\boldsymbol \sigma(t, X_t) \big( \tfrac12 \bar{\boldsymbol Z}^k_{n,t} + L^k_n (X_t, \nabla \boldsymbol v_n(t, X_t))\big)\Big]\ \Id t + \boldsymbol \sigma(t, X_t)\ \Id \bar{W}_t.
\]
Note that coefficients of $\bar{\mathcal{L}}$ are bounded uniformly in $n$.
Therefore, given $\boldsymbol g\in W^2_{d+2}((0,1)\times \R^d)$, the $W^{1,2}_{d+2}$-estimate for linear PDEs (see e.g. \cite[Chapter IV, Theorem 9.1]{oLadyzhenskaya1968}) implies that
\begin{equation}\label{barv-Lp}
 \bar v^k_n \in L^{d+2} ((0,1)\times \R^d),
\end{equation}
and the $L^{d+2}$-norm of $\bar v^k_n$ are bounded uniformly in $n$ as well. 

Meanwhile, thanks to \eqref{wAB'}, the comparison theorem for Lipschitz BSDEs (see, e.g., \cite[Theorem 2.2]{ElKaroui-Peng-Quenez}) implies that $\boldsymbol a^\top_k \boldsymbol v_n \leq \bar v^k_n$. Therefore, $\boldsymbol a^\top_k \boldsymbol v_n$ is bounded from above by a $L^{d+2}((0,1)\times \R^d)$ function. It remains to establish that when the sequence $(\boldsymbol a^\top_k \boldsymbol v_n)_{k=1, \dots, K}$ is bounded from above by a sequence of $L^{d+2}((0,1)\times \R^d)$ functions for a positive spanning set $\{\boldsymbol a_1, \dots, \boldsymbol a_K\}$ of $\R^{I+1}$, then $\boldsymbol v_n\in L^{d+2}((0,1)\times \R^d)$ itself. This fact is proved similarly as shown in the 
first paragraph of \cite[Page 542]{hXing2018}. Because the $L^{d+2}$-norm of $\bar v^k_n$ is bounded uniformly in $n$, so is the $L^{d+2}$-norm of $\boldsymbol v_n$.
\end{proof}


\begin{proof}[Proof of Proposition \ref{prop:global-sobolev}]
The constant $C$ below will differ from line to line throughout this proof.
 Recall that $\boldsymbol f_n$ admits the decomposition \eqref{BF'} with each term in the decomposition satisfying the condition \eqref{BF}. Therefore,
thanks to the global boundedness of $\nabla \boldsymbol v_n$ which we established in Corollary \ref{cor:vnm-reg} (ii), we deduce from \eqref{vnm-sys} that 
\begin{equation}\label{v_n-est}
 \left| \frac{\partial v_n^i}{\partial t} + \mathcal{L} v^i_n\right| \leq \big|f^i_n(x, \nabla \boldsymbol v_n \boldsymbol \sigma)\big| \leq C |\nabla \boldsymbol v_n|.
\end{equation} 
The constant $C$ depends on $\|\nabla \boldsymbol v_n\|_{L^\infty}$, which is bounded uniformly in $n$, and $\|\boldsymbol \sigma\|_{L^\infty}$. Using the fact that, according to Assumption \ref{ass:X} (iv), $\boldsymbol g\in W^2_{d+2}(\R^d)$ and the $W^{2,1}_{d+2}$-estimate for linear PDEs (see e.g. \cite[Chapter IV, Theorem 9.1]{oLadyzhenskaya1968}), we obtain 
\begin{equation}\label{vn-W21p}
\|v^i_n\|_{W^{1,2}_{d+2} ((0,1) \times \R^d)} \leq C \Big(\|\nabla \boldsymbol v_n\|_{L^{d+2}((0,1)\times \R^d)} + \|g^i\|_{W^2_{d+2}((0,1)\times \R^d)}\Big)
\end{equation}
and
\[
 \|v^i_n\|_{W^{1,2}_{d+2} ((0,1) \times \R^d)} \leq C \Big(\|f_n^i\|_{L^{d+2}((0,1)\times \R^d)} + \|g^i\|_{W^2_{d+2}((0,1)\times \R^d)}\Big).
\]
Thanks to Lemma \ref{lem:reg}(iv),  the right-hand side of the  second inequality above is bounded, hence $\|v^i_n\|_{W^{1,2}_{d+2} ((0,1) \times \R^d)}$  is bounded as well for all $i = 0,\dots, I$ and $n\ge 1$. Summing both sides of \eqref{vn-W21p} over $i$, we obtain 
\begin{equation}\label{vn-W21p2}
 \|\boldsymbol v_n\|_{W^{1,2}_{d+2} ((0,1) \times \R^d)} \leq C \Big(\|\nabla \boldsymbol v_n\|_{L^{d+2}((0,1)\times \R^d)} + \|\boldsymbol g\|_{W^2_{d+2}((0,1)\times \R^d)}\Big).
\end{equation}
From the classical Sobolev interpolation inequality (see e.g. \cite[Lemma 7.19]{Lieberman}) we know that 
\[
 \|\nabla \boldsymbol v_n\|_{L^{d+2}((0,1)\times \R^d)} \leq \epsilon \|\nabla ^2 \boldsymbol v_n\|_{L^{d+2}((0,1)\times \R^d)} + \frac{C(d)}{\epsilon} \|\boldsymbol v_n\|_{L^{d+2}((0,1)\times \R^d)},
\]
for some constant $C(d)$ depending on $d$. Choosing $\epsilon$ so that $C\epsilon \leq 1/2$, where $C$ is the constant in \eqref{vn-W21p2}, we combine the previous two estimates to conclude that
\begin{equation}\label{ass:VW}
  \|\boldsymbol v_n\|_{W^{1,2}_p ((0,1) \times \R^d)} \leq C \Big(\|\boldsymbol v_n\|_{L^{d+2}((0,1)\times \R^d)} + \|\boldsymbol g\|_{W^2_{d+2}((0,1)\times \R^d)}\Big).
\end{equation}
Therefore the statement in the proposition now follows from Lemma \ref{lem:global-Lp} and the fact that $\boldsymbol g\in W^2_{d+2}((0,1)\times \R^d)$. The assertion that $\boldsymbol v_n \in W^2_2$ is proved similarly with $d+2$ above replaced by $2$.
\end{proof}

\section{Backward uniqueness}\label{sec:bu}

We will show in this section $|\nabla v^0| \neq 0$  a.e. on $[0,1]\times \Real^d$. Denote $\Bu := \nabla v^0= (u^1,\dots,u^d)$. The following result presents the properties  that $\Bu$ satisfies.

\begin{lem}\label{lem:Bu-sys}
Suppose Assumptions \ref{ass:X} and \ref{ass:BSDE} hold. Then $\Bu \in W^{1,2}_2((0,1)\times \R^d)$ and there are $V$ and $W: (0,1)\times \R^d \rightarrow \R$ with
\begin{equation}\label{VW-int}
 \|V\|_{(L^\infty+L^{d+2})((0,1)\times \R^d)} + \|W\|_{L^\infty((0,1)\times \R^d)} \leq \lambda^{-1},
\end{equation}
for some $\lambda\in(0,1)$ such that the components of $\boldsymbol u$ verify inequalities
\begin{equation}\label{u-diff-bd}
|Pu^j| \leq W |\nabla \Bu| + V|\Bu|, \quad\text{over}\  [0,1)\times\Real^d\ \text{for}\ j=1, \dots, d,
\end{equation}
and with $P:= \partial_t + \tfrac12 \nabla \cdot \left(\boldsymbol A \nabla\ \right)$.
\end{lem}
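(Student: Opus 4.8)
The plan is to differentiate in space the scalar equation satisfied by $v^0$ and read off the structure \eqref{u-diff-bd}. Although the full system \eqref{v-pde-sys} is not yet known to hold, the component $v^0$ of the candidate $\boldsymbol v$ built in \textsection\ref{sec:limit_soln} \emph{does} satisfy a genuine Cauchy problem: $f^0$ is continuous on all of $\R^{(I+1)\times d}$ (it vanishes on $\Xi$ and is locally Lipschitz, so locally $|f^0(\boldsymbol z)|\le C|\boldsymbol z^0|$), so passing to the limit in \eqref{vnm-sys} along a subsequence for which $\boldsymbol v_n\to\boldsymbol v$ and $\nabla\boldsymbol v_n\to\nabla\boldsymbol v$ locally uniformly (Arzel\`a--Ascoli, via the uniform local bounds of Corollary \ref{cor:vnm-reg}), and using that the truncations $\eta_n,\Pi_n,\varphi_n$ disappear in the limit, gives
\begin{equation*}
\partial_t v^0+\mathcal{L}v^0+f^0(\nabla\boldsymbol v\,\boldsymbol\sigma)=0\quad\text{on }[0,1)\times\R^d,\qquad v^0(1,\cdot)=g^0.
\end{equation*}
Since $\boldsymbol v\in W^{1,2}_{p,loc}$ for every $p<\infty$ (Corollary \ref{cor:vnm-reg}(i)), $\nabla\boldsymbol v$ is locally H\"older, hence $f^0(\nabla\boldsymbol v\,\boldsymbol\sigma)$ is locally H\"older and Schauder theory upgrades $v^0$ to $C^{1+\gamma/2,2+\gamma}_{loc}$; thus $u^j=\partial_{x^j}v^0$ is a classical object and the equation may be differentiated in $x^j$ (rigorously through interior difference quotients).

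Carrying out the differentiation and re-expressing $\partial_t+\mathcal{L}$ through $P=\partial_t+\tfrac12\nabla\cdot(\boldsymbol A\nabla\ )$, one obtains $Pu^j=-\partial_{x^j}\big(f^0(\nabla\boldsymbol v\,\boldsymbol\sigma)\big)+R^j$, where $R^j$ is a sum of terms of the form (bounded coefficient) times $\Bu$ or $\nabla\Bu$; boundedness of the coefficients (which involve $b$, $\boldsymbol A=\boldsymbol\sigma\boldsymbol\sigma^\top$ and their first derivatives) is Assumption \ref{ass:X}(i), so $|R^j|\le C(|\nabla\Bu|+|\Bu|)$. For the nonlinear term, write $Z^{im}=\tsum_n\partial_{x^n}v^i\,\sigma^{nm}$, so that
\begin{equation*}
\partial_{x^j}\big(f^0(\boldsymbol Z)\big)=\tsum_{i,m,n}(\partial_{z^{im}}f^0)(\boldsymbol Z)\big(\sigma^{nm}\partial_{x^n}\partial_{x^j}v^i+(\partial_{x^j}\sigma^{nm})\,\partial_{x^n}v^i\big),
\end{equation*}
and split $i=0$ from $i\ge1$. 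Because $\boldsymbol Z=\nabla\boldsymbol v\,\boldsymbol\sigma$ is bounded, Assumption \ref{ass:BSDE}(iv) gives $|(\nabla f^0)^0(\boldsymbol Z)|\le M|\boldsymbol Z|\le C$, so the $i=0$ contribution is $\le C(|\nabla\Bu|+|\Bu|)$ (here $\partial_{x^n}\partial_{x^j}v^0=\partial_{x^n}u^j$ and $\nabla\boldsymbol v$ is bounded); and $|(\nabla f^0)^i(\boldsymbol Z)|\le M|\boldsymbol Z^0|=M|\Bu\,\boldsymbol\sigma|\le C|\Bu|$ for $i\ge1$, so the $i\ge1$ contribution is $\le C|\Bu|\big(1+\tsum_{i\ge1}|\nabla^2 v^i|\big)$.

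Collecting, $|Pu^j|\le W|\nabla\Bu|+V|\Bu|$ for $j=1,\dots,d$, with $W\equiv C\in L^\infty((0,1)\times\R^d)$ and $V:=C\big(1+\tsum_{i\ge1}|\nabla^2 v^i|\big)$, whose bounded part lies in $L^\infty$ and whose remaining part lies in $L^{d+2}((0,1)\times\R^d)$ by Proposition \ref{prop:global-sobolev}; taking $\lambda\in(0,1)$ small enough (and, if wished, not larger than the ellipticity constant of $\boldsymbol A$) yields \eqref{VW-int}. It remains to check $\Bu\in W^{1,2}_2((0,1)\times\R^d)$: $u^j$ solves the linear equation $\partial_t u^j+\mathcal{L}u^j=h^j$ with $|h^j|\le C|\nabla\Bu|+C|\Bu|+C\|\Bu\|_{L^\infty}\tsum_{i\ge1}|\nabla^2 v^i|$, and since $\nabla\Bu=\nabla^2 v^0$, $\Bu=\nabla v^0$ and $\nabla^2 v^i$ all belong to $L^2((0,1)\times\R^d)$ by Proposition \ref{prop:global-sobolev} while $\Bu\in L^\infty$, we get $h^j\in L^2$; together with $u^j(1,\cdot)=\partial_{x^j}g^0\in W^1_2(\R^d)$ (Assumption \ref{ass:X}(iv)), the global $W^{1,2}_2$-estimate for linear parabolic equations with bounded, continuous, uniformly elliptic coefficients (\cite[Chapter IV, Theorem 9.1]{oLadyzhenskaya1968}) gives $u^j\in W^{1,2}_2$ for each of the finitely many $j$.

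The main obstacle is the term $\partial_{x^j}f^0(\nabla\boldsymbol v\,\boldsymbol\sigma)$: its expansion a priori contains $\nabla^2 v^i$ for $i\ge1$, which are only $L^{d+2}$-integrable and are not controlled by $\Bu$ or $\nabla\Bu$. It is precisely the bound $|(\nabla f^0)^i(\boldsymbol z)|\le M|\boldsymbol z^0|$ of Assumption \ref{ass:BSDE}(iv), combined with the identity $\boldsymbol z^0=\Bu\,\boldsymbol\sigma$ along the solution, that forces these dangerous second-derivative terms to carry the factor $|\Bu|$, so that they are absorbed into $V|\Bu|$ with $V\in L^\infty+L^{d+2}$ --- exactly the class of zero-order potentials admitted by the backward uniqueness Theorem \ref{thm:BU}. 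Secondary technical nuisances are the bookkeeping between $\mathcal{L}$ and $P$, and making the differentiation of the $v^0$-equation rigorous given only local $W^{1,2}_p$ a priori regularity of $\boldsymbol v$.
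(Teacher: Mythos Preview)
Your proof is correct and follows essentially the same route as the paper's: differentiate the scalar equation for $v^0$, expand $\partial_{x^j}f^0(\nabla\boldsymbol v\,\boldsymbol\sigma)$ by the chain rule, and use the structural bound $|(\nabla f^0)^i(\boldsymbol z)|\le M|\boldsymbol z^0|$ from Assumption \ref{ass:BSDE}(iv) to absorb the dangerous $\nabla^2 v^i$ ($i\ge 1$) terms into $V|\Bu|$ with $V\in L^\infty+L^{d+2}$, then invoke Proposition \ref{prop:global-sobolev} and the linear $W^{1,2}_2$-estimate. Your explicit justification that the $v^0$-equation already holds for the candidate $\boldsymbol v$ (because $f^0$ is continuous on all of $\R^{(I+1)\times d}$, so no backward uniqueness is needed to pass to the limit in the $i=0$ component) is a point the paper's proof leaves implicit; it is a worthwhile clarification since Lemma \ref{lem:Bu-sys} is used \emph{before} the full system \eqref{v-pde-sys} is verified.
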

To show that $|\Bu|\neq 0$ a.e., we need the following \emph{Backward Uniqueness} result.
\begin{thm}[Backward Uniqueness]\label{thm:BU}
Suppose that
 \begin{enumerate}
  \item[(i)] the vector valued function $\Bu: [0,1]\times \R^d \rightarrow \R^d$ satisfies $\Bu \in  W^{1,2}_2 ((0,1)\times \R^d)$ and \eqref{u-diff-bd} with \eqref{VW-int} hold.
  \item[(ii)] the matrix valued function $\boldsymbol A$ satisfies \eqref{uni-ell} and is globally Lipschitz with respect to both the time and the space variables over $[0,1]\times\Real^d$.
 \end{enumerate}
 Then, if the set
$$E =\{(t,x) \in [0,1) \times \R^d \,:\, |\Bu(t,x)| =0\}$$
has positive Lebesgue measure in $(0,1)\times \R^d$, $\Bu \equiv 0$ over $[0,1]\times \R^d$
\end{thm}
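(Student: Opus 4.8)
The plan is to combine a space-like strong unique continuation property with a backward-in-time uniqueness statement, both obtained from Carleman estimates, closing the remaining (elementary) direction in time by an energy argument. Throughout I would use Lemma \ref{lem:Bu-sys}: $\Bu\in W^{1,2}_2((0,1)\times\R^d)$ and each component satisfies the differential inequality $|Pu^j|\le W|\nabla\Bu|+V|\Bu|$ with $\|V\|_{L^\infty+L^{d+2}}+\|W\|_{L^\infty}\le\lambda^{-1}$. Since $V\in L^{d+2}$ is subcritical for the parabolic scaling in the $d+1$ variables, interior parabolic regularity applied to this inequality gives a locally bounded (indeed locally H\"older continuous) representative of $\Bu$, while the global bound $\Bu\in W^{1,2}_2$ forces $\Bu(t,\cdot)$ and $\nabla\Bu(t,\cdot)$ to decay at spatial infinity in an $L^2$-averaged sense; this decay is exactly what lets every estimate below be carried out on the unbounded domain $\R^d$.

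\textbf{Step 1: from a positive-measure zero set to a full time slice.} Since $|E|>0$ in $(0,1)\times\R^d$, Fubini's theorem produces a time $\tau_0\in(0,1)$ for which $E_{\tau_0}:=\{x:(\tau_0,x)\in E\}$ has positive Lebesgue measure in $\R^d$; fix a point $x_0$ of Lebesgue density $1$ of $E_{\tau_0}$. I claim $\Bu(\tau_0,\cdot)\equiv 0$ on $\R^d$. By local boundedness of $\Bu$ and the density of $E_{\tau_0}$ at $x_0$, the $L^2$-mass of $\Bu(\tau_0,\cdot)$ on $B_r(x_0)$ is $o(r^d)$, and interior parabolic estimates upgrade this to smallness of $\Bu$ on small backward parabolic cylinders at $(\tau_0,x_0)$. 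A quantitative unique continuation / propagation-of-smallness argument based on the Carleman inequality of Lemma \ref{T: teormea2} then forces $\Bu$ to vanish to infinite order at $(\tau_0,x_0)$ in the backward parabolic sense, and the space-like strong unique continuation property — the same family of Carleman estimates, adapted from the elliptic arguments of \cite{Regbaoui} to the operator $P$ — yields $\Bu(\tau_0,\cdot)\equiv 0$ on all of $\R^d$. The delicate point is that $V$ is unbounded: this is handled by keeping the extra term in Lemma \ref{T: teormea2} (the term discarded in \cite{EscauriazaFernandez}) to absorb $\|V\Bu\|_{L^2}$ via H\"older's inequality and the $L^{d+2}$-integrability of $V$.

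\textbf{Step 2: backward uniqueness in time.} Given $\Bu(\tau_0,\cdot)\equiv 0$, I would show $\Bu\equiv 0$ on $[\tau_0,1]\times\R^d$. After the substitution $s=1-t$ this is backward uniqueness for a forward parabolic inequality, proved by the Carleman/log-convexity method in the spirit of \cite{EscauriazaFernandezVessella}: with Gaussian-type weights $e^{\varphi}$ adapted to the level sets of $|x|^2/(t-\tau_0)$ one derives from Lemma \ref{T: teormea2} a log-convexity inequality for the weighted energies $t\mapsto\int_{\R^d}|\Bu(t,x)|^2 e^{2\varphi(t,x)}\,dx$ on subintervals of $[\tau_0,1]$; the $L^\infty$ first-order term $W|\nabla\Bu|$ is absorbed in the usual way, and the unbounded $V|\Bu|$ is again controlled through the additional Carleman term together with $V\in L^{d+2}$ and the spatial decay of $\Bu$. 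Vanishing at $\tau_0$ combined with this log-convexity forces $\Bu\equiv 0$ first on a short interval $[\tau_0,\tau_0+h]$ and then, by iteration on a finite partition of $[\tau_0,1]$, on all of $[\tau_0,1]\times\R^d$.

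\textbf{Step 3: the forward direction and conclusion.} Propagating the zero set from $\tau_0$ toward $t=0$ is the elementary direction: in the variable $s=1-t$ it is uniqueness for the parabolic Cauchy problem, which follows from a Gronwall estimate for $\int_{\R^d}|\Bu(t,x)|^2\,dx$ — legitimate on $\R^d$ precisely because $\Bu\in W^{1,2}_2((0,1)\times\R^d)$ excludes Tychonoff-type growth — with the $L^{d+2}$-potential absorbed by parabolic Sobolev embedding. Hence $\Bu\equiv 0$ on $[0,1)\times\R^d$, and since $W^{1,2}_2((0,1)\times\R^d)\hookrightarrow C([0,1];L^2(\R^d))$ also $\Bu(1,\cdot)=0$, so $\Bu\equiv 0$ on $[0,1]\times\R^d$. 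I expect Step 1, together with the omnipresence of the unbounded potential, to be the main obstacle: converting the purely measure-theoretic hypothesis $|E|>0$ into infinite-order spatial vanishing at a single point and time requires a genuinely quantitative unique continuation statement, and every Carleman estimate in Steps 1--2 must be engineered (via the extra term in Lemma \ref{T: teormea2}) so that the $L^{d+2}$-part of $V$ is \emph{absorbed} rather than merely dominated — the technical core not covered by the literature recalled in the introduction.
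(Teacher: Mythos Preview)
Your overall architecture—strong unique continuation to kill a full time slice, then backward uniqueness forward in $t$, then energy/Gronwall toward $t=0$, then $C([0,1];L^2)$-continuity at $t=1$—matches the paper's, and Steps~2--3 are essentially correct (the paper normalizes the density point to $\tau_0=0$ and handles the forward direction implicitly, so your Step~3 is in fact more explicit). The gap is in Step~1.

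You slice $E$ by Fubini, pick $\tau_0$ with $|E_{\tau_0}|>0$, and take a \emph{spatial} density point $x_0\in E_{\tau_0}$. This yields only $\int_{B_r(x_0)}|\Bu(\tau_0,\cdot)|^2=o(r^d)$, i.e.\ first-order smallness at a single slice. You then assert that ``interior parabolic estimates upgrade this to smallness of $\Bu$ on small backward parabolic cylinders'', and that a Carleman inequality ``forces $\Bu$ to vanish to infinite order''. Neither step is justified: interior subsolution estimates go the other way (they bound the pointwise value by the cylinder average, not the cylinder by a single slice), and smallness at one slice does not propagate into a parabolic cylinder in the ill-posed direction; moreover Carleman inequalities \emph{propagate} an already established infinite-order zero, they do not produce one from $o(r^d)$-smallness. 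At a fixed time $\tau_0$, $\Bu(\tau_0,\cdot)$ satisfies no elliptic equation, so there is no spatial UCP to invoke either.

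The paper sidesteps this by choosing a \emph{space-time} Lebesgue density point $(\tau,z)$ of $E$ (these exist a.e.\ in $E$). A Caccioppoli/reverse-Sobolev estimate gives $\|\Bu\|_{L^{2(d+2)/d}(Q_r)}\lesssim r^{-1}\|\Bu\|_{L^2(Q_{2r})}$; writing $\|\Bu\|_{L^2(Q_{2r})}=\|\Bu\|_{L^2(Q_{2r}\setminus E)}\le |Q_{2r}\setminus E|^{1/(d+2)}\|\Bu\|_{L^{2(d+2)/d}(Q_{2r})}$ and using the \emph{parabolic} density $|Q_{2r}\setminus E|/|Q_{2r}|\to 0$ gives the self-improving inequality
\[
\|\Bu\|_{L^{2(d+2)/d}(Q_r)}\le \varepsilon\,\|\Bu\|_{L^{2(d+2)/d}(Q_{2r})}\quad\text{for all small }r\text{ and all }\varepsilon>0,
\]
which iterates to infinite-order parabolic vanishing at $(\tau,z)$—this is the parabolic analogue of Regbaoui's elliptic argument. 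Only \emph{after} this is the Carleman inequality invoked to spread the zero across the slice. (Incidentally, your reference ``Lemma~\ref{T: teormea2}'' is to the strong uniqueness statement itself, not to the Carleman estimate; the latter is Lemma~\ref{L:Carlemanclasica}.) Replace your Fubini-and-slice route by the space-time density point plus this reverse-Sobolev/iteration mechanism and Step~1 closes.
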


\begin{rem}\label{R: unadedum} When $V$ and $W$ are bounded functions over $[0,1]\times\R^d$, the same result holds provided that $\Bu$ is in $W^{1,2}_{2,\text{loc}} ((0,1)\times \R^d)$ and $|\boldsymbol u(x,t)|\le e^{N|x|^2}$ over $[0,1]\times\R^d$, for some $N\ge 1$,  and under some more constrain conditions on $\nabla \boldsymbol A$ over $[0,1]\times\R^d$. See \cite[Theorem 1.1]{tNguyen2010}, \cite[Theorem 1.2]{WuZhang1}  and the references there in.
\end{rem}
\begin{rem}
Under the hypothesis in Theorem  \ref{thm:BU}, the combination of the reasonings behind \cite[Theorem 2 (3) and (2.20)]{EscauriazaFernandezVessella}, \cite[Theorem 3]{Fernandez03} and the proof of Theorem \ref{thm:BU} imply that if $\boldsymbol u(1,\cdot)\not\equiv 0$ over $\R^d$, then 
$$\{x\in\R^d: \boldsymbol u(t,x)=0\}$$
has zero Lebesgue measure for all $0\le t <1$. Also, the combination of the reasonings behind \cite[Theorem 3 (2)]{EscauriazaFernandezVessella},  \cite[Theorem 1.1]{HanFangHua94} and the proof of Theorem \ref{thm:BU} imply that the Hausdorff dimension of 
$$\{(t,x)\in [0,1)\times\R^{d}: \boldsymbol u(t,x)=0\}$$
is less or equal than $d$, when $\boldsymbol u(1, \cdot)\not\equiv 0$ over $\R^d$.
\end{rem}

We will first prove Theorem \ref{thm:BU} and then come back to the proof of Lemma \ref{lem:Bu-sys} at the end of this section. 
The idea of the proof for Theorem \ref{thm:BU} is the following. First, by the Lebesgue differentiation theorem, there is some $(\tau,z)\in E$ such that
\begin{equation*}
\lim_{r\to 0^+}\frac{|E\cap Q_r(\tau,z)|}{|Q_r(\tau,z)|} =1,
\end{equation*} 
where $Q_r(\tau,z)$ denotes the backward parabolic cube $[\tau,\tau+r^2]\times B_r(z)$ and $Q_r = Q_r(0,0)$. Without loss of generality, we may assume that  $(\tau,z)=(0,0)$. Then, we show that $\boldsymbol u$ must have a zero of infinite order with respect to the $(t,x)$ variables at $(0,0)$. (See \cite{Regbaoui} for the elliptic analog.) Subsequently we use the Carleman inequality for parabolic operators with variable coefficients derived in \cite[Theorem 4]{EscauriazaFernandez}\footnote{Here one could also use the Carleman inequalities in \cite{KochTataru}.} to show that $\boldsymbol u(0,\cdot)\equiv 0$ on $\Real^d$. Then, by backward uniqueness and with a second Carleman inequality (see \cite[Theorem 3]{EscauriazaFernandez}), we derive $\boldsymbol u\equiv 0$ elsewhere.
\begin{lem}\label{lem:u-sobolev}
For sufficiently small $r$ depending on $\lambda$ and $d$
\begin{equation}\label{u-norm}
 \norm{\boldsymbol u}_{L^{\frac{2(d+2)}{d}}(Q_r)} \lec r^{-1}\,  \norm{\boldsymbol u}_{L^2 (Q_{2r})},
\end{equation}
where $C$ depends on $\lambda$ and $d$.
\end{lem}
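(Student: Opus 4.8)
The plan is to derive the estimate \eqref{u-norm} from the differential inequality \eqref{u-diff-bd} via a Caccioppoli-type energy estimate combined with the parabolic Sobolev (Gagliardo--Nirenberg) embedding $W^{1,2}_2(Q_{2r}) \hookrightarrow L^{2(d+2)/d}(Q_{2r})$. First I would fix a cut-off function $\eta \in C^\infty_c$ with $\eta \equiv 1$ on $Q_r$, $\operatorname{supp}\eta \subset Q_{2r}$, and the usual scaling bounds $|\nabla \eta| \lec r^{-1}$, $|\partial_t \eta| \lec r^{-2}$. Testing the inequality \eqref{u-diff-bd} (componentwise, with $P = \partial_t + \tfrac12 \nabla\cdot(\boldsymbol A \nabla\,)$) against $\eta^2 u^j$, summing over $j$, and integrating over the backward cube, one obtains after integration by parts and using the ellipticity bound \eqref{uni-ell}
\[
 \sup_{t}\tint_{B_{2r}} \eta^2 |\boldsymbol u|^2 \,\Id x + \lambda \tiint_{Q_{2r}} \eta^2 |\nabla \boldsymbol u|^2 \lec \tiint_{Q_{2r}}\big(|\nabla\eta|^2 + |\partial_t \eta|\,\eta\big)|\boldsymbol u|^2 + \tiint_{Q_{2r}} \eta^2\big(W|\nabla\boldsymbol u| + V|\boldsymbol u|\big)|\boldsymbol u|.
\]
The boundary term at the terminal time $t = r^2$ is handled using that the cube is backward in time (the sign of $\partial_t$ works in our favour), and the initial-time slice contributes nothing after we note $\eta$ vanishes there — or, more carefully, one runs the estimate on $[s, r^2]$ and takes a supremum, exactly as in the standard parabolic Caccioppoli argument.

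The next step is to absorb the right-hand side. The term $\eta^2 W |\nabla\boldsymbol u||\boldsymbol u|$ is controlled by Young's inequality: since $\|W\|_{L^\infty} \le \lambda^{-1}$, we split it as $\tfrac{\lambda}{2}\eta^2|\nabla\boldsymbol u|^2 + C\lambda^{-3}\eta^2|\boldsymbol u|^2$ and move the gradient piece to the left. For the zero-order term $\eta^2 V|\boldsymbol u|^2$, write $V = V_\infty + V_{d+2}$ with $\|V_\infty\|_{L^\infty} + \|V_{d+2}\|_{L^{d+2}} \le \lambda^{-1}$; the bounded part contributes $C\lambda^{-1}\tiint \eta^2|\boldsymbol u|^2$ directly, and the $L^{d+2}$ part is estimated by Hölder, $\tiint \eta^2 V_{d+2}|\boldsymbol u|^2 \le \|V_{d+2}\|_{L^{d+2}(Q_{2r})}\,\|\eta|\boldsymbol u|\|_{L^{2(d+2)/d}(Q_{2r})}^2$, followed by the parabolic Sobolev inequality $\|\eta|\boldsymbol u|\|_{L^{2(d+2)/d}}^2 \lec \|\nabla(\eta|\boldsymbol u|)\|_{L^2}^2 + \sup_t \|\eta|\boldsymbol u|\|_{L^2(B_{2r})}^2$. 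Here is where the smallness of $r$ enters: by absolute continuity of the integral $\|V_{d+2}\|_{L^{d+2}(Q_{2r})} \to 0$ as $r \to 0$, so for $r$ small depending on $\lambda$ and $d$ the Sobolev constant times $\|V_{d+2}\|_{L^{d+2}(Q_{2r})}$ is $\le \tfrac{\lambda}{4}$ and the corresponding gradient and energy terms can again be absorbed into the left-hand side. What survives on the right is $\lec (r^{-2} + \lambda^{-1} + 1)\tiint_{Q_{2r}}|\boldsymbol u|^2 \lec r^{-2}\tiint_{Q_{2r}}|\boldsymbol u|^2$ for $r$ bounded.

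Finally, combining the absorbed estimate with the parabolic Sobolev embedding applied to $\eta\boldsymbol u$ gives
\[
 \norm{\boldsymbol u}_{L^{2(d+2)/d}(Q_r)}^2 \le \norm{\eta\boldsymbol u}_{L^{2(d+2)/d}(Q_{2r})}^2 \lec \tiint_{Q_{2r}}\eta^2|\nabla\boldsymbol u|^2 + \sup_t\tint_{B_{2r}}\eta^2|\boldsymbol u|^2 + \tiint_{Q_{2r}}|\nabla\eta|^2|\boldsymbol u|^2 \lec r^{-2}\norm{\boldsymbol u}_{L^2(Q_{2r})}^2,
\]
which is \eqref{u-norm} after taking square roots; tracking the constants shows $C$ depends only on $\lambda$ and $d$. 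I expect the main obstacle to be the bookkeeping in the absorption step for the unbounded potential $V$: one must be careful that the Sobolev inequality for $\eta|\boldsymbol u|$ is applied on a domain ($Q_{2r}$) on which $\|V_{d+2}\|_{L^{d+2}}$ is genuinely small — this is exactly why the statement restricts to $r$ small depending on $\lambda$ and $d$ — and that the supremum-in-time term produced by the energy estimate is precisely the one needed to close the parabolic Sobolev inequality, so the two estimates must be run simultaneously rather than sequentially.
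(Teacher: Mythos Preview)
Your overall strategy---Caccioppoli energy estimate plus the parabolic Sobolev embedding, with absorption of the potential term for small $r$---is exactly the paper's approach. There is, however, a genuine gap in your treatment of the unbounded part of $V$.

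In the H\"older step you write $\tiint \eta^2 V_{d+2}|\boldsymbol u|^2 \le \|V_{d+2}\|_{L^{d+2}(Q_{2r})}\|\eta\boldsymbol u\|_{L^{2(d+2)/d}}^2$, but these exponents do not pair: the conjugate of $d+2$ is $(d+2)/(d+1)$, which would place $\eta\boldsymbol u$ in $L^{2(d+2)/(d+1)}$, not $L^{2(d+2)/d}$. The correct pairing is $\|V_{d+2}\|_{L^{(d+2)/2}(Q_{2r})}\|\eta\boldsymbol u\|_{L^{2(d+2)/d}}^2$. More seriously, your smallness argument invokes absolute continuity of $\int|V_{d+2}|^{d+2}$ to make $\|V_{d+2}\|_{L^{d+2}(Q_{2r})}\to 0$; the rate at which this decays depends on the particular function $V$, so the threshold on $r$ you obtain depends on $V$ and not only on $\lambda$ and $d$ as the lemma requires (this uniformity is what makes the subsequent vanishing-order argument go through with constants depending only on $\lambda,d$).

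The paper resolves both points at once: it applies H\"older with the $L^{(d+2)/2}$ norm of $V_2$ and then uses the quantitative bound (H\"older on the cube of measure $\sim r^{d+2}$)
\[
\|V_2\|_{L^{(d+2)/2}(Q_{2r})}\le |Q_{2r}|^{1/(d+2)}\|V_2\|_{L^{d+2}((0,1)\times\R^d)}\lec r\,\lambda^{-1},
\]
so the absorption succeeds once $Cr\lambda^{-1}$ is below a structural constant, yielding a threshold depending only on $\lambda$ and $d$. With this single correction your argument goes through and coincides with the paper's.
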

\begin{proof} Let $\varphi \in C^\infty_0 (Q_{2r})$ with $0\leq \varphi \leq 1$ and $\varphi \equiv 1$ in $Q_r$, with $Q_{2r}\subset [0,1)\times \Real^d$. Multiply $P u^i$ by $\varphi^2u^i$ and add up in $i$. Then, from the product rule
\begin{equation}\label{u2-equiv}
\begin{split}
&\varphi^2\boldsymbol u\cdot P\boldsymbol u= \varphi^2u^i\left(\partial_t u^i + \tfrac12\,\nabla\left(\boldsymbol A\nabla u^i\right)\right)\\
&= \tfrac12\, \partial_t |\varphi\boldsymbol u|^2 - |\boldsymbol u|^2 \varphi \partial_t \varphi + \tfrac14\,\nabla\cdot\left(\varphi^2 \mathbf A\nabla |\boldsymbol u|^2\right) - \tfrac12\,\mathbf A\nabla(\varphi u^i)\cdot \nabla(\varphi u^i) +  \tfrac{|\boldsymbol u|^2}2 \mathbf A\nabla\varphi\cdot\nabla\varphi
\end{split}
\end{equation}
Multiply \eqref{u2-equiv} by $-2$ and integrate the result over $[\tau, 4r^2] \times B_{2r}$ for $0\leq \tau \leq 4 r^2$ while using the fact that $\varphi=0$ on the boundary of $B_{2r}$.  Then, we get 
\begin{equation*}\label{int_u2}
\begin{split}
-2\tint_{\tau}^{4r^2} \tint_{B_{2r}}\varphi^2\boldsymbol u\cdot P\boldsymbol u\ \Id t\ \Id x=&\tint_{B_{2r}} |\varphi(\tau)\boldsymbol u(\tau)|^2 \ \Id x + \tint_{\tau}^{4r^2} \tint_{B_{2r}} \mathbf A\nabla\left(\varphi u_i\right)\cdot\nabla\left(\varphi u_i\right)\ \Id t \ \Id x \\
&+ 2\tint_{\tau}^{4r^2} \tint_{B_{2r}} |\boldsymbol u|^2 \left[\varphi \partial_t \varphi - \frac 12\,\mathbf A\nabla \varphi\cdot\nabla\varphi\right] \ \Id t \ \Id x.
\end{split}
\end{equation*}
It then follows from \eqref{u-diff-bd} that 
\begin{equation*}
\begin{split}
 -2\tint_{\tau}^{4r^2} \tint_{B_{2r}}\varphi^2\boldsymbol u\cdot P\boldsymbol u\ \Id t\ \Id x \lec & \tint_{\tau}^{4r^2} \tint_{B_{2r}}  \varphi^2 |\Bu | \big( W |\nabla \Bu | + V |\Bu |\big)\ \Id t \ \Id x\\
 \lec & \tint_{\tau}^{4r^2} \tint_{B_{2r}}  V (\varphi |\Bu|)^2+W |\varphi \Bu | |\nabla (\varphi \Bu)| +W |\varphi \nabla \varphi| |\Bu |^2\ \Id t \ \Id x.
\end{split}
\end{equation*}
The above  inequality, \eqref{uni-ell}, \eqref{VW-int}, the natural bounds satisfied by $\varphi$, $r^2|\partial_t\varphi|+r|\nabla\varphi|\lec 1$, and H\"older's inequality with $p=\frac{d+2}{2}$ imply that for $0 < r\le 1$
 \begin{multline}\label{E: int_u2}
\|\varphi \boldsymbol u\|_{L^\infty_t L^2_x([0,4r^2]\times \Real^d)}^2+ \|\nabla\left(\varphi \boldsymbol u\right)\|_{L^2([0,4r^2]\times\Real^d)}^2\lec r^{-2}\tint_{Q_{2r}} |\boldsymbol u|^2\ \Id t \ \Id x \\
+ \|V_2\|_{L^{\frac{d+2}2}(Q_{2r})}\|\varphi\boldsymbol u\|_{L^{\frac{2(d+2)}{d}}([0,4r^2]\times\Real^d)}^2+r\|\varphi \boldsymbol u\|_{L^\infty_tL^2_x([0,4r^2]\times \Real^d)}\|\nabla\left(\varphi \boldsymbol u\right)\|_{L^2([0,4r^2]\times\Real^d)},
 \end{multline}
where  $V=V_1+V_2$, with
 \begin{equation}\label{E: descomposi}
 \|V_1\|_{L^\infty((0,1)\times\Real^d)}+\|V_2\|_{L^{d+2}((0,1)\times\Real^d)}\le 2\lambda^{-1}.
 \end{equation}
The interpolation inequality in Lemma \ref{lem:ineq} (i), Jensen's inequality and \eqref{E: descomposi} give
\begin{equation}\label{phi-u-est2}
\norm{\varphi \boldsymbol u}_{L^{\frac{2(d+2)}{d}}([0,4r^2]\times\Real^d)}\leq  \norm{\varphi \boldsymbol u}_{L^\infty_tL_x^{2}([0,4r^2]\times\Real^d)} + \norm{\nabla(\varphi \boldsymbol u)}_{L^2([0,4r^2]\times\Real^d)}
\end{equation}
and
\begin{equation}\label{E: vayades}
 \|V_2\|_{L^{\frac{d+2}2}(Q_{2r})}\lec 2r\lambda^{-1}.
\end{equation}
Then, \eqref{E: int_u2} and \eqref{E: vayades} yield
 \begin{multline*}
\|\varphi \boldsymbol u\|_{L^\infty_t L^2_x([0,4r^2]\times \Real^d)}^2+ \|\nabla\left(\varphi \boldsymbol u\right)\|_{L^2([0,4r^2]\times\Real^d)}^2\lec r^{-2}\tint_{Q_{2r}} |\boldsymbol u|^2\ \Id t \ \Id x \\
 + r\left[\|\varphi \boldsymbol u\|_{L^\infty_t L^2_x([0,4r^2]\times \Real^d)}^2+\|\nabla\left(\varphi \boldsymbol u\right)\|_{L^2([0,4r^2]\times\Real^d)}^2\right].
 \end{multline*}
Now, if $r$ is small we can hide the second term on the right-hand side above on the left-hand side, which implies by \eqref{phi-u-est2}
\begin{equation*}
\norm{\varphi \boldsymbol u}_{L^{\frac{2(d+2)}{d}}([0,4r^2]\times\Real^d)}\lec \|\varphi \boldsymbol u\|_{L^\infty_t L^2_x([0,4r^2]\times \Real^d)}+ \|\nabla\left(\varphi \boldsymbol u\right)\|_{L^2([0,4r^2]\times\Real^d)}\lec r^{-1}\|\boldsymbol u\|_{L^2(Q_{2r})},
\end{equation*} 
for $0<r\le r(\lambda, d)$ and Lemma \ref{lem:u-sobolev} follows. 
\end{proof}

\begin{lem} 
If the set $E$ has positive Lebesgue measure on $[0,1)\times \Real^{d}$ and $(\tau,z)$ is a Lebesgue point of $E$, then $\boldsymbol u$ has a zero of infinite order at $(\tau,z)$; i.e., there is $R>0$ such that for all $m\ge 1$ there is $C_m$ with 
\begin{equation}\label{u-vanish}
 |\boldsymbol u(t,x)| \leq C_m (t-\tau + |x-z|^2)^{\frac{m}{2}},\quad (t,x)\in Q_R(\tau,z).
\end{equation}
\end{lem}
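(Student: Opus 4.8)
The plan is to establish the vanishing of infinite order by iterating a doubling-type inequality derived from the Sobolev estimate in Lemma \ref{lem:u-sobolev} together with the measure-density property at a Lebesgue point. Without loss of generality take $(\tau,z)=(0,0)$, so that $|E\cap Q_r|/|Q_r|\to 1$ as $r\to 0^+$. First I would combine Lemma \ref{lem:u-sobolev} with H\"older's inequality run in the opposite direction: since $\boldsymbol u$ vanishes on $E$ and $|Q_r\setminus E|/|Q_r|\to 0$, for any $r$ small enough we can write
\[
\norm{\boldsymbol u}_{L^2(Q_r)} = \norm{\boldsymbol u \mathbb{I}_{Q_r\setminus E}}_{L^2(Q_r)} \leq |Q_r\setminus E|^{\frac{1}{d+2}} \norm{\boldsymbol u}_{L^{\frac{2(d+2)}{d}}(Q_r)} =: \epsilon(r)\, \norm{\boldsymbol u}_{L^{\frac{2(d+2)}{d}}(Q_r)},
\]
where $\epsilon(r)\to 0$ as $r\to 0^+$ because $|Q_r\setminus E|/|Q_r|\to 0$ and $|Q_r|\sim r^{d+2}$. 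Feeding the conclusion \eqref{u-norm} of Lemma \ref{lem:u-sobolev} into the right-hand side gives
\[
\norm{\boldsymbol u}_{L^2(Q_r)} \leq C\,\epsilon(r)\, r^{-1}\, \norm{\boldsymbol u}_{L^2(Q_{2r})}.
\]

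Next I would iterate this inequality along the dyadic scales $r_k = R\,2^{-k}$. Fix $R$ small enough that $C\epsilon(r)r^{-1}$ is controlled; more precisely, choose $R$ so that for $r\leq R$ we have $C\epsilon(r)\leq \tfrac12$, hence $\norm{\boldsymbol u}_{L^2(Q_r)} \leq \tfrac{1}{2r}\norm{\boldsymbol u}_{L^2(Q_{2r})}$. Actually, to obtain arbitrary polynomial order one should keep track of the rate at which $\epsilon(r)\to 0$: for each $m\geq 1$ there is $R_m>0$ with $C\epsilon(r)r^{-1}\leq (2r)^{-1}\cdot 2^{-?}$... — the clean way is to note that for any fixed $m$ and all $r\leq R_m$ we may assume $C\epsilon(r) \leq r^{m}$, whence $\norm{\boldsymbol u}_{L^2(Q_r)} \leq r^{m-1}\norm{\boldsymbol u}_{L^2(Q_{2r})}$. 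Iterating from scale $r_k$ to the fixed scale $R_m$ yields $\norm{\boldsymbol u}_{L^2(Q_{r_k})} \leq C_m\, r_k^{m}$ after summing the geometric exponents (the accumulated power of $2$ being harmless up to adjusting $m$). Finally I would pass from this $L^2$ decay on parabolic cubes to the pointwise bound \eqref{u-vanish} using interior parabolic regularity: by Lemma \ref{lem:Bu-sys}, $\boldsymbol u\in W^{1,2}_2$ and satisfies the differential inequality \eqref{u-diff-bd} with $V\in L^\infty+L^{d+2}$ and $W\in L^\infty$, so local parabolic $L^p$ or De Giorgi--Nash--Moser estimates on $Q_{r_k}$ (rescaled) give $\sup_{Q_{r_k/2}}|\boldsymbol u| \leq C\, r_k^{-(d+2)/2}\norm{\boldsymbol u}_{L^2(Q_{r_k})} \leq C_m' r_k^{m-(d+2)/2}$; since $m$ is arbitrary this is exactly the infinite-order vanishing \eqref{u-vanish}.

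The main obstacle I anticipate is the bookkeeping in the iteration — making the factor $C\epsilon(r)r^{-1}$ genuinely work to produce \emph{every} polynomial order rather than just some fixed power, which requires using the quantitative decay of the density $|Q_r\setminus E|/|Q_r|$ and not merely that it tends to zero; one must be slightly careful that the $r^{-1}$ loss per dyadic step does not overwhelm the gain from $\epsilon(r)$, and the summation of exponents across scales must be handled so the constants $C_m$ stay finite for each fixed $m$. A secondary technical point is justifying the rescaled interior regularity estimate uniformly down to small scales, which uses the scale-invariance of the class of coefficients (uniform ellipticity \eqref{uni-ell} and the bounds \eqref{VW-int} on $V,W$, noting that under parabolic rescaling $x\mapsto r x$, $t\mapsto r^2 t$ the $L^{d+2}$ part of $V$ scales favorably); this is routine but should be stated carefully since $V$ is unbounded.
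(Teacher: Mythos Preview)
Your approach is the paper's. The only real issue is the iteration step, which you flag as the ``main obstacle'' but which is in fact straightforward and does \emph{not} require any quantitative rate on the density. Note first that there is no $r^{-1}$ loss per step: with your notation $\epsilon(r)=|Q_r\setminus E|^{1/(d+2)}$ and $|Q_r|\sim r^{d+2}$, the full iteration factor is
\[
C\,\epsilon(r)\,r^{-1}\ \sim\ C\Bigl(\tfrac{|Q_r\setminus E|}{|Q_r|}\Bigr)^{1/(d+2)}\longrightarrow 0\quad\text{as }r\to 0^+,
\]
so the $r^{-1}$ is already absorbed. For each fixed $m\ge 1$ choose $R_m>0$ so that this factor is at most $2^{-m}$ for all $0<r\le R_m$; iterating $\|\boldsymbol u\|_{L^2(Q_r)}\le 2^{-m}\|\boldsymbol u\|_{L^2(Q_{2r})}$ along $r=R_m2^{-k}$ gives
\[
\|\boldsymbol u\|_{L^2(Q_{R_m2^{-k}})}\le 2^{-mk}\,\|\boldsymbol u\|_{L^2(Q_{R_m})}=(r/R_m)^{m}\,\|\boldsymbol u\|_{L^2(Q_{R_m})},
\]
i.e.\ $\|\boldsymbol u\|_{L^2(Q_r)}\le C_m r^m$. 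Your assertion that one ``may assume $C\epsilon(r)\le r^m$'' is false in general at a Lebesgue point and, as the computation above shows, unnecessary. The paper runs exactly this iteration, only in the $L^{2(d+2)/d}$ norm rather than $L^2$; the difference is cosmetic.

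The final passage from integral decay on cubes to the pointwise bound via a local $L^\infty$ estimate for subsolutions is exactly what the paper does, citing \cite[Theorem 6.17]{Lieberman}; the $L^{d+2}$ integrability of the unbounded part of $V$ is precisely the parabolically scale-invariant condition that makes this estimate hold uniformly in $r$, so your secondary concern is routine.
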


\begin{proof} Without loss of generality and after a translation we may assume that $(\tau,z)= (0,0)$. Then,  starting with the right-hand side of \eqref{u-norm}, 
\begin{equation}\label{E:12}
\begin{split}
&\norm{\boldsymbol u}_{L^2 (Q_{2r})} = \norm{\boldsymbol u}_{L^2 (Q_{2r}\setminus E)}\le |Q_{2r}\setminus E|^{\frac1{d+2}}\norm{\boldsymbol u}_{L^{\frac{2(d+2)}{d}}(Q_{2r}\setminus E)}\\
&\lec r\left(\frac{|Q_{2r}\setminus E|}{|Q_{2r}|}\right)^{\frac1{d+2}}\norm{\boldsymbol u}_{L^{\frac{2(d+2)}{d}}(Q_{2r})}
\end{split}
\end{equation} where the first inequality follows from Jensen's inequality. It then follows from \eqref{u-norm} and \eqref{E:12} that for $0<r\le r(\lambda, d)$ 
 \begin{equation*}\label{u-norm3}
 \norm{\boldsymbol u}_{L^{\frac{2(d+2)}{d}}(Q_r)} \lec \left(\frac{|Q_{2r}\setminus E|}{|Q_{2r}|}\right)^{\frac1{d+2}}\norm{u}_{L^{\frac{2(d+2)}{d}}(Q_{2r})},
 \end{equation*}
 where the constant $C$ is independent of $r$. Now, because $(0,0)$ is a Lebesgue point of $E$
 \begin{equation*}
 \lim_{r\rightarrow 0^+} \frac{|Q_{2r}\setminus E|}{|Q_{2r}|} = 0
 \end{equation*}
and for all $\epsilon>0$, there is some $r_\epsilon>0$ such that
 \begin{equation*}
 \norm{\boldsymbol u}_{L^{\frac{2(d+2)}{d}}(Q_r)}  \leq \epsilon \,\norm{\boldsymbol u}_{L^{\frac{2(d+2)}{d}}(Q_{2r})},\quad 0<r\leq r_\epsilon.
 \end{equation*}
 The iteration of the previous inequality implies that 
 \begin{equation*}
 \lim_{r\to 0} r^{-m} \norm{\boldsymbol u}_{L^{\frac{2(d+2)}{d}}(Q_r)}=0, \quad m\geq 1.
 \end{equation*}
 Finally, standard estimates for sub-solutions to parabolic equations \cite[Theorem 6.17]{Lieberman}),  - which are well known to extend for vector solutions to parabolic systems with a diagonal principal part - \eqref{u-diff-bd}, \eqref{VW-int} and \eqref{uni-ell} imply that with constants depending
on $\lambda$ and $d$
\begin{equation*}\label{E:101}
\max_{Q_r}|\boldsymbol u|\lec \dashint_{Q_{2r}} |\boldsymbol u|\, ds dy,\quad 0\le r\le\frac12.
\end{equation*}
The last two facts show that \eqref{u-vanish} holds for $R$ small and $(\tau, z) = (0,0)$.
\end{proof}
\begin{lem}[Strong uniqueness]\label{T: teormea2} Assume that $\boldsymbol u$ has a zero of infinite order with respect to the $(t,x)$ variables at $(0,0)$. Then, $\boldsymbol u(0,x)\equiv 0$ over $\Real^d$.
\end{lem}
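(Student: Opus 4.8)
The plan is to localize near the space-time point of infinite-order vanishing and run a Carleman argument with a Gaussian-type weight that is singular at that point, adapting the inequality of \cite[Theorem~4]{EscauriazaFernandez} but keeping on its left-hand side an extra lower-order term; this extra term is what allows the $L^{d+2}$-part of the potential $V$ to be handled.

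After a translation, assume the point of infinite-order vanishing is $(0,0)$. For a large parameter $a$ I would introduce a weight $\mathcal{G}_a$ on $(0,1)\times\R^d$ of the schematic form $\mathcal{G}_a(x,t)=t^{-a}G(x,t)$, where $G$ is a heat-kernel-type factor adapted to $\boldsymbol A$ (Gaussian in $x$ and singular as $t\downarrow 0$; when $\boldsymbol A\equiv\mathrm{Id}$ one may take $G(x,t)=t^{-d/2}e^{-|x|^2/(\beta t)}$). Since $\mathcal{G}_a$ decays in $x$, no spatial cut-off is needed; instead put $h:=\chi(t)\,\boldsymbol u$ with $\chi$ a temporal cut-off, $\chi\equiv1$ on $[0,T_0]$ and $\chi\equiv0$ on $[T_1,1)$ for some $0<T_0<T_1<1$. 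The hypothesis that $\boldsymbol u$ vanishes to infinite order at $(0,0)$, together with $\boldsymbol u\in W^{1,2}_2((0,1)\times\R^d)$, makes every $\mathcal{G}_a$-weighted integral below finite and $h$ admissible for the Carleman inequality
\[
a\int \mathcal{G}_a^2\Big(\tfrac{|h|^2}{t}+|\nabla h|^2+\tfrac{|x|^2}{t^2}|h|^2\Big)\,\Id x\,\Id t\ \le\ C\int \mathcal{G}_a^2\,t\,|Ph|^2\,\Id x\,\Id t,
\]
the third left-hand term being the one absent from \cite{EscauriazaFernandez}; one also records, as in the proof of Lemma~\ref{lem:u-sobolev}, the companion bound controlling $\sup_t\|\mathcal{G}_a h(\cdot,t)\|_{L^2_x}$.

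Since $\chi=\chi(t)$, one has $Ph^j=\chi\,Pu^j+\chi'(t)u^j$, with $\chi'(t)u^j$ supported in $[T_0,T_1]\times\R^d$; by \eqref{u-diff-bd}, $|\chi Pu^j|\le W|\nabla h|+V|h|$. Writing $V=V_1+V_2$ as in \eqref{VW-int} with $\|V_1\|_{L^\infty},\|V_2\|_{L^{d+2}}\le 2\lambda^{-1}$, the terms $\int\mathcal{G}_a^2 t\big(W^2|\nabla h|^2+V_1^2|h|^2\big)$ are absorbed into the left-hand side once $a\ge a_0(\lambda)$ (using $t\le1$). For the unbounded part, H\"older with exponents $\tfrac{d+2}{2},\tfrac{d+2}{d}$ gives $\int\mathcal{G}_a^2 t\,V_2^2|h|^2\le\|V_2\|_{L^{d+2}}^2\,\|\mathcal{G}_a h\|_{L^{2(d+2)/d}}^2$, and the parabolic Sobolev embedding $L^\infty_tL^2_x\cap L^2_tH^1_x\hookrightarrow L^{2(d+2)/d}$ applied to $\mathcal{G}_a h$ — whose spatial gradient is dominated by $\mathcal{G}_a|\nabla h|+\tfrac{|x|}{t}\mathcal{G}_a|h|$, i.e. by the three left-hand terms — bounds this by $\tfrac{C\lambda^{-2}}{a}\times(\text{left-hand side})$, absorbed for $a$ large. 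The commutator contribution $\int_{[T_0,T_1]\times\R^d}\mathcal{G}_a^2 t\,|\chi' u|^2$ is, using $t\ge T_0$ on its support and $\boldsymbol u\in L^2((0,1)\times\R^d)$, at most $C\,T_0^{-2a}\,C_0$ with $C_0$ independent of $a$. This yields $a\int\mathcal{G}_a^2\tfrac{|h|^2}{t}\le C\,T_0^{-2a}C_0$. On $[0,T_0]$, where $h=\boldsymbol u$, one has $\mathcal{G}_a(x,t)^2\ge T_0^{-2a}K(x,t)$ for a fixed positive kernel $K$ (because $t^{-a}\ge T_0^{-a}$ there), so $a\int_0^{T_0}\!\!\int_{\R^d} K\,\tfrac{|\boldsymbol u|^2}{t}\,\Id x\,\Id t\le C\,C_0$ for every $a$; letting $a\to\infty$ forces $\boldsymbol u\equiv0$ a.e. on $(0,T_0)\times\R^d$, hence, by continuity of $\boldsymbol u$, $\boldsymbol u(0,\cdot)\equiv0$ on $\R^d$.

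The main obstacle is the Carleman inequality itself: establishing it with the extra term $a\int\mathcal{G}_a^2\tfrac{|x|^2}{t^2}|h|^2$ and with the companion $L^\infty_tL^2_x$ estimate, and tracking the $a$-dependence of every constant precisely enough that the Sobolev-then-H\"older treatment of the $L^{d+2}$-potential term can be absorbed on the left. This is exactly the step that is not covered by \cite{EscauriazaFernandez}, and it is why the argument must be reproduced rather than merely quoted.
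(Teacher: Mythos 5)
The overall strategy — a Carleman inequality with a singular Gaussian-type weight centered at the infinite-order vanishing point, keeping on the left an extra term so that the $L^{d+2}$-part of $V$ can be absorbed after Hölder and a parabolic Sobolev embedding — is the same as the paper's, and the treatment of $V=V_1+V_2$ matches what is done around \eqref{E:25}. But the proof as written has a genuine gap, concentrated in the form of the Carleman inequality you assert and, as a consequence, in the conclusion step.

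The inequality you state, $a\int\mathcal{G}_a^2\big(\tfrac{|h|^2}{t}+|\nabla h|^2+\tfrac{|x|^2}{t^2}|h|^2\big)\lec\int\mathcal{G}_a^2\,t\,|Ph|^2$, has no error term. For a variable-coefficient parabolic operator such an error term cannot be removed: both \cite[Theorem 4]{EscauriazaFernandez} and the paper's Lemma~\ref{L:Carlemanclasica} carry the contribution $e^{N\alpha}\gamma^{\alpha+N}\big[\|v\|_{L^2}+\|\nabla v\|_{L^2}\big]$ on the right, and the weight $\sigma^{-\alpha}G$ (not $t^{-\alpha}G$) is needed precisely because the time-reparametrization $\sigma$ is what controls the commutators coming from $\nabla\boldsymbol A$; moreover the inequality only holds for test functions supported in $t\le 1/(2\gamma)$ with $\gamma\sim\alpha$, so the temporal cut-off cannot be at a fixed $T_0$ independent of the Carleman parameter $a$. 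Once the error term and the $a$-dependent temporal support are restored, your final step collapses: on the left you get something like $a\,T_0^{-2a}\int_0^{T_0}K\,\tfrac{|\boldsymbol u|^2}{t}$ with $T_0\sim1/a$, while the right picks up $e^{Na}\gamma^{a+N}\big[\|\boldsymbol u\|_{L^2}+\|\nabla\boldsymbol u\|_{L^2}\big]$; dividing by $T_0^{-2a}$ no longer gives a bound uniform in $a$, so letting $a\to\infty$ does not force $\boldsymbol u\equiv0$ on a fixed time slab. This is not a technicality: the claim that $\boldsymbol u\equiv0$ a.e.\ on $(0,T_0)\times\R^d$ would bundle strong unique continuation and backward uniqueness into one step, but the latter is a separate and harder fact (the paper proves it afterwards in Lemma~\ref{T: teormea1} with a different Carleman weight); a single weight singular only at $t=0$ cannot deliver it.

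What the Carleman estimate actually yields, after taking $\epsilon\to0^+$ and exploiting $\sigma^{-\alpha}G^{1/2}\le e^{N\alpha}\alpha^\alpha$ on the support of the cut-off commutators, is $\|t^{-\alpha}e^{-|x|^2/8t}\boldsymbol u\|_{L^2([0,1]\times B_1)}\le Ne^{N\alpha}\alpha^{\alpha}\big[\|\boldsymbol u\|_{L^2([0,1]\times B_2)}+\|\nabla\boldsymbol u\|_{L^2([0,1]\times B_2)}\big]$ for all $\alpha\ge N$. This is a family of inequalities, not a single one; only after dividing by $\alpha!\,e^{2N\alpha}2^{\alpha}$, summing over $\alpha$ (Stirling), and invoking the local sup bound for subsolutions does one get the quantitative Gaussian decay $|\boldsymbol u(x,t)|\le Ne^{-1/(Nt)}[\cdots]$ for $|x|\le1/\sqrt N$, $0<t<1/4$. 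Even then the conclusion is only that $\boldsymbol u$ vanishes to infinite order at $(0,y)$ for $|y|<1/\sqrt N$, and a spatial propagation — recentering the whole argument at those nearby points and iterating — is required to reach $\boldsymbol u(0,\cdot)\equiv0$ on all of $\R^d$. These three ingredients (the intrinsic error term with the $\alpha$-dependent time window, the Stirling/summation step, and the propagation over spatial centers) are absent from your proposal and cannot be sidestepped.

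A smaller but real point: without a spatial cut-off, your $h=\chi(t)\boldsymbol u$ is not compactly supported in $x$, so it is not in the admissible class for the Carleman inequality of \cite{EscauriazaFernandez} or Lemma~\ref{L:Carlemanclasica}; relying on the Gaussian decay of the weight to dispense with the spatial cut-off would require re-deriving the Carleman estimate for non-compactly supported test functions, which is additional work you have not addressed.
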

Lemma \ref{T: teormea2} follows from the following Carleman inequality \cite[Theorem 4]{EscauriazaFernandez}. See also \cite{escauriaza2000}, \cite{EscauriazaVega2001}, \cite[p. 148]{lEscauriaza2004a}, \cite[\S 3]{lEscauriaza2004b} or \cite[Prop. 6.1]{Escauriaza_2003c} for similar Carleman inequalities for cases where the leading part of $P$ is the backward heat operator. { Comparing to the Carleman inequality in \cite[Theorem 4]{EscauriazaFernandez}, a term in \eqref{E:01} below, which is dropped in \cite{EscauriazaFernandez}, is utilized in the lemma below to control the unbounded part of the zero order potential.}

\begin{lem}\label{L:Carlemanclasica} Assume that $\tfrac 12\mathbf A(0,0)$ is the identity matrix. Then, 
there are $N$ and $0<\delta\le 1$ depending only on $\lambda$ and $d$ such that with $\gamma =2\alpha/\delta^2$, for each $\alpha\ge N$ there is an increasing $C^\infty$ function $\sigma: (0,1]\longrightarrow [0,+\infty)$ verifying 
\begin{equation}\label{barbaridad}
t/N\le \sigma (t)\le t,\quad 0\le t\le 1/2\gamma,
\end{equation} 
and such that the inequality
\begin{multline}\label{E:5}
\alpha^{\frac{d}{2\left(d+2\right)}}\|\sigma^{\frac12-\alpha}G^{\frac12}v\|_{L^{\frac{2\left(d+2\right)}{d}}([0,1]\times\Real^d)} +\alpha\|\sigma^{-\alpha}G^{\frac12}v\|_{L^2([0,1]\times\Real^d)}+\sqrt{\alpha}\|\sigma^{\frac12-\alpha}G^{\frac12}\nabla v\|_{L^2([0,1]\times\Real^d)}\\
\le N \|\sigma^{\frac{1}{2}-\alpha}G^{\frac12} Pv\|_{L^2([0,1]\times\Real^d)}
+ e^{N\alpha}\gamma^{\alpha +N}\left[\|v\|_{L^2([0,1]\times\Real^d)}+\|\nabla v\|_{L^2([0,1]\times\Real^d)}\right],
\end{multline}
holds for any $v\in C_0\cap W^{1,2}_2((0,\frac 1{2\gamma})\times\Real^d)$. Here $G(t,x) = t^{-\frac{d}{2}} e^{-|x|^2/{4t}}$.
\end{lem}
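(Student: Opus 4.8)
The plan is to establish \eqref{E:5} by the Carleman estimate obtained from conjugating $P$ by the Gaussian weight together with the time reparametrisation $\sigma$, and then to produce the new term $\alpha^{\frac{d}{2(d+2)}}\|\sigma^{\frac12-\alpha}G^{\frac12}v\|_{L^{2(d+2)/d}}$ out of the other two left-hand side terms by a parabolic interpolation inequality. Throughout, denote by $\mathcal R$ the whole right-hand side of \eqref{E:5}, fix $\alpha\ge N$ and $\gamma=2\alpha/\delta^2$, and recall that $v\in C_0\cap W^{1,2}_2((0,\tfrac1{2\gamma})\times\R^d)$ is compactly supported, so all integrations by parts below are free of boundary terms.

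First I would set $f:=\sigma^{\frac12-\alpha}G^{\frac12}v$ and express $\sigma^{\frac12-\alpha}G^{\frac12}Pv$ in terms of $f$. Since $\tfrac12\boldsymbol A(0,0)$ is the identity and $\boldsymbol A$ is globally Lipschitz, on the support of $v$ (where $t\le\tfrac1{2\gamma}$ and the Gaussian factor $G$ confines the effective range of $|x|^2/t$) one has $P=\partial_t+\Delta+E$ with $E$ a first/second order error whose coefficients are $O(|x|)+O(t)$; the function $\sigma$ is the $\alpha$-dependent reparametrisation of \cite[Theorem~4]{EscauriazaFernandez}, which satisfies \eqref{barbaridad} and is chosen to absorb the time dependence of the principal part. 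Splitting the conjugated operator acting on $f$ into its symmetric part $\mathcal S$ and skew-symmetric part $\mathcal A$ in $L^2(dx\,dt)$, writing the conjugated equation as $\mathcal S f+\mathcal A f=\sigma^{\frac12-\alpha}G^{\frac12}Pv-(\text{error from }E)$, squaring and integrating, and using the positivity of the commutator $[\mathcal S,\mathcal A]$ — which controls $\alpha^2\|\sigma^{-\alpha}G^{\frac12}v\|_{L^2}^2+\alpha\|\sigma^{\frac12-\alpha}G^{\frac12}\nabla v\|_{L^2}^2$ modulo terms that are absorbed once $\delta$ is small and $\alpha\ge N$ — I would obtain both the two $L^2$ bounds
\[
\alpha\|\sigma^{-\alpha}G^{\frac12}v\|_{L^2([0,1]\times\R^d)}+\sqrt\alpha\,\|\sigma^{\frac12-\alpha}G^{\frac12}\nabla v\|_{L^2([0,1]\times\R^d)}\lesssim \mathcal R,
\]
which together with \eqref{barbaridad} are exactly \cite[Theorem~4]{EscauriazaFernandez}, and, from the same commutator identity by retaining the term that \cite{EscauriazaFernandez} discards, the energy bound $\|\sigma^{-\alpha}G^{\frac12}v\|_{L^\infty_tL^2_x([0,1]\times\R^d)}\lesssim \mathcal R$.

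Next I would feed these into the multiplicative parabolic Gagliardo--Nirenberg inequality underlying \eqref{phi-u-est2} (the form of Lemma \ref{lem:ineq}), applied to $f$:
\[
\|f\|_{L^{\frac{2(d+2)}{d}}([0,1]\times\R^d)}\lesssim \|f\|_{L^\infty_tL^2_x}^{\frac{2}{d+2}}\,\|\nabla f\|_{L^2}^{\frac{d}{d+2}}.
\]
Since $v$ is supported in $\{t\le\tfrac1{2\gamma}\}$ and $\sigma\le t$, we have $\sup_t\sigma(t)^{1/2}\le(2\gamma)^{-1/2}\lesssim\alpha^{-1/2}$, hence $\|f\|_{L^\infty_tL^2_x}=\|\sigma^{1/2}\cdot\sigma^{-\alpha}G^{\frac12}v\|_{L^\infty_tL^2_x}\lesssim\alpha^{-1/2}\,\mathcal R$ by the energy bound. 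For the gradient, from $\nabla f=\sigma^{\frac12-\alpha}G^{\frac12}\nabla v-\tfrac{x}{4t}\,\sigma^{\frac12-\alpha}G^{\frac12}v$ and the Gaussian Hardy inequality $\int_{\R^d}\tfrac{|x|^2}{t^2}G|v|^2\,dx\lesssim\int_{\R^d}G|\nabla v|^2\,dx+\tfrac1t\int_{\R^d}G|v|^2\,dx$, multiplied by $\sigma^{1-2\alpha}$, integrated in $t$, and using $t/N\le\sigma\le t$, one gets $\|\nabla f\|_{L^2}\lesssim\|\sigma^{\frac12-\alpha}G^{\frac12}\nabla v\|_{L^2}+\|\sigma^{-\alpha}G^{\frac12}v\|_{L^2}\lesssim\alpha^{-1/2}\,\mathcal R$ by the $L^2$ bounds. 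Substituting into the Gagliardo--Nirenberg inequality and counting powers,
\[
\alpha^{\frac{d}{2(d+2)}}\|f\|_{L^{\frac{2(d+2)}{d}}}\lesssim\alpha^{\frac{d}{2(d+2)}}\big(\alpha^{-\frac12}\big)^{\frac{2}{d+2}}\big(\alpha^{-\frac12}\big)^{\frac{d}{d+2}}\mathcal R=\alpha^{-\frac{1}{d+2}}\mathcal R\le\mathcal R,
\]
which, combined with the two $L^2$ bounds, gives \eqref{E:5} after enlarging $N$.

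The main obstacle is the bookkeeping in the Carleman step: one must check that retaining the $L^\infty_tL^2_x$ term costs no extra power of $\alpha$ — the positive commutator has to be used sharply, so that the energy bound comes out with right-hand side $\mathcal R$ rather than $\sqrt\alpha\,\mathcal R$ — and that the variable-coefficient error $E$, which is present because $\boldsymbol A$ is only assumed globally Lipschitz rather than constant off the origin, is genuinely absorbed by the positive terms after the $\sigma$-reparametrisation for $\delta$ small and $\alpha\ge N$. This is the step that genuinely extends \cite[Theorem~4]{EscauriazaFernandez}, and to carry it out I would follow the commutator computations there and in \cite{EscauriazaFernandezVessella}, together with the elliptic precedent in \cite{Regbaoui}. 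Once the energy bound is secured, the passage to the $L^{2(d+2)/d}$ norm is the routine interpolation above; this retained term is precisely what will allow, in Lemma \ref{T: teormea2}, the unbounded ($L^{d+2}$) part of the zero order potential $V$ to be absorbed by H\"older's inequality.
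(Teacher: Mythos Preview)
Your outline is essentially the paper's own proof: quote the $L^2$ Carleman inequality of \cite[Theorem~4]{EscauriazaFernandez}, retain the extra squared term that was dropped there to obtain an $L^\infty_tL^2_x$ bound, control $\|\nabla f\|_{L^2}$ via the Gaussian Hardy inequality, and feed both into the parabolic interpolation (Lemma~\ref{lem:ineq}(i)) to produce the $L^{2(d+2)/d}$ term with the correct power of $\alpha$.

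One small correction: the retained term and the integration-by-parts identity that extracts the energy bound naturally yield control of $\|\sigma^{\frac12-\alpha}G^{\frac12}v\|_{L^\infty_tL^2_x}=\|f\|_{L^\infty_tL^2_x}$, \emph{not} the stronger $\|\sigma^{-\alpha}G^{\frac12}v\|_{L^\infty_tL^2_x}$ you claim. Your detour through $\sigma^{1/2}\lesssim\alpha^{-1/2}$ to recover the weaker bound is therefore both unjustified and unnecessary: with $\|f\|_{L^\infty_tL^2_x}\lesssim\mathcal R$ and $\|\nabla f\|_{L^2}\lesssim\alpha^{-1/2}\mathcal R$, the multiplicative interpolation gives $\|f\|_{L^{2(d+2)/d}}\lesssim\alpha^{-d/(2(d+2))}\mathcal R$ directly, which is exactly \eqref{E:5}. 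Also, ``retaining the discarded term'' understates the work: one needs an integration-by-parts identity together with the Lipschitz bound on $\boldsymbol A$ and the Gaussian Hardy inequality to pass from that retained positive term to the $L^\infty_tL^2_x$ estimate --- precisely the bookkeeping you flag as the main obstacle.
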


\begin{proof}[Proof of Lemma \ref{L:Carlemanclasica}] As in \cite[Theorem 4 and Lemma 4]{EscauriazaFernandez}, setting $G(t,x) = t^{-\frac d2}e^{-|x|^2/4t}$ and when $\tfrac12 \mathbf A(0,0)$ is the identity matrix, the Assumptions \ref{ass:X} (i)-(iii) and \eqref{uni-ell} hold,  there are $N= N(\lambda,d)\ge 1$ and $0<\delta_0=\delta(\lambda,d)<1$ such that if $\alpha\ge 2$, $0<\delta\le\delta_0$, $\gamma =\alpha/\delta^2$,
\begin{equation}\label{Defsigma}
\sigma(t)=\beta(\gamma t)/\gamma\quad\text{and}\quad \beta(t)=t\,\text{exp}\left[-\int_0^{
t}\left(1-\text{exp}\left(-\int_0^s\tau^{-\frac12}\left(\log{\left(\tfrac 1\tau\right)}\right)^{\frac 32}\ d\tau\right)\right)\tfrac
{ds}s\right],
\end{equation}
 the Carleman inequality (where integration is carried out over $(0,+\infty)\times\Real^d$),
\begin{multline}\label{E:formulacuriosa}
\alpha\gamma\tiint\sigma^{-\gamma}|v|^2G\ \Id t\ \Id x+\gamma\tiint\sigma^{1-\gamma}|\nabla v|^2G\ \Id t\ \Id x\\ \le N \tiint\sigma^{1-\gamma}|Pv|^2G\ \Id t\ \Id x
+ e^{N\gamma}\gamma^{\gamma+N}\tiint |v|^2+|\nabla v|^2\ \Id t\ \Id x,
\end{multline}
holds for any $v\in C_0^\infty((0,\tfrac{1}{2\gamma})\times\Real^d)$. The
main point about \eqref{Defsigma} and \eqref{E:formulacuriosa} is that
\begin{equation}\label{E: acotacion sigma}
\theta\le \dot\sigma(t)\le 1\quad \text{and}\quad  \theta t\le \sigma (t)\le t,\quad 0<t\le 1/(2\gamma), 
\end{equation}
holds over the support of $v$ \cite[Lemmas 4]{EscauriazaFernandez}, for some $0<\theta<1$ which depends only on the choice of $\beta$ and is independent of $\gamma\ge1$; i.e. of $\alpha\ge 2$ and $0<\delta\le\delta_0$.

In what follows we fix the value of $\delta$ and take it equal to $\delta_0$. On the other hand, the reader can verify that the authors of \cite{EscauriazaFernandez} could  have also added the integral 
\begin{equation}\label{E:01}
\tiint\sigma^{1-\alpha}\left[\partial_tv-\tfrac 12\mathbf A(0,x)\nabla \log G\cdot\nabla
v+\tfrac 12Fv - \tfrac{\alpha\dot\sigma }{2\sigma}v\right]^2G\ \Id t\ \Id x,\ \text{where}\ F=\tfrac{2|x|^2-\mathbf A(0,x)x\cdot x}{8t^2},
\end{equation}
(left aside along the proof of \cite[Theorem 4]{EscauriazaFernandez}) to the left-hand side of \eqref{E:formulacuriosa}, while integration by parts over $(0,\tau)\times\Real^d$, $0<\tau\le 1$, shows that the following identity holds
\begin{equation}\label{E:calculo}
\begin{split}
&2\tint_0^\tau\tint_{\Real^d}\sigma^{1-\alpha}\left[\partial_tv-\tfrac 12\mathbf A(0,x)\nabla \log G\cdot\nabla
v+\tfrac 12Fv - \tfrac{\alpha\dot\sigma }{2\sigma}v\right]v\,G\ \Id t\ \Id x\\
&=\tint_{\Real^d}\sigma(\tau)^{1-\alpha}|v|^2(x,\tau)G(x,\tau)\ \Id x-\tint_0^\tau\tint_{\Real^d}\dot\sigma\,\sigma^{-\alpha}|v|^2G\ \Id t\ \Id x
\\&\quad +\tint_0^\tau\tint_{\Real^d}\sigma^{1-\alpha}\left[FG-\partial_tG+\tfrac 12\nabla\cdot\left(\mathbf A(0,x)\nabla G\right)\right]|v|^2\ \Id t\ \Id x.
\end{split}
\end{equation}
The Lipschitz continuity of $\mathbf A(0,\cdot)$ and the fact that $\tfrac 12\mathbf A(0,0)$ is the identity matrix imply that
\begin{equation}\label{E:acota}
|FG-\partial_tG+\tfrac12\nabla\cdot\left(\mathbf A(0,x)\nabla G\right)|\len \tfrac{|x|}{t}\, G
\end{equation}
while from Young's inequality and the support properties of $v$
\begin{multline}\label{E:calculo2}
\left|2\tint_0^\tau\tint_{\Real^d}\sigma^{1-\alpha}\left[\partial_tv-\tfrac 12\mathbf A(0,x)\nabla \log G\cdot\nabla
v+\tfrac 12Fv - \tfrac{\alpha\dot\sigma }{2\sigma}v\right]v\,G\ \Id t\ \Id x\right|\\
\le \tiint\sigma^{1-\alpha}\left[\partial_tv-\tfrac 12\mathbf A(0,x)\nabla \log G\cdot\nabla
v+\tfrac 12Fv - \tfrac{\alpha\dot\sigma }{2\sigma}v\right]^2G\ \Id t\ \Id x\\
+\gamma^{-1}\sup_{0<\tau<1}\tint_{\Real^d}\sigma^{1-\alpha}(\tau)|v|^2(\tau,x)G(\tau,x)\ \Id x\, . 
\end{multline}
It then follows from \eqref{E:calculo}, \eqref{E:calculo2}, \eqref{E:acota} and \eqref{E: acotacion sigma} that for $\alpha\ge N$, with a possibly larger new $N$
\begin{equation}\label{E:1}
\begin{split}
&\sup_{0<\tau<1}\tint_{\Real^d}\sigma(\tau)^{1-\alpha}|v|^2(\tau,x)G(\tau,x)\ \Id x\\&
\len \tiint\sigma^{1-\alpha}\left[\partial_tv-\mathbf A(0,x)\nabla \log G\cdot\nabla
v+\tfrac 12Fv - \tfrac{\alpha\dot\sigma }{2\sigma}v\right]^2G\ \Id t\ \Id x\\&\quad +\tiint\sigma^{-\alpha}\left(1+|x|\right)|v|^2G\ \Id t\ \Id x.
\end{split}
\end{equation}
Also, the inequality
\begin{equation}\label{E:2}
\tint\tfrac{|x|^2}{8t}|h|^2G(t,x)\ \Id x\le 2t\tint|\nabla h|^2G(t,x)\ \Id x+\tfrac d2\tint |h|^2G(t,x)\ \Id x
\end{equation}
holds for all $h$ in $C_0^\infty(\Real^d)$ and $t>0$ \cite[Lemma 3]{EscauriazaFernandezVessella}. Multiply then \eqref{E:2} applied to $v(t,\cdot)$ by $\sigma^{-\alpha}$ and integrate the corresponding inequality over $[0,1]$ to get
\begin{equation}\label{E:3}
\tiint\sigma^{-\alpha}\tfrac{|x|^2}{t}|v|^2G\ \Id t\ \Id x\len \tiint\sigma^{1-\alpha}|\nabla v|^2G\ \Id t\ \Id x+\tiint\sigma^{-\alpha}|v|^2G\ \Id t\ \Id x.
\end{equation}
By \eqref{E: acotacion sigma}, the H\"older inequality, Young's  inequality and the support properties of $v$ imply that 
\begin{equation}\label{E:otra de}
\tiint\sigma^{-\alpha}|x||v|^2G\ \Id t\ \Id x\len \tiint\sigma^{-\alpha}\tfrac{|x|^2}{t}|v|^2G\ \Id t\ \Id x+\gamma^{-1}\sup_{0<\tau<1}\tint\sigma^{1-\alpha}(\tau)|v(\tau,x)|^2G(\tau,x)\ \Id x.
\end{equation}
Then, \eqref{E:1}, \eqref{E:otra de} and \eqref{E:3} yield  
\begin{equation}\label{E:121}
\begin{split}
&\sup_{0<\tau<1}\tint_{\Real^d}\sigma(\tau)^{1-\alpha}|v|^2(\tau,x)G(\tau,x)\ \Id x\\&
\len \tiint\sigma^{1-\alpha}\left[\partial_tv-\mathbf A(0,x)\nabla \log G\cdot\nabla
v+\tfrac 12Fv - \tfrac{\alpha\dot\sigma }{2\sigma}v\right]^2G\ \Id t\ \Id x\\&\quad +\tiint\sigma^{-\alpha}|v|^2G\ \Id t\ \Id x+\tiint\sigma^{1-\alpha}|\nabla v|^2G\ \Id t\ \Id x,
\end{split}
\end{equation}
Also, the triangle inequality and \eqref{E: acotacion sigma} imply that
\begin{equation}\label{E: casocasi}
\tiint\sigma^{1-\alpha}|\nabla\left(G^{1/2}v\right)|^2\ \Id t\ \Id x\len\tiint\sigma^{1-\alpha}|\nabla v|^2G\ \Id t\ \Id x+\tiint\sigma^{-\alpha}\tfrac{|x|^2}{t}|v|^2G\,.
\end{equation}

It now follows  from \eqref{E:formulacuriosa}, \eqref{E:121}, the fact stated in \eqref{E:01}, \eqref{E: casocasi} and \eqref{E:3} that
\begin{multline}\label{E:4}
\|\sigma^{\frac{1-\alpha}2}G^{\frac 12}v\|_{L^\infty_t L^2_x([0,1]\times\Real^d)}+\sqrt{\alpha}\|\nabla(\sigma^{\frac{1-\alpha}2}G^{\frac12}v)\|_{L^2([0,1]\times\Real^d)} +\alpha\|\sigma^{-\frac\alpha 2}G^{\frac12}v\|_{L^2([0,1]\times\Real^d)}\\+\sqrt{\alpha}\|\sigma^{\frac{1-\alpha}2}G^{\frac12}\nabla v\|_{L^2([0,1]\times\Real^d)}
\le N \|\sigma^{\frac{1-\alpha}2}G^{\frac12} Pv\|_{L^2([0,1]\times\Real^d)}
\\+ e^{N\alpha}\gamma^{\frac\alpha 2 + N}\left[\|v\|_{L^2([0,1]\times\Real^d)}+\|\nabla v\|_{L^2([0,1]\times\Real^d)}\right].
\end{multline}
Then, by Lemma \ref{lem:ineq} (i) with $[a,b]=[0,1]$ and the control we have on the first and the second terms on the left-hand side of \eqref{E:4}, we get
\begin{multline}\label{E:33}
\alpha^{\frac{d}{2\left(d+2\right)}}\|\sigma^{\frac{1-\alpha}2}G^{\frac12}v\|_{L^{\frac{2\left(d+2\right)}{d}}([0,1]\times\Real^d)} +\alpha\|\sigma^{-\frac\alpha 2}G^{\frac12}v\|_{L^2([0,1]\times\Real^d)}+\sqrt{\alpha}\|\sigma^{\frac{1-\alpha}2}G^{\frac12}\nabla v\|_{L^2([0,1]\times\Real^d)}\\
\le N \|\sigma^{\frac{1-\alpha}2}G^{\frac12} Pv\|_{L^2([0,1]\times\Real^d)}
+ e^{N\alpha}\gamma^{\frac\alpha 2+N}\left[\|v\|_{L^2([0,1]\times\Real^d)}+\|\nabla v\|_{L^2([0,1]\times\Real^d)}\right],
\end{multline}
when $\alpha\ge N(\lambda,d)$ and $v\in C_0^\infty(0,\frac1{2\gamma})\times\Real^d$. 

Then, mollification and the Dominated Convergence Theorem show that \eqref{E:33} holds for functions $v\in C_0\cap W^{1,2}_2((0,\frac 1{2\gamma})\times\Real^d)$. Finally, Lemma \ref{L:Carlemanclasica} follows from \eqref{E:33} after one replaces $\alpha$ by $2\alpha$ and redefines $\gamma$ as $2\alpha/\delta^2_0$ in \eqref{E:33} .  
\end{proof}
\begin{proof}[Proof of Lemma \ref{T: teormea2}]
After the constant change of variables $x=\boldsymbol R\, y$, $\boldsymbol R =\frac1{\sqrt2}\boldsymbol A^{1/2}(0,0)$, which satisfies
\begin{equation}\label{E:conservadistancias}
N^{-1}|y|\le |x|\le N|y|,\quad x\in\Real^d,\ \text{with}\ N=N(\lambda),
\end{equation}
we may after abusing  of the notation, assume that in the original coordinates $(t,x)$, the matrix $\tfrac 12\mathbf A(0,0)$ is the identity. 

In what follows $N$ and $\gamma$ are the constants defined in Lemma \ref{L:Carlemanclasica} so that it holds for any  $\alpha\ge N$. Let now $\boldsymbol u$ satisfy the conditions in Lemma \ref{T: teormea2}, $\boldsymbol v_\epsilon=\phi_\epsilon(t)\,\theta(x)\boldsymbol u$, where $\theta\in
C_0^\infty(\Real^d)$ and $\phi_\epsilon\in C_0^\infty(\Real)$ verify $\theta=1$ for $|x|\le 1$, $\theta=0$ for $|x|\ge 2$, 
$\phi_\epsilon=1$ when $\epsilon\le t\le 1/(6\gamma)$ and $\phi_\epsilon=0$ when $t\le\epsilon/2$ or $t\ge 1/(4\gamma)$. Apply now  the inequality \eqref{E:5} to each component  $\boldsymbol v_\epsilon^i$ of $\boldsymbol v_\epsilon$. Then, after adding up in $i$, we get
\begin{multline}\label{E:25}
\alpha^{\frac{d}{2\left(d+2\right)}}\|\sigma^{\frac 12-\alpha}G^{\frac12}\boldsymbol v_\epsilon\|_{L^{\frac{2\left(d+2\right)}{d}}([0,1]\times\Real^d)} +\alpha\|\sigma^{-\alpha}G^{\frac12}\boldsymbol v_\epsilon\|_{L^2([0,1]\times\Real^d)}\\
+\sqrt{\alpha}\|\sigma^{\frac12-\alpha}G^{\frac12}\nabla \boldsymbol v_\epsilon\|_{L^2([0,1]\times\Real^d)}
\le N \|\sigma^{\frac12 -\alpha}G^{\frac12} P\boldsymbol v_\epsilon\|_{L^2([0,1]\times\Real^d)}\\
+ e^{N\alpha}\gamma^{\alpha+N}\left[\|\boldsymbol v_\epsilon\|_{L^2([0,1]\times\Real^d)}+\|\nabla \boldsymbol v_\epsilon\|_{L^2([0,1]\times\Real^d)}\right].
\end{multline}
After writing $V$ as $V_1+V_2$ with $V_1$ and $V_2$ as in \eqref{E: descomposi}, from H\"older's inequality
\begin{multline*}
\|\sigma^{\frac12 -\alpha}G^{\frac12} V\boldsymbol v_\epsilon\|_{L^2([0,1]\times\Real^d)}+ \|\sigma^{\frac12 -\alpha}G^{\frac12} W\nabla\boldsymbol v_\epsilon\|_{L^2([0,1]\times\Real^d)}\\\le
\|V_1\|_{L^{\infty}([0,1]\times\Real^d)}\|\sigma^{-\alpha}G^{\frac 12}\boldsymbol v_\epsilon\|_{L^{2}([0,1]\times\Real^d)}+\|V_2\|_{L^{d+2}([0,1]\times\Real^d)}\|\sigma^{\frac 12-\alpha}G^{\frac 12}\boldsymbol v_\epsilon\|_{L^{\frac{2\left(d+2\right)}{d}}([0,1]\times\Real^d)}\\+\|W\|_{L^{\infty}([0,1]\times\Real^d)}\|\sigma^{\frac 12-\alpha}G^{\frac 12}\nabla\boldsymbol v_\epsilon\|_{L^{2}([0,1]\times\Real^d)},
\end{multline*}
it is possible to hide on the left-hand side of \eqref{E:25} the term $W|\nabla\boldsymbol v_\epsilon|+V\mathbf |v_\epsilon |$ arising on the right-hand side of the inequality
\begin{multline*}
|P\boldsymbol v_\epsilon|\len W|\nabla\boldsymbol v_\epsilon|+V\mathbf |v_\epsilon|+|\nabla\boldsymbol u|1_{[0,\frac1{4\gamma}]\times (B_2\setminus B_1)}\\+|\boldsymbol u|\left(\alpha\, 1_{[0,\frac1{4\gamma}]\times (B_2\setminus B_1)\cup [\frac1{6\gamma},\frac1{4\gamma}]\times B_2}+\epsilon^{-1}1_{[\frac\epsilon2,\epsilon]\times B_2}\right),
\end{multline*}
after one requires $\alpha$ to be sufficiently large. Also, from \eqref{barbaridad} there is $N\ge 1$ independent of $\alpha\ge 1$ such that
\begin{equation*}
 \sigma^{-\alpha}G^{\frac12}\le e^{N\alpha}\alpha^\alpha,\quad\text{when}\  (x,t)\in [0,\tfrac1{4\gamma}]\times (B_2\setminus B_1)\cup [\tfrac1{6\gamma},\tfrac1{4\gamma}]\times B_2.
 \end{equation*} 
 Altogether, we get that for $\alpha\ge N$, for some $N$ depending only on $\lambda$ and $d$
\begin{multline*}\label{E:6}
\|t^{-\alpha}e^{-|x|^2/8t}\boldsymbol u\|_{L^2([\epsilon,\frac1{6\gamma}]\times B_1)}\le  e^{N\alpha}\alpha^{\alpha}\left[\|\boldsymbol u\|_{L^2([0,1]\times B_2)}+\|\nabla \boldsymbol u\|_{L^2([0,1]\times B_2)}\right]\\+\epsilon^{-1}e^{N\alpha}\alpha^{\alpha}\|t^{-\alpha}e^{-|x|^2/8t}\boldsymbol u\|_{L^2([\frac\epsilon 2,\epsilon]\times B_2)}.
\end{multline*}
Next, the fact that $\boldsymbol u$ has a zero of infinite order at $(0,0)$ with respect to the $(t,x)$ variable, implies that the last term above tends to zero when $\epsilon\to 0^+$, and we get
\begin{equation}\label{E:7}
\|t^{-\alpha}e^{-|x|^2/8t}\boldsymbol u\|_{L^2([0,1]\times B_1)}\le  Ne^{N\alpha}\alpha^{\alpha}\left[\|\boldsymbol u\|_{L^2([0,1]\times B_2)}+\|\nabla \boldsymbol u\|_{L^2([0,1]\times B_2)}\right],\ \text{for all}\ \alpha\ge 0.
\end{equation}
Also, from
Stirling's formula
$\alpha^\alpha\le Ne^{N\alpha}\alpha!$, for all $\alpha\in \N$ \cite{Ahlfors}. Then, after multiplying \eqref{E:7} by $1/\left(\alpha !e^{2N\alpha}2^{\alpha}\right)$ and adding up over $\alpha\ge 0$, we derive that for some large new $N$ as above
\begin{equation*}\label{E:8}
\|e^{1/Nt-|x|^2/8t}\boldsymbol u\|_{L^2([0,1]\times B_1)}\le  N\left[\|\boldsymbol u\|_{L^2([0,1]\times B_2)}+\|\nabla \boldsymbol u\|_{L^2([0,1]\times B_2)}\right].
\end{equation*}
In particular,
\begin{equation}\label{E:9}
\|e^{1/2Nt}\boldsymbol u\|_{L^2([0,1]\times B_{2/\sqrt{N}})}\le  N\left[\|\boldsymbol u\|_{L^2([0,1]\times B_2)}+\|\nabla \boldsymbol u\|_{L^2([0,1]\times B_2)}\right].
\end{equation}
Finally, the standard estimates for sub-solutions to parabolic inequalities (see  \cite[Theorem 6.17]{Lieberman}) imply that with constants depending
on $\lambda$ and $d$
\begin{equation}\label{E:10}
|\boldsymbol u(x,t)|\len \tfrac1{t^{\frac d2 +1}}\tint_t^{2t}\tint_{B_{\sqrt{t}}(x)} |\boldsymbol u|\, ds dy,\ \text{when}\ x\in\Real^d\ \text{and}\ 0\le t\le 1/4
\end{equation}
and from \eqref{E:9} and \eqref{E:10}
\begin{equation*}
|\boldsymbol u(x,t)|\le Ne^{-1/Nt}\left[\|\boldsymbol u\|_{L^2([0,1]\times B_2)}+\|\nabla \boldsymbol u\|_{L^2([0,1]\times B_2)}\right],\ \text{when}\  |x|\le 1/\sqrt{N}\ \text{and}\ 0<t<\frac 14. 
\end{equation*}
Thus, $\boldsymbol u$ vanishes to infinite order with respect to the $(t,x)$ variables at all points $(0,y)$, with $|y|<1/\sqrt{N}$. 

Because of \eqref{E:conservadistancias}, one can now repeat the same reasoning but now with center at the point $(0,y)$ with $|y| < 1/\sqrt{N}$ and find that $\boldsymbol u$ vanishes to infinite order with respect to the $(t,x)$ variables at all points $(0,y)$, with $|y|<2/\sqrt{N}$. Eventually, one derives that $\boldsymbol u(0,\cdot)\equiv 0$ over $B_{m/\sqrt{N}}$ for all $m\ge 1$, which confirms Lemma \ref{T: teormea2}.  
\end{proof}

{ After establishing the unique continuation property for $\boldsymbol{u}(0, \cdot)$ on $\Real^d$, we are going to prove the backward uniqueness for $\boldsymbol{u}$ on $[0,1]\times \Real^d$. To this end, the Carleman inequality in the following lemma is important. In particular, the time derivative term on the left-hand side of \eqref{carleman1} is needed to handle the unbounded zero order potential.}

\begin{lem}\label{lem:carleman-v}
 There is $C=C(\lambda,d)$ such that the inequality
 \begin{multline}\label{carleman1}
\norm{e^{M(t+\delta)} \partial_t f}_{L^2((0,T)\times\Real^d)}  + \sqrt{M}  \norm{e^{M(t+\delta)} \nabla f}_{L^2((0,T)\times\Real^d)}\\+ \sqrt{\alpha}\, M\,\norm{e^{M(t+\delta)} f}_{L^2((0,T)\times\Real^d)}\lec \norm{e^{M(t+\delta)} (t+\delta)^{-\alpha} P v}_{L^2((0,T)\times\Real^d)} 
 \end{multline}
holds for any $\alpha >0$, $M\geq \lambda^{-2}$, $0<\delta\le \frac{1}{4M}$ and $v\in W^{1,2}_2((0,T)\times \Real^d)$ with $v(0,\cdot)\equiv 0$ and $v(T,\cdot)\equiv 0$  over $\Real^d$, when $f = (t+\delta)^{-\alpha} v$ and $T =\tfrac{1}{4M} -2\delta$. 
\end{lem}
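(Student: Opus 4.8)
The plan is to prove Lemma~\ref{lem:carleman-v} by conjugating the operator $P$ with the full weight $e^{\phi}$, $\phi(t):=M(t+\delta)-\alpha\log(t+\delta)$, and reducing \eqref{carleman1} to a weighted energy identity. Put $w:=e^{\phi}v=e^{M(t+\delta)}f$; since $\phi$ depends on $t$ alone and $-\phi'(t)=\tfrac{\alpha}{t+\delta}-M$, a direct computation gives
\[
e^{\phi}Pv=e^{\phi}P\big(e^{-\phi}w\big)=(\partial_t-M)w+\mathcal Bw,\qquad \mathcal Bw:=\tfrac12\nabla\cdot(\boldsymbol A\nabla w)+\tfrac{\alpha}{t+\delta}\,w.
\]
Moreover $e^{M(t+\delta)}\partial_t f=(\partial_t-M)w$, $e^{M(t+\delta)}\nabla f=\nabla w$ and $e^{M(t+\delta)}f=w$, so \eqref{carleman1} is equivalent to
\[
\norm{(\partial_t-M)w}+\sqrt M\,\norm{\nabla w}+\sqrt\alpha\,M\,\norm{w}\ \lec\ \norm{(\partial_t-M)w+\mathcal Bw},
\]
all $L^2$-norms taken over $(0,T)\times\Real^d$, for $w\in W^{1,2}_2$ with $w(0,\cdot)=w(T,\cdot)\equiv0$.

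First I would square the right-hand side and integrate the cross term $2\langle(\partial_t-M)w,\mathcal Bw\rangle_{L^2}$ by parts in $t$ and in $x$. The vanishing traces $w(0,\cdot)=w(T,\cdot)\equiv0$ — which also force $\nabla w$ to vanish on these two time slices — annihilate every boundary term, and, using the symmetry of $\boldsymbol A$ together with $\partial_t\!\big(\tfrac{\alpha}{t+\delta}\big)=-\tfrac{\alpha}{(t+\delta)^2}$, one obtains
\[
\norm{(\partial_t-M)w+\mathcal Bw}^2=\norm{(\partial_t-M)w}^2+\norm{\mathcal Bw}^2+\tfrac12\tiint\nabla w\cdot(\partial_t\boldsymbol A)\nabla w+\tiint\tfrac{\alpha}{(t+\delta)^2}|w|^2+M\tiint\nabla w\cdot\boldsymbol A\nabla w-2M\tiint\tfrac{\alpha}{t+\delta}|w|^2 .
\]
Compared with the constant-coefficient heat case the two new ingredients are $\tfrac12\tiint\nabla w\cdot(\partial_t\boldsymbol A)\nabla w$, from differentiating $\boldsymbol A$ in time, and the positive term $\tiint\tfrac{\alpha}{(t+\delta)^2}|w|^2$, which is precisely the gain produced by the power weight $(t+\delta)^{-\alpha}$; retaining the full $\norm{(\partial_t-M)w}^2$ on the left (rather than discarding it) is what yields the time-derivative norm in \eqref{carleman1} used to absorb the unbounded part of the potential, a term dropped in \cite[Theorem~4]{EscauriazaFernandez}.

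The heart of the matter is to bound the last four terms of the identity from below by $M\norm{\nabla w}^2+M^2\alpha\norm{w}^2$. For the gradient part, \eqref{uni-ell} gives $M\tiint\nabla w\cdot\boldsymbol A\nabla w\ge M\lambda\tiint|\nabla w|^2$, while the global Lipschitz bound on $\boldsymbol A$ yields $\big|\tfrac12\tiint\nabla w\cdot(\partial_t\boldsymbol A)\nabla w\big|\le C(\lambda,d)\tiint|\nabla w|^2$; choosing $M$ large — which is what the hypothesis $M\ge\lambda^{-2}$ is for — absorbs the second estimate into the first and leaves $\ge\tfrac{M\lambda}{2}\tiint|\nabla w|^2$. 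For the zero-order part I would invoke the defining relation $T=\tfrac1{4M}-2\delta$: it forces $t+\delta<\tfrac1{4M}$ on $(0,T)$, hence $\tfrac1{t+\delta}>4M$, so pointwise $\tfrac{\alpha}{(t+\delta)^2}=\tfrac1{t+\delta}\cdot\tfrac{\alpha}{t+\delta}>4M\cdot\tfrac{\alpha}{t+\delta}$ and therefore
\[
\tiint\tfrac{\alpha}{(t+\delta)^2}|w|^2-2M\tiint\tfrac{\alpha}{t+\delta}|w|^2\ \ge\ 2M\tiint\tfrac{\alpha}{t+\delta}|w|^2\ \ge\ 8M^2\alpha\tiint|w|^2 .
\]
Discarding $\norm{\mathcal Bw}^2\ge0$ and combining, $\norm{(\partial_t-M)w+\mathcal Bw}^2\ge\norm{(\partial_t-M)w}^2+\tfrac{M\lambda}{2}\norm{\nabla w}^2+8M^2\alpha\norm{w}^2$; taking square roots and rewriting through $w=e^{M(t+\delta)}f$ gives \eqref{carleman1} with a constant depending only on $\lambda$ and $d$.

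All the differentiations and integrations by parts above are legitimate for $v\in C_0^\infty((0,T)\times\Real^d)$ with $v(0,\cdot)=v(T,\cdot)\equiv0$; the general statement for $v\in W^{1,2}_2((0,T)\times\Real^d)$ with those trace conditions then follows by mollification and the dominated convergence theorem, exactly as at the end of the proof of Lemma~\ref{L:Carlemanclasica} — note that on $(0,T)$ the weights $(t+\delta)^{-\alpha}$, $e^{M(t+\delta)}$ and all their $t$-derivatives are bounded, so both sides of \eqref{carleman1} are continuous in the $W^{1,2}_2$-norm. I expect the only genuinely delicate points to be the sign bookkeeping in the cross term — in particular the observation that the wrong-signed term $-2M\tiint\tfrac{\alpha}{t+\delta}|w|^2$ is controlled by $\tiint\tfrac{\alpha}{(t+\delta)^2}|w|^2$ precisely because of the constraint $T=\tfrac1{4M}-2\delta$ — and verifying that the threshold $M\ge\lambda^{-2}$ does suffice to swallow the $\partial_t\boldsymbol A$ contribution.
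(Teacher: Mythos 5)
Your argument is correct and is essentially the same route as the paper's: conjugate $P$ by the weight, expand the $L^2$-norm squared of the conjugated operator, integrate the cross term by parts in $t$ and $x$ using the vanishing traces, and use the parameter constraints to control the signs. The only difference is cosmetic: you fold the exponential weight and the power weight into a single $\phi(t)=M(t+\delta)-\alpha\log(t+\delta)$ and work with $w=e^{\phi}v$, whereas the paper keeps them separate, writing $f=e^{\sigma}v$ (with $\sigma(t)=-\alpha\log(t+\delta)$) and then multiplying by $\sqrt{\gamma}=e^{M(t+\delta)}$; the two bookkeepings produce identical energy identities. Your observation that the wrong-signed term $-2M\tiint\tfrac{\alpha}{t+\delta}|w|^2$ is dominated by $\tiint\tfrac{\alpha}{(t+\delta)^2}|w|^2$ because $t+\delta\le\tfrac{1}{4M}$ is exactly the paper's inequality \eqref{E:61} written as $\tfrac{d}{dt}(\dot\sigma\gamma)\ge\tfrac\alpha2(t+\delta)^{-2}\gamma$, and the absorption of the $\partial_t\boldsymbol A$ contribution into $M\lambda\tiint|\nabla w|^2$ via the largeness of $M$ is the paper's \eqref{E:62}. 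Retaining the full $\norm{(\partial_t-M)w}^2=\norm{e^{M(t+\delta)}\partial_t f}^2$ on the left, rather than discarding it, is the point of the lemma and you correctly keep it. The density step is also the same as the paper's (mollify compactly in space, cut off and mollify in time, use that the weights are bounded on $(0,T)$ since $t+\delta\ge\delta>0$); your cross-reference to Lemma~\ref{L:Carlemanclasica} is slightly off — the paper's Lemma~\ref{lem:carleman-v} proof contains its own mollification paragraph — but the mechanism is identical.
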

\begin{proof}
 Let $\gamma:[0,+\infty)\rightarrow \Real_+$ and $\sigma: [0,+\infty)\rightarrow \Real$ be two smooth functions to be chosen. For $v\in C^{\infty}_0((-\delta,+\infty)\times \Real^d)$ set $f=e^{\sigma(t)} v$. Then 
 \[
  e^{\sigma(t)} P v = e^{\sigma(t)} P e^{-\sigma(t)} f = \partial_t f - \dot{\sigma}(t) f + \nabla\cdot \big(\mathbf A(t,x) \nabla f\big) = \partial_t f - \mathcal{S} f,
 \]
 where $\mathcal{S} = \dot{\sigma}(t) - \nabla \cdot\left(\mathbf A(t,x) \nabla\ \right)$ is for each fixed time $t$ a symmetric operator; i.e., $\int_{\Real^d} (\mathcal{S} \phi) \psi \ \Id x = \int_{\Real^d} \phi (\mathcal{S} \psi) \ \Id x$, for any $\phi, \psi\in C^\infty_0(\Real^d)$.
 
 Then,
 \begin{align*}
&\norm{\sqrt{\gamma}\, e^{\sigma} P v}^2_{L^2((-\delta,+\infty)\times\Real^d)} = \norm{\sqrt{\gamma} e^{\sigma} P e^{-\sigma} f}^2_{L^2((-\delta,+\infty)\times\Real^d)}  = \norm{\sqrt{\gamma} (\partial_t f - \mathcal{S} f)}^2_{L^2((-\delta,+\infty)\times\Real^d)} \\
  &= \norm{\sqrt{\gamma} \partial_t f}^2_{L^2((-\delta,+\infty)\times\Real^d)} + \norm{\sqrt{\gamma} \mathcal{S} f}^2_{L^2((-\delta,+\infty)\times\Real^d)} - 2 \tiint_{(-\delta,+\infty)\times\Real^d} \gamma\partial_t f \mathcal{S} f \ \Id t \ \Id x.
 \end{align*}
 For the third term above, using integration by parts and because $\mathcal{S}$ is a symmetric operator, 
 \begin{align*}
&- 2 \tiint_{(-\delta,+\infty)\times\Real^d} \gamma \partial_t f \mathcal{S} f \ \Id t \ \Id x = -\tiint_{(-\delta,+\infty)\times\Real^d} \gamma\big[ \partial_t f \mathcal{S} f+ \partial_t f \mathcal{S}f\big]\ \Id t \ \Id x\\
& = \tiint_{(-\delta,+\infty)\times\Real^d} \big[-\partial_t\left(\gamma f \mathcal{S}f\right) +\dot{\gamma}f \mathcal{S} f + \gamma f \mathcal{S}_t f +\gamma f\mathcal S\partial_tf- \gamma f\mathcal{S} \partial_t f\big] \ \Id t \ \Id x\\
&=  \tiint_{(-\delta,+\infty)\times \Real^d} \big(\dot{\gamma} \mathcal{S}  + \gamma\mathcal{S}_t \big) f \ \Id t\ \Id x,
 \end{align*}
 where $\mathcal{S}_t = \ddot{\sigma}(t) - \nabla \cdot (\partial_t \mathbf A(t,x) \nabla)$. From the definition of $\mathcal{S}$ and $\mathcal{S}_t$, we have 
 \[
  \dot{\gamma} \mathcal{S} + \gamma \mathcal{S}_t = \tfrac{d}{dt}\big(\dot{\sigma} \gamma\big) - \nabla \cdot \Big(\big(\gamma\partial_t \mathbf A + \dot{\gamma} \mathbf A\big) \nabla\  \Big).
 \]
 Combining the previous three identities, we get 
 \begin{equation}\label{u-est1}
 \begin{split}
  \norm{\sqrt{\gamma}\, e^{\sigma} P v}^2_{L^2((-\delta,+\infty)\times\Real^d)}  = &\tiint_{(-\delta,+\infty)\times\Real^d} \gamma |\partial_t f|^2 \ \Id t \ \Id x + \tiint_{(-\delta,+\infty)\times\Real^d} \gamma |\mathcal{S} f|^2 \ \Id t \ \Id x \\
  & + \tiint_{(-\delta,+\infty)\times\Real^d} \tfrac{d}{dt} (\dot{\sigma} \gamma) |f|^2 + \big(\gamma\partial_t \mathbf A + \dot{\gamma} \mathbf A\big)\nabla f \cdot \nabla f \ \Id t \ \Id x.
 \end{split}
 \end{equation}
 
 Take now $\sigma(t) = -\alpha \log(t+\delta)$ and $\gamma(t) = e^{2M(t+\delta)}$. Then, because $\lambda^2 M \geq 1$, we have  
 \begin{equation}\label{E:61}
 \tfrac{d}{dt}(\dot{\sigma} \gamma) = \alpha (t+\delta)^{-2}e^{2M(t+\delta)} [1-2M(t+\delta)]\geq \tfrac\alpha2 (t+\delta)^{-2} e^{2M(t+\delta)},\ \text{over}\ (-\delta, T+\delta)
 \end{equation}
 and
 \begin{equation}\label{E:62}
 \big(\gamma\partial_t \mathbf A + \dot{\gamma} \mathbf A \big) \xi\cdot \xi \geq M\lambda\, e^{2M(t+\delta)} |\xi |^2,\ \text{for}\ \xi\in \Real^d,\ \text{and}\ (t,x)\in\Real^{1+d}.
 \end{equation}
 Then, \eqref{E:61} and \eqref{E:62} confirm that the inequality
  \begin{multline}\label{carleman29}
\norm{e^{M(t+\delta)} \partial_t f}_{L^2((-\delta,T+\delta)\times\Real^d)}  + \sqrt{M}  \norm{e^{M(t+\delta)} \nabla f}_{L^2((-\delta,T+\delta)\times\Real^d)}\\+ \sqrt{\alpha}\,M\norm{ e^{M(t+\delta)} f}_{L^2((-\delta,T+\delta)\times\Real^d)}\le  N\norm{e^{M(t+\delta)} (t+\delta)^{-\alpha} P v}_{L^2((-\delta,T+\delta)\times\Real^d)} 
 \end{multline}
 holds for any $v\in C^{\infty}_0((-\delta,T+\delta)\times \Real^d)$ after dropping the integral 
 \begin{equation*}
 \tiint_{(-\delta, +\infty)\times\Real^d} \gamma|\mathcal{S} f|^2 \ \Id t \ \Id x
 \end{equation*}
 from the right-hand side of \eqref{u-est1}. 
 
 Next, when $v\in W^{1,2}_2((0,T)\times \Real^d)$, the identity
 \begin{equation*}
 v(t_2,x)-v(t_1,x) = \int_{t_1}^{t_2}\partial_tv(t,x)\ \Id t
 \end{equation*} 
 and the Minkowski inequality show that
 \begin{equation}\label{eq:continuity}
 \|v(t_2,\cdot)-v(t_1,\cdot)\|_{L^2(\Real^d)} \le \sqrt{|t_2-t_1|}\,\|\partial_tv\|_{L^2((0,T)\times\Real^d))},\quad\text{for}\  t_1, t_2\in [0,T],
 \end{equation} 
 and $v\in C([0,T], L^2(\Real^d)$. Now, if $v(0,\cdot)$ and $v(T,\cdot)$ are a.e. zero over $\Real^d$, the extension of $v$ to $\Real^{1+d}$ as $v=0$ for $t\le 0$ and $t\ge T$ (abusing the notation we continue to denote  it $v$) belongs to $W^{1,2}_2(\Real^{1+d})$. 
 
 Let now $v_{\epsilon, R}=\,\theta_R\,(v\ast\phi_\epsilon)$ be a compactly supported in space mollification of $v$, where $\theta_R(x) =\theta(x/R)$ for some $\theta\in C_0^\infty(B_2)$, with $\theta =1$ over $B_1$ and $\phi\in C_0^\infty( (-1,1)\times B_1)$ is a standard mollifier with integral $1$ over $\Real^{1+d}$. Then, for $\epsilon$ small and $R>0$, $v_{\epsilon, R}$ lies in $C^{\infty}_0((-\delta/2,T+\delta)\times \Real^d)$ and $v_{\epsilon,R}$ converges in a dominated way as $R\to +\infty$ and $\epsilon\to 0^+$ to $v$ in $W^{1,2}_2(\Real^{1+d})$. Also $f_{\epsilon,R} =  (t+\delta)^{-\alpha} v_{\epsilon,R}$ converges in a dominated way to $f = (t+\delta)^{-\alpha}v$ in $W^{1,2}_2((-\delta/2,T+\delta)\times\Real^d)$. 
 
 Finally, for $\epsilon >0$ small, we can plug in $v_{\epsilon,R}$ in the inequality \eqref{carleman29}, where $t+\delta\ge \delta/2$. Then, letting first $R\to+\infty$ and after $\epsilon\to 0^+$, the dominated convergence theorem confirms that Lemma \ref{lem:carleman-v} holds.\end{proof}

\begin{lem}[Backward Uniqueness]\label{T: teormea1} If $\boldsymbol u(0, \cdot) \equiv 0$ on $\Real^d$, then $\boldsymbol u\equiv 0$ on $[0,1]\times\Real^d$.
\end{lem}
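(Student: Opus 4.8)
The plan is to combine the Carleman inequality of Lemma~\ref{lem:carleman-v} with a short–time continuation step and then iterate forward. Fix $M=M(\lambda,d)\ge\lambda^{-2}$ (to be taken large), set $\delta:=\tfrac1{16M}$ and $T:=\tfrac1{4M}-2\delta=\tfrac1{8M}$, so that on $(0,T)$ the weight $e^{M(t+\delta)}$ stays between $1$ and $2$. Choose a time cutoff $\phi\in C^\infty(\R)$ with $\phi\equiv1$ on $[0,T/2]$, $\phi\equiv0$ on $[3T/4,\infty)$, $0\le\phi\le1$ and $|\dot\phi|\lec M$. Since $\boldsymbol u\in W^{1,2}_2((0,1)\times\R^d)$ and $\boldsymbol u(0,\cdot)\equiv0$, the function $v:=\phi\boldsymbol u$ satisfies, componentwise, $v\in W^{1,2}_2((0,T)\times\R^d)$ with $v(0,\cdot)\equiv v(T,\cdot)\equiv0$, so Lemma~\ref{lem:carleman-v} applies. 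Applying \eqref{carleman1} to each $v^j$, summing over $j$, and writing $f=(t+\delta)^{-\alpha}v$, the identity $Pv^j=\phi\,Pu^j+\dot\phi\,u^j$ together with \eqref{u-diff-bd} and the facts $|\nabla f|=(t+\delta)^{-\alpha}\phi|\nabla\boldsymbol u|$, $|f|=(t+\delta)^{-\alpha}\phi|\boldsymbol u|$ bounds the right-hand side of \eqref{carleman1}, up to $C(\lambda,d)$, by $\|e^{M(\cdot+\delta)}(W|\nabla f|+V|f|)\|_{L^2((0,T)\times\R^d)}+\|e^{M(\cdot+\delta)}(\cdot+\delta)^{-\alpha}|\dot\phi|\,|\boldsymbol u|\|_{L^2((0,T)\times\R^d)}$.

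The crux, and the step I expect to be the main obstacle, is to absorb the potential part $\|e^{M(\cdot+\delta)}(W|\nabla f|+V|f|)\|_{L^2}$ into the left-hand side of \eqref{carleman1}. Writing $V=V_1+V_2$ with $\|V_1\|_{L^\infty}+\|V_2\|_{L^{d+2}}\le2\lambda^{-1}$ and using $\|W\|_{L^\infty}\le\lambda^{-1}$ from \eqref{VW-int}, the $W$- and $V_1$-contributions are $\le\lambda^{-1}\|e^{M(\cdot+\delta)}\nabla f\|_{L^2}$ and $\le2\lambda^{-1}\|e^{M(\cdot+\delta)}f\|_{L^2}$, which the $\sqrt M\,\|e^{M(\cdot+\delta)}\nabla f\|_{L^2}$ and $\sqrt\alpha\,M\,\|e^{M(\cdot+\delta)}f\|_{L^2}$ terms on the left absorb once $M$, then $\alpha$, are large. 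For the unbounded piece $V_2\in L^{d+2}$, H\"older's inequality produces $2\lambda^{-1}\|e^{M(\cdot+\delta)}f\|_{L^{2(d+2)/d}((0,T)\times\R^d)}$; here I would invoke the parabolic Gagliardo--Nirenberg inequality (Lemma~\ref{lem:ineq} (i)) together with $\|(e^{M(\cdot+\delta)}f)(t,\cdot)\|_{L^2}\le\sqrt t\,\|\partial_t(e^{M(\cdot+\delta)}f)\|_{L^2((0,T)\times\R^d)}$, which is legitimate because $f(0,\cdot)\equiv0$, to dominate this term by $C(\lambda,d)\big(\sqrt T\,(M\|e^{M(\cdot+\delta)}f\|_{L^2}+\|e^{M(\cdot+\delta)}\partial_t f\|_{L^2})+\|e^{M(\cdot+\delta)}\nabla f\|_{L^2}\big)$. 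Since $\sqrt T\,M=\sqrt{M/8}$ and $\sqrt T=1/\sqrt{8M}$, taking $M=M(\lambda,d)$ large makes the $\partial_t f$ and $\nabla f$ summands absorbable by the $\|e^{M(\cdot+\delta)}\partial_t f\|_{L^2}$ and $\sqrt M\,\|e^{M(\cdot+\delta)}\nabla f\|_{L^2}$ terms on the left, while the $f$ summand is absorbed by $\sqrt\alpha\,M\,\|e^{M(\cdot+\delta)}f\|_{L^2}$ once $\alpha$ is large. This use of the $\partial_t f$ term on the left of \eqref{carleman1} — absent in the classical Carleman estimate — is precisely what makes the $L^{d+2}$ potential manageable.

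After these absorptions one is left, for all $\alpha\ge\alpha_0(\lambda,d)$, with $\sqrt\alpha\,M\,\|(\cdot+\delta)^{-\alpha}\phi\boldsymbol u\|_{L^2((0,T)\times\R^d)}\lec\|(\cdot+\delta)^{-\alpha}|\dot\phi|\,|\boldsymbol u|\|_{L^2((0,T)\times\R^d)}$, the constant depending only on $\lambda,d$. Bounding the left side below by restricting to $[0,T/4]$, where $\phi\equiv1$ and $(t+\delta)^{-\alpha}\ge(T/4+\delta)^{-\alpha}$, and the right side above using that $\dot\phi$ is supported in $(T/2,3T/4)$ where $(t+\delta)^{-\alpha}\le(T/2+\delta)^{-\alpha}$ and $|\dot\phi|\lec M$, one obtains $\|\boldsymbol u\|_{L^2((0,T/4)\times\R^d)}\lec\alpha^{-1/2}\big((T/4+\delta)/(T/2+\delta)\big)^{\alpha}\|\boldsymbol u\|_{L^2((0,1)\times\R^d)}$. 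Since $(T/4+\delta)/(T/2+\delta)\in(0,1)$, letting $\alpha\to\infty$ forces $\boldsymbol u\equiv0$ on $[0,T_*]\times\R^d$, where $T_*:=T/4=1/(32M)$ depends only on $\lambda$ and $d$.

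Finally, because $\partial_t\boldsymbol u\in L^2((0,1)\times\R^d)$ we have $\boldsymbol u\in C([0,1];L^2(\R^d))$ (cf.\ the argument around \eqref{eq:continuity}), so $\boldsymbol u(T_*,\cdot)\equiv0$. For any $t_0\in[0,1)$ the translated data $\big(\boldsymbol u(\cdot+t_0,\cdot),\,\boldsymbol A(\cdot+t_0,\cdot),\,V(\cdot+t_0,\cdot),\,W(\cdot+t_0,\cdot)\big)$ satisfies all the hypotheses of the three paragraphs above on $[0,1-t_0)\times\R^d$ with the same $\lambda$, hence with the same $T_*$; applying the argument successively at $t_0=T_*,2T_*,\dots$ — each time using that $\boldsymbol u$ vanishes on $[0,kT_*]$, so $\boldsymbol u(kT_*,\cdot)\equiv0$ — yields $\boldsymbol u\equiv0$ on $[0,1]\times\R^d$ after $\lceil1/T_*\rceil$ steps.
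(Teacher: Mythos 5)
Your argument is correct and follows essentially the same route as the paper: apply the weighted Carleman inequality of Lemma~\ref{lem:carleman-v} to $\phi\boldsymbol u$, handle the bounded parts $W$ and $V_1$ of the potential by the $\sqrt{M}\|\nabla f\|$ and $\sqrt{\alpha}\,M\|f\|$ terms, and handle the $L^{d+2}$ piece $V_2$ by H\"older followed by the parabolic Gagliardo--Nirenberg inequality (Lemma~\ref{lem:ineq}), absorbing the resulting $\|\partial_t f\|_{L^2}$ contribution by the time-derivative term on the left of \eqref{carleman1}, then let $\alpha\to\infty$ and iterate forward using the $\lambda$-uniformity of the step size $T_*$. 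The only (cosmetic) differences from the paper are the fixed choice $\delta=\tfrac1{16M}$ rather than $\delta\to0^+$, the specific cutoff supports, and the use of the smallness of $\sqrt T$ in place of the paper's $\epsilon$-weighted Young inequality in \eqref{Vf-holder}; both implementations rest on exactly the same absorption mechanism.
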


\begin{proof}
 For $\alpha\ge 1$, $M>0$ and $0<\delta\le \frac{1}{40M}$ small, let $T =\tfrac{1}{4M}-2\delta\ge \tfrac1{5M}$ be as in Lemma \ref{lem:carleman-v}. Consider a smooth truncation function $\phi:[0,1]\rightarrow \Real$ such that $0\leq \phi \leq 1$, $\phi(t) =1$ when $0\leq t\leq \tfrac{1}{8M}$ and $\phi(t) =0$ when $t\geq \tfrac{1}{6M}$. Set $\boldsymbol v(t,x) = \phi(t)\boldsymbol u(t,x)$. We first apply Lemma \ref{lem:carleman-v} to each component $v_i$, $i =1,\dots,I$, of $\boldsymbol v$, which lies in $W^{1,2}_2([0,T]\times \Real^d)$, $v(0,\cdot)\equiv 0$ and $v(T,\cdot)\equiv 0$  over $\Real^d$. To this end, we have
 \begin{equation}\label{Pv}
  Pv_i = \phi\, P u_i + \dot{\phi}\,u_i=: \text{I} + \text{II}.
 \end{equation}
 For II, by the construction of $\phi$, $|\dot{\phi}|\lec M$ and $\dot{\phi}\neq 0$ only when $t$ lies in the interval $[\tfrac1{8M},\tfrac{1}{6M}]$. Therefore, for $0< t+\delta \leq T+\delta \leq \tfrac{1}{4M}$, we have
 \begin{equation}\label{Pv-estII}
  \big|e^{M(t+\delta)}(t+\delta)^{-\alpha} \dot{\phi}\, \theta_R u_i\big| \lec (8M)^{1+\alpha} |\boldsymbol u|.
 \end{equation}
 For I and due to \eqref{u-diff-bd}
 \begin{equation}\label{Pv-estI}
  e^{M(t+\delta)}(t+\delta)^{-\alpha} \phi\, \big|P u_i\big| \lec   W \big|(t+\delta)^{-\alpha} \phi \,\nabla\boldsymbol u| +  V \Big|(t+\delta)^{-\alpha} \phi\, \boldsymbol u\Big|\lec W |\nabla\boldsymbol f| + V|\boldsymbol f|
 \end{equation}
 where $\boldsymbol f = (t+\delta)^{-\alpha} \phi\,  \boldsymbol u$. Applying Lemma \ref{lem:carleman-v} to each $v_i$, $i=1,\dots, d$ and adding up over $i$ on both sides of \eqref{carleman1}, we obtain from \eqref{Pv}-\eqref{Pv-estI} and \eqref{E: descomposi} that the following holds
 \begin{equation}\label{long-est}
 \begin{split}
  & \norm{\partial_t \boldsymbol f}_{L^2((0,T)\times\Real^d)} + \sqrt{\alpha}\,M\norm{\boldsymbol f}_{L^2((0,T)\times\Real^d)} + \sqrt{M} \norm{\nabla \boldsymbol f}_{L^2((0,T)\times\Real^d)}\\
  &\lec (8M)^{1+\alpha} \norm{\boldsymbol u}_{L^2((0,1)\times\Real^d)} + \norm{\nabla \boldsymbol f}_{L^2((0,T)\times\Real^d)}+  \norm{\boldsymbol f}_{L^2((0,T)\times\Real^d)}
+\norm{V_2\boldsymbol f}_{L^2((0,T)\times\Real^d)}.
 \end{split}
 \end{equation}

For the last term on the right-hand side above, H\"{o}lder's inequality and \eqref{E: descomposi} yield
\begin{equation*}
 \norm{V_2\boldsymbol f}_{L^2((0,T)\times\Real^d)} \le \|V_2\|_{L^{d+2}((0,T)\times\Real^d)}\norm{\boldsymbol f}_{L^{\frac{2(d+2)}{d}}((0,T)\times\Real^d)}\lec \norm{\boldsymbol f}_{L^{\frac{2(d+2)}{d}}((0,T)\times\Real^d)} .
\end{equation*}
Applying Lemma \ref{lem:ineq} (i) and (ii) to the right-hand side of the previous inequality, we get 
\begin{equation}\label{Vf-holder}
\begin{split}
  &\norm{\boldsymbol f}_{L^{\frac{2(d+2)}{d}}((0,T)\times\Real^d)} \lec  \norm{\nabla \boldsymbol f}^{\frac{d}{d+2}}_{L^2((0,T)\times\Real^d)} \norm{\partial_t \boldsymbol f}^{\frac{2}{d+2}}_{L^2((0,T)\times\Real^d)}\\
  &\lec  \epsilon^{-\frac{d+2}{d}}  \norm{\nabla \boldsymbol f}_{L^2((0,T)\times\Real^d)} +  \epsilon^{\frac{d+2}{2}} \norm{\partial_t \boldsymbol f}_{L^2((0,T)\times\Real^d)},
 \end{split}
\end{equation}
for any $\epsilon>0$. Then,  the last three terms on the right-hand side of \eqref{long-est} are bounded by 
\begin{equation}\label{WVf-ub}
 \le C \left[\norm{\boldsymbol f}_{L^2((0,T)\times\Real^d))} +  \left(1 +\epsilon^{-\frac{d+2}{d}}\right) \norm{\nabla \boldsymbol f}_{L^2(\Rt)}+ \epsilon^{\frac{d+2}{d}} \norm{\partial_t \boldsymbol f}_{L^2((0,T)\times\Real^d))}\right],
 \end{equation}
 with $C=C(\lambda, d)$.
Choose then $\epsilon$ sufficiently small so that $C \epsilon^{\tfrac{d+2}{d}} \leq 1$ Then, choose $M$ sufficiently large such that 
\[
\sqrt{M} \geq 2C \left( 1+ \epsilon^{-\frac{d+2}{d}}\right).
\]
Then \eqref{WVf-ub} shows that the last three terms on the left-hand side of \eqref{long-est} 
are dominated by the left-hand side of \eqref{long-est}, if we choose and fix a sufficiently  large value of $M$.

As a result, we have from \eqref{long-est} that for that value of $M$, $\alpha \ge 1$, $0<\delta\le\tfrac1{40M}$ and with $T= \tfrac1{4M}-2\delta\ge \frac1{5M}$, we have
 \begin{equation}\label{long-est2}
\sqrt{\alpha}\norm{(t+\delta)^{-1-\alpha}\phi\,\boldsymbol u}_{L^2((0,\tfrac1{5M})\times\Real^d)} \leq   C(8M)^{(1+\alpha)} \norm{\boldsymbol u}_{L^2(\Rt)}.
  \end{equation}
 Letting then $\delta $ tend to zero and recalling that $\phi\equiv 1$, when $t\le \tfrac 1{16M}$, where $t^{-1-\alpha}\ge (16M)^{1+\alpha}$, we get from \eqref{long-est2}
  \begin{equation*}
   \norm{\boldsymbol u}_{L^2([0,\tfrac{1}{16M}]\times \Real^d)} \leq C\, 2^{-\alpha}\norm{ \boldsymbol u}_{L^2([0,1]\times \Real^d)},\quad\text{when}\ \alpha\ge 1.
  \end{equation*}
  Sending $\alpha\rightarrow \infty$ we conclude that $\norm{ \boldsymbol u}_{L^2([0,\tfrac{1}{16M}]\times \Real^d)}=0$. Now iterating the same reasoning over the time interval $[\tfrac{1}{16M},1)$, as many times as it is necessary, we derive $\boldsymbol u\equiv 0$ over $[0,1)\times\Real^d$. To reach the terminal time $t=1$, applying the estimate \eqref{eq:continuity} to $\boldsymbol u$, we obtain $\boldsymbol u \in C([0,1], L^2(\R^d))$. Therefore $\boldsymbol u(1, \cdot) \equiv 0$ and we confirm the statement of Lemma \ref{T: teormea1}.
 \end{proof}
 
 \begin{lem}\label{lem:ineq} There is a constant $C$ depending on $d\ge 1$ such that the following inequalities hold for any interval  $[a,b]$ in $\Real$.
\begin{enumerate}
\item[(i)] $\norm{f}_{L^{\frac{2(d+2)}{d}}([a,b]\times \Real^d)}\lec \norm{\nabla f}^{\frac{d}{d+2}}_{L^2([a,b]\times \Real^d)} \|f\|^{\frac{2}{d+2}}_{L^\infty_t L^2_x([a,b]\times \Real^d)}$. 
\item[(ii)] $\norm{f}_{L_t^\infty L_x^2([a,b]\times \Real^d)} \leq \sqrt{b-a}\,\norm{\partial_t f}_{L^2([a,b]\times \Real^d)}$, when $f(a,\cdot)\equiv 0$ over $\Real^d$. 
\end{enumerate}
\end{lem}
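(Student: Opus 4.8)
The plan is to prove the two inequalities separately; each follows from \textquotedblleft slicing in time\textquotedblright\ and combining a standard one--slice spatial estimate with an integration in~$t$.

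For (ii), I would use the fundamental theorem of calculus in the time variable. Since $f(a,\cdot)\equiv 0$, for a.e.\ $x\in\Real^d$ and every $t\in[a,b]$ one has $f(t,x)=\int_a^t \partial_t f(s,x)\,\Id s$, so that by the Cauchy--Schwarz inequality in $s$,
\[
|f(t,x)|^2 \le (t-a)\int_a^t |\partial_t f(s,x)|^2\,\Id s \le (b-a)\int_a^b |\partial_t f(s,x)|^2\,\Id s .
\]
Integrating in $x$ over $\Real^d$ and then taking the essential supremum over $t\in[a,b]$ yields $\norm{f}_{L_t^\infty L_x^2([a,b]\times\Real^d)}^2\le (b-a)\,\norm{\partial_t f}_{L^2([a,b]\times\Real^d)}^2$, which is (ii). The only technical point is the justification of the integral representation and of the ensuing continuity $f\in C([a,b],L^2(\Real^d))$; this is exactly the argument already used to obtain \eqref{eq:continuity}, and for $f$ merely in $W^{1,2}_2$ it is handled by a routine mollification/density argument, so I would simply invoke it.

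For (i), the key input is the scaling--invariant Gagliardo--Nirenberg--Sobolev inequality on $\Real^d$ (the classical Ladyzhenskaya inequality when $d=2$): there is $C=C(d)$ such that, for a.e.\ fixed $t$,
\[
\norm{f(t,\cdot)}_{L^{\frac{2(d+2)}{d}}(\Real^d)} \lec \norm{\nabla f(t,\cdot)}_{L^2(\Real^d)}^{\frac{d}{d+2}}\,\norm{f(t,\cdot)}_{L^2(\Real^d)}^{\frac{2}{d+2}},
\]
where the exponent $\theta=\tfrac{d}{d+2}$ is forced by the relation $\tfrac{d}{2(d+2)}=\tfrac12-\tfrac{\theta}{d}$ between the Lebesgue exponents. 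Then I would raise both sides to the power $q:=\tfrac{2(d+2)}{d}$, integrate in $t$ over $[a,b]$, and estimate $\norm{f(t,\cdot)}_{L^2}\le \norm{f}_{L^\infty_t L^2_x}$ under the integral; since $\theta q=2$ and $(1-\theta)q=\tfrac4d$, this leaves
\[
\norm{f}_{L^{q}([a,b]\times\Real^d)}^{q} \lec \norm{f}_{L^\infty_t L^2_x([a,b]\times\Real^d)}^{\frac4d}\int_a^b \norm{\nabla f(t,\cdot)}_{L^2(\Real^d)}^2\,\Id t = \norm{f}_{L^\infty_t L^2_x([a,b]\times\Real^d)}^{\frac4d}\,\norm{\nabla f}_{L^2([a,b]\times\Real^d)}^2 ,
\]
and taking the $q$-th root, together with $\tfrac2q=\tfrac{d}{d+2}$ and $\tfrac4{dq}=\tfrac2{d+2}$, gives precisely (i).

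I do not expect a genuine obstacle: both parts are soft. The only points requiring mild care are the measurability in $t$ of the slice norms $\norm{\nabla f(t,\cdot)}_{L^2}$ and $\norm{f(t,\cdot)}_{L^2}$, which follows from Fubini's theorem, and the fact that the Gagliardo--Nirenberg constant depends only on $d$ (and not on $[a,b]$), which is the whole point of using its dimensionally homogeneous form; when $f$ is only assumed in $W^{1,2}_2$ the one--slice inequality is obtained by approximating $f(t,\cdot)$ in $H^1(\Real^d)$ by smooth functions. These I would dispatch in one or two lines.
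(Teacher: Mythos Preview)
Your proof is correct and follows essentially the same approach as the paper: a slice-in-time Gagliardo--Nirenberg inequality followed by integration in $t$ for (i), and the fundamental theorem of calculus plus Cauchy--Schwarz for (ii). The only cosmetic difference is that the paper derives the slice inequality in (i) explicitly via fractional integration (writing $f=\mathcal I_\alpha\Lambda_\alpha f$ and invoking the $L^2\to L^r$ bound for Riesz potentials with $\alpha=\tfrac{d}{d+2}$), whereas you simply quote the Gagliardo--Nirenberg--Sobolev inequality; the subsequent raise-to-the-$q$-th-power and time integration, as well as all of part (ii), are identical.
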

\begin{proof} For $\alpha\in (0,1)$, let $\Lambda_\alpha$ be the fractional differential operator and $\mathcal{I}_\alpha$ the fractional integral operator. Then $f= \mathcal{I}_{\alpha} \Lambda_\alpha f$, $\widehat{\Lambda_\alpha f} = |\xi|^\alpha \hat{f}$, and $\widehat{\mathcal{I}_{\alpha} g} = |\xi|^{-\alpha} \hat{g}$, where $\hat{\cdot}$ is the Fourier transform. Young's inequality implies that for any $\epsilon>0$ and $\xi$ in $\Real^d$, $|\xi|^{2\alpha} \leq \epsilon^{2\alpha} + \epsilon^{2\alpha -2}|\xi|^2$. Then 
\[
\norm{\Lambda_\alpha f(t, \cdot)}_{L^2(\Real^d)} \leq \epsilon^\alpha \norm{f(t,\cdot)}_{L^2(\Real^d)} + \epsilon^{\alpha-1} \norm{\nabla f(t, \cdot)}_{L^2(\Real^d)}.
\]
Minimizing with respect to $\epsilon >0$ the above right-hand side, we obtain 
\begin{equation}\label{frac-diff}
\norm{\Lambda_\alpha f(t, \cdot)}_{L^2(\Real^d)} \leq \norm{\nabla f(t, \cdot)}^\alpha_{L^2(\Real^d)} \norm{f(t,\cdot)}^{1-\alpha}_{L^2(\Real^d)}.
\end{equation}
On the other hand, when $\frac12 -\frac1r = \frac{\alpha}{d}$ and $0 < \alpha < d$, $\mathcal{I}_{\alpha}$ maps $L^2(\Real^2)$ into $L^r(\Real^2)$ \cite[p. 119]{Stein70}, i.e., there exists a constant $C$ depending on $r$ such that 
\begin{equation}\label{I-embed}
\norm{\mathcal{I}_{\alpha} g(t, \cdot)}_{L^r(\Real^d)} \lec \norm{g(t,\cdot)}_{L^2(\Real^d)}.
\end{equation}
Combining \eqref{frac-diff},\eqref{I-embed}, together with $f= \mathcal{I}_{\alpha} \Lambda_\alpha f$ and choosing $r=\frac{2(d+2)}{d}$, $\alpha =\frac d{d+2}$, we obtain 
\[
\norm{f(t,\cdot)}_{L^{\frac{2(d+2)}d}(\Real^d)} \lec \norm{\nabla f(t,\cdot)}^{\frac d{d+2}}_{L^2(\Real^d)} \norm{f(t, \cdot)}^{\frac2{d+2}}_{L^2(\Real^d)}.
\]
Taking the $\frac{2(d+2)}{d}$ power of both sides and integrating with respect to time over $[a,b]$ we confirm the statement in (i).
	
By the Fundamental Theorem of Calculus $f(t,x) = \int_a^t \partial_t f(s, x) ds$, when $f(a,x)\equiv 0$ and
\begin{equation*}
|f(t,x)| \leq \big(\tint_a^t |\partial_t f(s,x)|^2 ds\big)^{\frac12} \sqrt{t-a},
\end{equation*}
which implies 
\begin{equation*}
\norm{f}_{L_t^\infty L_x^2([a,b]\times\Real^d)} \leq \sqrt{b-a}\norm{\partial_t f}_{L^2([a,b]\times\Real^d)}.
\end{equation*}
\end{proof}

\begin{proof}[Proof of Lemma \ref{lem:Bu-sys}]
 It follows from the discussion at the beginning of Section \ref{sec:limit_soln} that $\Bu\in L^\infty ((0,1)\times \Real^d)$ and from Proposition \ref{prop:global-sobolev} that $\Bu, \nabla \Bu \in  L^2 ((0,1) \times \R^d)$. 
 
 Taking the derivative with respect to $x^n$ on both sides of the equation satisfied by $v^0$ in \eqref{v-pde-sys}, we obtain
 \begin{equation}\label{u-sys}
 \begin{split}
  &\partial_t u^n + \tfrac12 \tsum_{j,k} \partial_{x^j} \big(A^{jk} \, \partial_{x^k} u^n\big) \\
  & = \tfrac12 \tsum_{j,k} \partial_{x^j} A^{jk} \, \partial_{x^k} u^n - \tfrac12 \tsum_{j,k} \partial_{x^n} A^{jk} \, \partial_{x^j} u^k - \tsum_j b^j \partial_{x^j} u^n - \tsum_j \partial_{x^n} b^j \, u^j\\ 
  & \quad - {\tsum_{j,k}} \frac{\partial f^0}{\partial z^{0j}} \,\sigma^{kj} \,\partial_{x^k} u^n - {\tsum_{j,k}} \frac{\partial f^0}{\partial z^{0j}} \, \partial_{x^n}\sigma^{kj} \, u^k \\
  & \quad - {\tsum_{i\neq 0, j,k}} \frac{\partial f^0}{\partial z^{ij}} \, \sigma^{kj} \partial^2_{x^n x^k} v^i - {\tsum_{i\neq 0, j,k}} \frac{\partial f^0}{\partial z^{ij}} \, \partial_{x^n}\sigma^{kj} \, \partial_{x^k} v^i.
 \end{split}
 \end{equation}
 Recall from Assumption \ref{ass:X} (i), $\boldsymbol b, \boldsymbol \sigma, \nabla \boldsymbol b, \nabla \boldsymbol \sigma \in L^\infty$.  Moreover, it follows from Assumption \ref{ass:BSDE} (iv) that 
 \[
  \Big|\frac{\partial f^0}{\partial z^{0j}} (\nabla \boldsymbol v \, \boldsymbol \sigma) \Big| \lec |\nabla \boldsymbol v| \quad \text{and} \quad \Big|\frac{\partial f^0}{\partial z^{ij}} (\nabla \boldsymbol v \, \boldsymbol \sigma)\Big| \lec |\Bu|, \quad i=1, \dots, I,\ j = 1, \dots, d,
 \]
 for a constant $C$ depending on $M$ in Assumption \ref{ass:BSDE} (iv) and $\|\boldsymbol \sigma\|_{L^\infty}$. Thanks to Corollary \ref{cor:vnm-reg} (ii) and Proposition \ref{prop:global-sobolev}, $\nabla \boldsymbol v \in L^\infty \cap L^{d+2}\cap L^2$ and $\nabla^2 \boldsymbol v \in L^{d+2}\cap L^2$, we confirm \eqref{VW-int} and \eqref{u-diff-bd} from \eqref{u-sys} and we further obtain $V\in (L^\infty +L^{d+2})((0,1)\times \R^d)$, { where the $L^{d+2}$ component comes from the first term in the last line of \eqref{u-sys}}. Now due to the fact that $W, \Bu \in L^\infty$ and $\Bu, \nabla \Bu,  \nabla^2 \boldsymbol v \in L^2$, the right-hand side of \eqref{u-sys} belongs to $L^2$. Then the Sobolev norm estimate for linear parabolic equations (see e.g. \cite[Chapter IV, Theorem 9.1]{oLadyzhenskaya1968})) implies that $\Bu\in W^{1,2}_2((0,1)\times \R^d)$. 
\end{proof}

\section{Additional proofs}\label{sec:add-proofs}
\subsection{Proof of Lemma \ref{Lem:port}}

We fix $i\in \{1,\ldots,I\}$. The finiteness of $\E[|U(\theta^i \cdot S + E^i)|]$ follows from the assumption in Definition \ref{def:radner_equilibrium} that the set $\mathcal{Q}$ is well defined. To prove the optimality, define the conjugate function of $U$ by
\[
V(y):= \sup_{x\in\R} \left\{ U(x) - xy \right\} = y(\ln y -1), \qquad y>0,
\]
and observe that
\[
V(U'(x)) = U(x) - xU'(x) \geq U(c) - cU'(x), \qquad c\in \R.
\]
In particular, letting $a^i:=\E[U'(\theta^i \cdot S + E^i)]$, we find that
\begin{multline*}
U\Big( \tint_0^1 \theta^i_t\ \Id S_t + E^i \Big) - a^i \frac{\Id \mathbb{Q}^i}{\Id \mathbb{P}} \Big( \int_0^1 \theta^i_t\ \Id S_t + E^i \Big)\geq  U\Big( \tint_0^1 \eta_t\ \Id S_t + E^i \Big) - a^i \frac{\Id \mathbb{Q}^i}{\Id \mathbb{P}} \Big( \tint_0^1 \eta_t\ \Id S_t + E^i \Big),
\end{multline*}
for all processes $\eta$. Taking the expectation under $\mathbb{P}$ on both sides of the inequality yields the result due to the fact that $\eta \cdot S$ is a supermartingale under $\mathbb{Q}^i$ . \qed

\subsection{Proof of Theorem \ref{Thm:char}}
Let $(\boldsymbol \theta,S)$ be a Radner equilibrium, $\mathcal{Q} = \{ \mathbb{Q}^i \}_{i=1,\ldots,I}$ the set of associated pricing measures and $\boldsymbol R$ the certainty equivalent process defined by \eqref{eq:cert_equiv}. For $i=1,\ldots,I$, we define the martingales
\begin{align*}
K^i_t &:= \E_t \Big[ \xi U'\Big( \tint_0^1 \theta^i_u\ \Id S_u + E^i  \Big) \Big] = \E_t \Big[ \xi e^{-\int_0^1 \theta^i_u\ \Id S_u - E^i} \Big],\\
L^i_t &:= \E_t \Big[U'\Big( \tint_0^1 \theta^i_u\ \Id S_u + E^i  \Big) \Big] = \E_t \Big[e^{-\int_0^1 \theta^i_u\ \Id S_u - E^i} \Big].
\end{align*}
Since $S$ is a martingale under every element of $\mathcal{Q}$ and from the representation $\Id \mathbb{Q}^i / \Id \mathbb{P} = L^i_1/L^i_0$, we obtain that
\begin{align*}
S_t &= \E_t^{\mathbb{Q}^i}[\xi] = \frac{K^i_t}{L^i_t},\\
R^i_t&= -\ln L^i_t - \int_0^t \theta^i_u\ \Id \left( \frac{K^i_u}{L^i_u} \right),
\end{align*}
which we may write as
\begin{equation}\label{eq:form_fwd}
\begin{aligned}
S_t &= S_0 + \int_0^t \Id \left( \frac{K^i_u}{L^i_u} \right), \\
R^i_t&= R^i_0 - \int_0^t \left( \Id \ln L^i_u + \theta^i_u\ \Id \left( \frac{K^i_u}{L^i_u} \right) \right).
\end{aligned}
\end{equation}

Because the filtration is generated by the Brownian motion $W$, therefore, any local martingale can be represented as a stochastic integral with respect to $W$. This and the fact that $L>0$ allows us to deduce that there exist processes $\boldsymbol \beta,\boldsymbol \eta \in \mathcal{H}^0(\mathbb{R}^{I\times d})$ such that the martingales $K$ and $L$ have the representation
\begin{align*}
K^i_t &= K^i_0 + \int_0^t L^i_u\boldsymbol \beta^i_u\ \Id W_u,\\
L^i_t &= L^i_0 + \int_0^t L^i_u\boldsymbol \eta^i_u\ \Id W_u.
\end{align*}
A simple application of It\^{o}'s formula now yields
\begin{align*}
\Id \left(\frac{K^i_t}{L^i_t}\right) &= -(\boldsymbol \beta^i_t - S_t \boldsymbol \eta^i_t) \boldsymbol (\eta^i_t)^\top\ \Id t + (\boldsymbol \beta^i_t - S_t \boldsymbol \eta^i_t)\ \Id W_t,\\
\Id \ln L^i_t &= -\frac{1}{2} |\boldsymbol \eta^i_t|^2\ \Id t+ \boldsymbol \eta^i_t\ \Id W_t
\end{align*}
and therefore the SDE
\[
S_t = S_0 - \int_0^t (\boldsymbol \beta^i_u - S_u\boldsymbol \eta^i_u) \boldsymbol (\eta^i_u)^\top\ \Id u + \int_0^t (\boldsymbol \beta^i_u - S_u\boldsymbol \eta^i_u)\ \Id W_u.
\]
Because $S$ solves each of the $I$ previous SDEs, there exists a row vector $\boldsymbol \zeta\in\mathcal{H}^0(\mathbb{R}^{d})$ and $\mu\in\mathcal{H}^0(\mathbb{R})$ such that, for every $i\in\{1,\ldots,I \}$,
\begin{align*}
\boldsymbol \zeta_t &= \boldsymbol \beta^i_t - S_t \boldsymbol \eta^i_t,\\
\mu_t &= - \boldsymbol \zeta_t(\boldsymbol \gamma^i_t + \theta^i_t\boldsymbol \zeta_t)^\top,
\end{align*}
where $-\boldsymbol\gamma^i := \boldsymbol \eta^i + \theta^i\boldsymbol \zeta$. Taking into account the terminal conditions $S_1 = \xi$ and $R^i_1 = E^i$, we may write \eqref{eq:form_fwd} in backward form: 
\begin{align*}
S_t &= \xi + \int_t^1 \mu_u\ \Id u - \int_t^1 \boldsymbol \zeta_u\ \Id W_u,\\
R^i_t &= E^i - \frac{1}{2}\int_t^1 |\boldsymbol \gamma^i_u|^2 - |\theta^i_u\boldsymbol \zeta_u|^2\ \Id u - \int_t^1 \boldsymbol \gamma^i_u\ \Id W_u.
\end{align*}
From the market clearing condition $\sum_i \alpha^i \theta^i = 1$ we deduce that
\begin{align*}
\mu_t &= - (\tsum_k\alpha^k \boldsymbol \gamma^k_t + \boldsymbol \zeta_t)\boldsymbol \zeta_t^\top,\\
\theta^i_t &= 1 + (\sum_k \alpha^k\boldsymbol \gamma^k_t - \boldsymbol \gamma^i_t ) \frac{\boldsymbol \zeta_t^\top}{|\boldsymbol \zeta_t |^2}, \quad \text{if } \boldsymbol \zeta_t\neq 0.
\end{align*}
A simple substitution now yields the BSDE formulation in the statement of the theorem.

It remains to consider the stochastic exponentials $\mathcal{Z}^i = \mathcal{E}(-(\boldsymbol \gamma^i + \theta^i\boldsymbol \zeta) \cdot W)$. Observe that
\begin{align*}
\mathcal{Z}^i_t & = e^{ -\frac{1}{2}\int_0^t |\boldsymbol \gamma^i_u + \theta^i_u\boldsymbol \zeta_u |^2\ \Id u - \int_0^t \boldsymbol \gamma^i_u + \theta^i_u\boldsymbol \zeta_u\ \Id W_u}\\
&=e^{-\frac{1}{2}\int_0^t |\boldsymbol \eta^i_u |^2\ \Id u + \int_0^t \boldsymbol \eta^i_u\ \Id W_u }\\
&=\frac{L^i_t}{L^i_0} = \E_t\left[ \frac{\Id \mathbb{Q}^i}{\Id \mathbb{P}} \right].
\end{align*}
Hence each $\mathcal{Z}^i$ is a $\mathbb{P}$-martingale. Since $S$ and $\theta^i\cdot S$ are martingales under $\mathbb{Q}^i$, it now follows that $\mathcal{Z}^iS$ and $\mathcal{Z}^i(\theta^i \cdot S)$ are martingales under $\mathbb{P}$.

Suppose $(S,\boldsymbol R,\boldsymbol \zeta,\boldsymbol \gamma)$ is a solution of the BSDE stated in the theorem. Let $\boldsymbol \theta$ be defined as in \eqref{eq:strat_equ} if $\boldsymbol \zeta \neq 0$, and take arbitrary values satisfying \eqref{eq:clearing} if $\boldsymbol \zeta =0$, and let $\mathcal{Z}^i = \mathcal{E}(-(\boldsymbol \gamma^i + \theta^i\boldsymbol \zeta)\cdot W)$ be the density process of a probability measure $\mathbb{Q}^i$. Then
\begin{align*}
\frac{\Id \mathbb{Q}^i}{\Id \mathbb{P}} = Z^i_1 &= e^{ -\frac{1}{2}\int_0^1 |\boldsymbol \gamma^i_t + \theta^i_t\boldsymbol \zeta_t |^2\ \Id t - \int_0^1 \boldsymbol \gamma^i_t + \theta^i_t\boldsymbol \zeta_t\ \Id W_t  }\\
&= e^{ -R^i_1 + R^i_0 - \int_0^1 \theta^i_t\ \Id S_t }\\
&= \frac{U'\left( \int_0^1 \theta^i_t\ \Id S_t  + E^i\right) }{\mathbb{E}\left[U'\left( \int_0^1 \theta^i_t\ \Id S_t  + E^i\right)\right]},
\end{align*}
where we deduced $e^{-R^i_0} = \E[e^{-R^i_1 - \theta^i \cdot S}]$ from the fact that $\E[\mathcal{Z}^i_1] = 1$. Hence $\boldsymbol{\mathcal{Z}}=(\mathcal{Z}^i)_{i=1,\ldots,I}$ defines the elements of the set of pricing measures $\mathcal{Q}$. We further observe that $S_1 = \xi$ and that, for every $i\in\{1,\ldots,I \}$, since $\mathcal{Z}^iS$ and $\mathcal{Z}^i(\theta^i\cdot S)$ are martingales under $\mathbb{P}$ and the density process $\mathcal{Z}^i$ is a martingale, it follows that $S$ and $\theta^i\cdot S$ are $\mathbb{Q}^i$-martingales. Given the expression \eqref{eq:strat_equ} for each $\theta^i$, one easily verifies that also the clearing condition \eqref{eq:clearing} holds. Lastly, undoing the computations in the `only if' part of the proof shows that $\boldsymbol R$ is indeed the certainty equivalent defined by \eqref{eq:cert_equiv}. \qed

\subsection{Proof of Theorem \ref{thm:equilibrium}}

We will check  that an invertible linear transformation of $\boldsymbol f$ in \eqref{eq:f}  satisfies Assumption \ref{ass:BSDE}.  Once this is done,  Theorem \ref{thm:BSDE-existence} implies the existence of a Markovian solution $\widetilde{\boldsymbol Y}_t = \widetilde{\boldsymbol v}(t, X_t)$ and $\widetilde{\boldsymbol Z}_t = (\nabla \widetilde{\boldsymbol v}  \boldsymbol \sigma)(t, X_t)$, $t\in [0,1]$ with $\widetilde{\boldsymbol Y}$ and $\widetilde{\boldsymbol Z}$ both bounded for the BSDE with the transformed generator $\widetilde{\boldsymbol f}$. Invert the linear transformation, same properties hold for $(\boldsymbol Y, \boldsymbol Z)$. In notation of \eqref{BSDE-sys}, $\boldsymbol \zeta = \boldsymbol Z^0$ and $\boldsymbol \gamma^i = \boldsymbol Z^i$, $i=1, \dots, I$. Recall $\theta^i$ from \eqref{eq:strat_equ},  we obtain
\begin{equation}\label{the-zeta}
\theta^i \boldsymbol \zeta = \boldsymbol \zeta + \tsum_k \alpha^k \boldsymbol \gamma^k - \boldsymbol \gamma^i.
\end{equation} 
Boundedness of $\boldsymbol \zeta$ and $\boldsymbol \gamma^i$ imply  that  $\theta^i \boldsymbol \zeta$ is bounded no matter whether $\boldsymbol \zeta =0$ or not. Therefore the stochastic exponential $\mathcal{Z}^i = \mathcal{E} (- (\boldsymbol \gamma^i + \theta^i \boldsymbol \zeta) \cdot W)$ defines a probability measure $\mathbb{Q}^i$ via $d\mathbb{Q}^i/ d\mathbb{P} |_{\mathcal{F}_1} = \mathcal{Z}^i_1$. It then follows from the first equation of \eqref{BSDE-sys} and \eqref{the-zeta} that 
\[
 \Id S_t = \boldsymbol \zeta_t \ \Id W^{i}_t, 
\]
where $dW^{i}_t = dW_t + (\sum_k \alpha^k \boldsymbol \gamma^k_t + \boldsymbol \zeta_t) \Id t$ defines a $\mathbb{Q}^i$-Brownian motion $W^i$. Thanks to the boundedness of $\boldsymbol \zeta$, $S$ is a $\mathbb{Q}^i$-martingale. Moreover,
\[
 \theta^i \ \Id S_t = \theta^i \boldsymbol \zeta_t \ \Id W^{i}_t.
\]
Therefore boundedness of $\theta^i \boldsymbol \zeta$ implies the $\mathbb{Q}^i$-martingale property of $\theta^i \cdot S$. Now, the `only if' statement of Theorem \ref{Thm:char} implies that $(\boldsymbol \theta, S)$ is a Radner equilibrium. 

Come back to the linear transformation of $\boldsymbol f$. Introduce $\widetilde{\boldsymbol f}$ whose components are   
\[
 \widetilde{f}^0(\widetilde{\boldsymbol z}) = f^0(\boldsymbol z), \quad \widetilde{f}^i(\widetilde{\boldsymbol z})  = (f^i - f^I) (\boldsymbol z), \ i=1, \dots, I-1, \quad \widetilde{f}^I(\widetilde{\boldsymbol z}) = \widetilde{f}^I(\boldsymbol z),
\]
\[
 \widetilde{\boldsymbol z}^0 = \boldsymbol z^0, \quad  \widetilde{\boldsymbol z}^i = \boldsymbol z^i - \boldsymbol z^I, \ i=1,\dots, I-1, \quad \widetilde{\boldsymbol z}^I = \boldsymbol z^I.
\]
Assumption \ref{ass:BSDE} (i) is clearly satisfied. To check (ii), observe that 
\begin{align*}
 \widetilde{f}^0(\widetilde{\boldsymbol z}) = & \widetilde{\boldsymbol z}^0 \ell^0(\widetilde{\boldsymbol z}),\\
  \widetilde{f}^i(\widetilde{\boldsymbol z}) = & f^i(\boldsymbol z) - f^I(\boldsymbol z) = \widetilde{\boldsymbol z}^i \ell^i(\widetilde{\boldsymbol z}) - \tfrac12 \widetilde{\boldsymbol z}^i (\widetilde{\boldsymbol z}^i + 2 \widetilde{\boldsymbol z}^I), \quad i=1, \dots, I-1,
\end{align*}
where 
\begin{align*}
\ell^0(\widetilde{\boldsymbol z}) = & - \big(\tsum_k \alpha^k (\widetilde{\boldsymbol z}^k + \widetilde{\boldsymbol z}^I) + \widetilde{\boldsymbol z}^0\big)^\top,\\
\ell^i(\widetilde{\boldsymbol z}) = & -\tfrac12 \tfrac{(\widetilde{\boldsymbol z}^0)^\top}{|\widetilde{\boldsymbol z}^0|} \big(2 \widetilde{\boldsymbol z}^0 +  2 \tsum_k \alpha^k (\widetilde{\boldsymbol z}^k +  \widetilde{\boldsymbol z}^I) - \widetilde{\boldsymbol z}^i - 2 \widetilde{\boldsymbol z}^I\big) \tfrac{(\widetilde{\boldsymbol z}^0)^\top}{|\widetilde{\boldsymbol z}^0|} 1_{\{|\widetilde{\boldsymbol z}^0 \neq 0|\}}, \quad i=1, \dots, I-1,
\end{align*}
are both at most linear growth function of $\widetilde{\boldsymbol z}$. Meanwhile $\widetilde{f}^I(\widetilde{\boldsymbol z})$ is at most quadratic growth function of $\widetilde{\boldsymbol z}$. Therefore \eqref{BF} is satisfied by $\widetilde{\boldsymbol f}$. To verify (iii), note that 
\begin{align*}
 - f^0(\boldsymbol z) = &-\boldsymbol z^0  \ell^0(\boldsymbol z),\\
 -f^i(\boldsymbol z) \leq & \tfrac12 |\boldsymbol z^i|^2, \quad i=1, \dots, I, 
\end{align*}
Moverover 
\begin{align*}
 &f^0(\boldsymbol z) + \tsum_i \alpha^i f^i(\boldsymbol z) \\
 &=-  \tfrac12 \frac{(f^0(\boldsymbol z))^2}{|\boldsymbol z^0|^2} 1_{\{\boldsymbol z^0 \neq 0\}} +  \frac{(f^0(\boldsymbol z))^2}{|\boldsymbol z^0|^2} 1_{\{\boldsymbol z^0 \neq 0\}}   + \frac{f^0(\boldsymbol z)}{|\boldsymbol z^0|^2} \big(\tsum_i \alpha^i \boldsymbol z^i \big) (\boldsymbol z^0)^\top 1_{\{\boldsymbol z^0 \neq 0\}}+ f^0(\boldsymbol z) \\
 & \quad + \tfrac12 \tsum_i \alpha^i \Big(\frac{\boldsymbol z^i (\boldsymbol z^0)^\top}{|\boldsymbol z^0|}\Big)^2 1_{\{\boldsymbol z^0 \neq 0\}} - \tfrac12 \tsum_i \alpha^i |\boldsymbol z^i|^2\\
 & \leq \text{I} + \text{II},
\end{align*}
where, due to $f^0(\boldsymbol z) =0$ when $\boldsymbol z^0 =0$, 
\begin{align*}
 \text{I} &= \frac{(f^0(\boldsymbol z))^2}{|\boldsymbol z^0|^2} 1_{\{\boldsymbol z^0 \neq 0\}}   + \frac{f^0(\boldsymbol z)}{|\boldsymbol z^0|^2} \big(\tsum_i \alpha^i \boldsymbol z^i \big) (\boldsymbol z^0)^\top 1_{\{\boldsymbol z^0 \neq 0\}}+ f^0(\boldsymbol z) = - f^0(\boldsymbol z) 1_{\{\boldsymbol z^0 \neq 0\}} + f^0(\boldsymbol z) =0,\\
 \text{II} &=  \tfrac12 \tsum_i \alpha^i \Big(\frac{\boldsymbol z^i (\boldsymbol z^0)^\top}{|\boldsymbol z^0|}\Big)^2 1_{\{\boldsymbol z^0 \neq 0\}} - \tfrac12 \tsum_i \alpha^i |\boldsymbol z^i|^2 \leq 0.
\end{align*}
The two estimates above combined implies $f^0(\boldsymbol z) + \sum_i \alpha^i f^i(\boldsymbol z)\leq 0$. Therefore the previous estimates imply that $\boldsymbol f$ satisfies \eqref{wAB} with a positively spanning vectors in $\R^{I+1}$:
\[
 \boldsymbol a_i = - e_i, \,i=1, \dots, I+1, \quad \text{and} \quad \boldsymbol a_{I+2} = (1, \alpha^1, \dots, \alpha^I),
\]
where $(e_i)$ are the standard basis vectors of $\R^I$. Recall that the positively spanning property is remains after invertible linear transformation (see. e.g. \cite[Remark 2.13]{hXing2018}). Therefore $\widetilde{\boldsymbol f}$ satisfies \eqref{wAB} as well. Finally, from the specific form of $f^0(\boldsymbol z)$, we can verify Assumption \ref{ass:BSDE} (iv) as well. In conclusion, $\widetilde{\boldsymbol f}$ satisfies all conditions in Assumption \ref{ass:BSDE}. \qed

\subsection{Proof of Theorem \ref{thm:BSDE-existence}}
 Corollary \ref{cor:vnm-reg} (ii) shows that the H\"{o}lder and $L^\infty$-norms of $\nabla \boldsymbol v_n$ are bounded uniformly in $n$. Then by Arel\'{a}-Ascoli theorem, $\nabla \boldsymbol v_n$ converges to $\nabla \boldsymbol v$ local uniformly. On the other hand, Lemma \ref{lem:Bu-sys} and Theorem \ref{thm:BU} imply that,  if there is a measurable set in $[0,1)\times \mathbb{R}^d$ with positive Lebesgue measure such that $|\nabla v^0|=0$ there, then $\nabla v^0 \equiv 0$ on $[0,1]\times \mathbb{R}^d$. However, this is contradicts with $|\nabla g^0(x_0)|\neq 0$ in Assumption \ref{ass:X} (iv). Therefore, we confirm \eqref{non-deg}. As a result, the convergence \eqref{f-conv} follows from the local uniform convergence of $\nabla \boldsymbol v_n$ to $\nabla \boldsymbol v$ and the local uniform convergence of $\boldsymbol f_n$ to $\boldsymbol f$ in Lemma \ref{lem:reg} (iii). We have seen from Corollary \ref{cor:vnm-reg} (iii) that each $\boldsymbol v_n$ satisfies \eqref{vnm-sys}. Sending $n\rightarrow \infty$, we obtain that $\boldsymbol v$ solves \eqref{v-pde-sys} in weak sense, hence almost everywhere. It follows Corollary \ref{cor:vnm-reg} (ii) and Proposition \ref{prop:global-sobolev} that $\boldsymbol v\in L^\infty \cap W^{1,2}_2 \cap W^{1,2}_{d+2} ((0,1)\times \mathbb{R}^d)$ and $\nabla \boldsymbol v \in L^\infty ((0,1)\times \mathbb{R}^d)$. Finally, $(\boldsymbol Y_t, \boldsymbol Z_t)= (\boldsymbol v, \nabla \boldsymbol v \boldsymbol \sigma)(t,X_t)$ solves the BSDE \eqref{eq:P} thanks to Krylov's It\^{o} formula in \cite[Chapter 2, Section 10, Theorem 1]{nKrylov1980}. \qed
 
 {
 \section{Future research} \label{sec:future}
 This paper invites investigations in several challenging future research topics: First, due to the terminal consumption nature of our setting, the interest rate is not endogenously determined. When agents also consume intertemporally and receive flow of random endowments, the interest rate needs to be determined endogenously. The case of linear stock dividend and random endowment is considered in  \cite{Lar12}, \cite{Lar14}, where the equilibrium interest rate is deterministic. Going beyond the linear setting requires studying stochastic interest rate, which is challenging in a setting with CARA utility agents. Second, when there are multiple risky assets, their volatility $\boldsymbol \zeta$ becomes a matrix, rather than a vector in the single risky asset case. The mean-variance component in agents' optimal position requires $\boldsymbol \zeta \boldsymbol \zeta'$ to be invertible. This is the same research question as in the endogenously complete dynamic equilibria. However, rather than a system of linear PDEs, incomplete equilibrium demands analyzing a system of highly nonlinear PDE, rendering the non-degeneracy property of $\boldsymbol \zeta \boldsymbol \zeta'$ highly non-trivial to understand. Lastly, we hope our self-contained backward uniqueness result Theorem \ref{thm:BU} could provide researcher a tool to tackle other BSDEs or control problems where the degeneracy of $Z$ or the control variable naturally appear.
}
\bibliographystyle{alpha}
\bibliography{bib-existence}       


\end{document}